\newcommand{\supp}{\operatorname{supp}}
\newcommand{\pardist}{\omega}
\newtheorem{theorem}{Theorem}[section]
\newtheorem{corollary}[theorem]{Corollary}
\newtheorem{prop}[theorem]{Proposition}
\newtheorem{lemma}[theorem]{Lemma}
\theoremstyle{definition}
\newtheorem{definition}[theorem]{Definition}
\newtheorem{notation}[theorem]{Notation}
\theoremstyle{remark}
\newtheorem{remark}[theorem]{Remark}
\let\c@equation\c@theorem
\numberwithin{equation}{section}
\newcommand{\thistheoremname}{}
\newtheorem*{genericthm*}{\thistheoremname}
\newenvironment{namedthm*}[1]
{\renewcommand{\thistheoremname}{#1}%
	\begin{genericthm*}}
	{\end{genericthm*}}
\title{Characterising 1-Rectifiable metric spaces via Connected Tangent Spaces}
\author{David Bate}
\email{david.bate@warwick.ac.uk}
\author{Phoebe Valentine}
\email{phoebe.valentine@warwick.ac.uk}
\address{Zeeman Building, University of Warwick, Coventry CV4 7AL}
\thanks{
David Bate is supported by the European Union's Horizon 2020 research and innovation programme (Grant agreement No. 948021).	
Phoebe Valentine is supported by the Warwick Mathematics Institute Centre for Doctoral Training, and gratefully acknowledges funding from the University of Warwick and the UK Engineering and Physical Sciences Research Council (Grant number: EP/W524645/1)}
\begin{document}	
	\begin{abstract}
		We prove that in a complete metric space $X$, $1$-rectifiability of a set $E\subset X$ with $\mathcal{H}^1(E)<\infty$ and positive lower density $\mathcal{H}^1$-a.e.\ is implied by the property that all tangent spaces are connected metric spaces.
	\end{abstract}
	\maketitle
	\setcounter{tocdepth}{1} %depth 1 means just sections, depth 2 means subsections etc
	
	\section{Introduction}
	A central concept in geometric measure theory is that of rectifiability. For a metric space $(X,d)$, a set $E\subset X$ is \emph{$n$-rectifiable} if there exists countably many Lipschitz maps $f_i \colon A_i\subset \mathbb{R}^n\rightarrow X$ such that
\[\mathcal{H}^n(E\backslash \bigcup_{i\in \mathbb{N}}f_i(A_i))=0.\]
Here and throughout, $\mathcal{H}^n$ denotes the $n$-dimensional Hausdorff (outer) measure. A set is \emph{purely $n$-unrectifiable} if its intersection with any $n$-rectifiable set has $\mathcal{H}^n$ measure zero.

This paper concerns characterizations of rectifiable metric spaces in terms of tangent spaces.
In classical geometric measure theory, that is, for subsets of Euclidean space, Preiss \cite{paper:preiss} established a theory of tangent \emph{measures}.
Further, the rectifiability criterion of Marstrand \cite{paper:marstrand} and Mattila \cite{paper:mattila} shows that a set $E\subset \mathbb{R}^m$ (with $\mathcal H^n(E)<\infty$ and positive lower density almost everywhere) is $n$-rectifiable if, for $\mathcal{H}^n$-a.e. $x\in E$, all tangent measures of $\mathcal{H}^n|_E$ at $x$ are supported on $n$-dimensional subspace of $\mathbb{R}^m$.

In the setting of metric spaces, the work of Kirchheim \cite{paper:kirchheim} shows that rectifiable sets possess a strong tangent structure.
In particular, this work implies that tangent spaces (defined as a suitable Gromov--Hausdorff type adaptation of tangent measures, see Section \ref{prelims}) of a $n$-rectifiable set are Lebesgue measure supported on a normed $\mathbb R^n$.
It remained an open question for sometime whether a result analogous to the Marstrand--Mattila theorem holds in metric spaces.
Recently, this was shown to be the case.
Indeed, Bate \cite{bate} shows that a metric space $E$ (with $\mathcal H^n$) is $n$-rectifiable whenever, for $\mathcal{H}^n$-a.e. $x\in E$, all tangent spaces of $\mathcal{H}^n|_E$ at $x$ are supported on biLipschitz images of $\mathbb{R}^n$.

In this article we show that, for 1-dimensional sets, this sufficient condition for rectifiability can be weakened further to tangents that are supported on \emph{connected} metric spaces.

Let $\mathcal{C}^*$ be the space of pointed metric measure spaces $(X,d,\mu,x)$ for which the support of $\mu$ is connected.

\begin{theorem}
	\label{lovelytheorem}
	Let $(X,d)$ be a complete metric space and let $E\subset X$ be $\mathcal{H}^1$-measurable with $\mathcal{H}^1(E)<\infty$.
	The following are equivalent:
	\begin{enumerate}
		\item $E$ is $1$-rectifiable.
		\item For $\mathcal{H}^1$-a.e. $x\in E$, $\Theta_*^1(E,x)>0$ and
		\[\textnormal{Tan}(X,d,\mathcal{H}^1\llcorner E,x)=
		\{(\mathbb R,|\cdot|,\mathcal{H}^1/2,0)\}.\]
		\item For $\mathcal{H}^1$-a.e. $x\in E$, $\Theta_*^1(E,x)>0$ and
		\[\textnormal{Tan}(X,d,\mathcal{H}^1\llcorner E,x)\subset \mathcal{C}^*.\]
	\end{enumerate}
\end{theorem}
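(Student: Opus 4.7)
The implication $(2)\Rightarrow(3)$ is immediate, since $(\mathbb{R}, |\cdot|, \mathcal{H}^1/2, 0)$ has connected support and thus lies in $\mathcal{C}_*$. For $(1)\Rightarrow(2)$ I would appeal to Kirchheim's infinitesimal structure theory for rectifiable subsets of a metric space: at $\mathcal{H}^1$-a.e.\ point of any $1$-rectifiable set the $1$-density equals $1$ (in particular $\Theta^1_*(E,x)>0$), and the unique tangent is a copy of the real line carrying the normalised Hausdorff measure $\mathcal{H}^1/2$, the factor $1/2$ being forced by $\mathcal{H}^1([-r,r])=2r$ and the standard blow-up normalisation.

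The substantive content is $(3)\Rightarrow(1)$, and I plan to derive it from Bate's theorem, which asserts that $E$ is $1$-rectifiable as soon as, at $\mathcal{H}^1$-a.e.\ $x\in E$, every tangent is supported on a biLipschitz image of $\mathbb{R}$. The task thus reduces to a purely infinitesimal statement: if $(Y,d,\mu,0)\in\mathrm{Tan}(X,d,\mathcal{H}^1\llcorner E,x)$ arises at a point $x$ with $\mathcal{H}^1(E)<\infty$ and $\Theta_*^1(E,x)>0$, and $\supp\mu$ is connected, then $\supp\mu$ is biLipschitz equivalent to $\mathbb{R}$.

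The first step is to record the coarse geometry of $Y$. The upper mass bound $\mu(B(0,r))\lesssim r$ follows from $\mathcal{H}^1(E)<\infty$ via standard rescaling estimates, and the matching lower bound $\mu(B(0,r))\gtrsim r$ is a consequence of the positive lower density assumption. In particular $\mathcal{H}^1(\supp\mu\cap B(0,r))<\infty$ for every $r$, so $\supp\mu$ is a connected metric space of locally finite $\mathcal{H}^1$ measure. Classical results of Eilenberg--Harrold type then guarantee that such a space is locally arcwise connected and that each bounded subcontinuum is a Lipschitz image of a compact interval, providing the desired parameterisation from an interval, but not yet the biLipschitz one.

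The main obstacle is to promote this Lipschitz parameterisation to a biLipschitz one, equivalently to exclude branch points and multiply covered arcs in $\supp\mu$. The natural tool is the self-similarity of tangents: for $\mu$-a.e.\ $y\in\supp\mu$ the tangents of the rebased space $(Y,d,\mu,y)$ can themselves be realised as tangents of $(X,d,\mathcal{H}^1\llcorner E,\cdot)$ at some density point near $x$, and hence by hypothesis (3) again have connected support. A genuine branch point of $\supp\mu$ would then produce, via a further blow-up, a tangent supported on a nontrivial tripod or star, upon which the linear mass bounds $\mu(B(0,r))\asymp r$ must still hold. A careful mass count on such a branching configuration exceeds the linear upper bound, yielding the required contradiction. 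Making this estimate quantitative without recourse to an ambient linear structure in which to project is the technical heart of the argument; once accomplished, each $\supp\mu$ is an arc and Bate's criterion delivers the full rectifiability conclusion.
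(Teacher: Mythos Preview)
Your handling of $(1)\Rightarrow(2)$ and $(2)\Rightarrow(3)$ matches the paper. The difficulty is entirely in $(3)\Rightarrow(1)$, and there your strategy diverges sharply from the paper and contains a real gap.

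You propose to reduce to Bate's biLipschitz criterion by showing that every connected tangent $(Y,\mu,0)$ is in fact biLipschitz to $\mathbb{R}$. The Eilenberg--Harrold step is fine: $\supp\mu$ is a locally finite-length continuum, hence locally arcwise connected. The problem is the branch-point exclusion. Your ``careful mass count'' cannot work as stated, because the Ahlfors regularity constant available for tangents is $\Theta^{*1}(E,x)/\Theta^1_*(E,x)$ (see Lemma~\ref{lemma:ahlforsregular}), and while $\Theta^{*1}\leq 1$ almost everywhere, $\Theta^1_*$ is only assumed positive, so this ratio can be arbitrarily large. A tripod with $\mathcal{H}^1$ measure, suitably normalised, is a perfectly good connected $C$-Ahlfors $1$-regular space once $C\geq 3$, and no amount of iterated blow-up at the branch point changes this: the tripod is self-similar there. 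Blowing up at points along an arm yields $\mathbb{R}$, which is connected, so the tangents-of-tangents mechanism produces no contradiction with hypothesis~(3). In short, there is no purely local mass obstruction to branching, and you have not supplied any other mechanism.

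The paper does \emph{not} attempt to show that individual tangents are biLipschitz to $\mathbb{R}$; the authors explicitly remark that even when all tangents are supported on $\mathbb{R}$ their argument is independent of Bate's. Instead they argue globally: from connectedness of all tangents they extract a uniform quasi-path-connectedness (Corollary~\ref{decomposemebaby}), and then show (Proposition~\ref{lemma:besicovtichpartitions}) that any purely $1$-unrectifiable set admits, at arbitrarily small scales, a large family of \emph{Besicovitch partitions}---pairs of nearby points separated by a gap in $F$ of definite relative size. A quasi-path-connected tangent is forced to place mass in that gap, contradicting the partition. This is a genuinely different idea from anything in your proposal, and it is precisely what circumvents the obstruction above: it never needs to rule out branching in a single tangent.
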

The fact that 1) implies 2) is given in \cite[Theorem 6.6]{bate} by applying the results of Kirchheim \cite{paper:kirchheim}.
Since 2) implies 3) is immediate, our attention is focused on the implication 3) implies 1).
We emphasise that, even in the case that all tangent spaces are supported on $\mathbb R$, our proof follows a completely different approach than that of \cite{bate}.

Indeed, the geometric idea of our proof begins with a result of Besicovitch \cite[Lemma 9]{paper:besi2} that covers a purely 1-unrectifiable subset of $\mathbb R^2$ by a large number of \emph{circle pairs}.
A circle pair of a set $F$ is simply $B(x,|x-y|)\cup B(y,|x-y|)$, where $x$ and $y$ are points in $F$.
Besicovitch demonstrates that for a $1$-unrectifiable set $F\subset \mathbb R^2$ and any $r>0$, one may find disjoint circle pairs, whose radii are at most $r$ and sum to at least $\mathcal H^1(F)/9$, such that the common regions are disjoint from $F$:
\[F\cap B(x,|x-y|)\cap B(y,|x-y|)=\emptyset.\]
This construction can be used to show that $1$-unrectifiable sets in $\mathbb{R}^2$ (with finite Hausdorff measure and positive lower density) cannot have tangent measures that are supported on lines:
any such tangent is forced to be close to both $x$ and $y$ and hence must cross the empty region, thereby contradicting the idea that a tangent must stay close to the set it is approximating. 

There are two issues in extending this idea to prove Theorem \ref{lovelytheorem}.
Firstly, his proof only works in $\mathbb{R}^2$, in particular it uses the fact that boundaries of balls are continua. Secondly, when considering tangents supported on connected sets (even in $\mathbb{R}^2$), Besicovitch's circle pairs are no longer sufficient: a \emph{connected} tangent can lie close to both $x$ and $y$ and wander outside $B(x,|x-y|)\cup B(y,|x-y|)$ and thereby avoid the common region.

To solve this, we strengthen Besicovitch's notion of circle pairs to what we term Besicovitch \emph{partitions}, see Definition \ref{def:besipart}. Roughly speaking, these partition $F$ into two closed sets, and we consider points $x,y$ that minimise the distance between the components.
In Theorem \ref{lemma:besicovtichpartitions} we show that in any complete metric space, for any compact purely $1$-unrectifiable set $F\subset X$ and $R,r>0$, there exist Besicovitch partitions with $\sum d(x_i,y_i) \sim \mathcal H^1(F)$ such that $d(x_i,y_i)<r$ and moreover the balls $B(x_i,Rd(x_i,y_i))$ (for different $i$) are pairwise disjoint.
On the other hand, in Corollary \ref{decomposemebaby}, we show that if all tangents of a space are connected, then in fact there exists a set of large measure and an $R>0$ such that all tangents to points in this set are connected in a ball of radius $R$.
Thus, we again arrive to the desired contradiction.

To conclude the introduction we mention the related work of Preiss and Ti{\v{s}}er \cite{zbMATH00120128} that gives a different adaptation of Besicovitch's circle pair argument (to metric spaces) in order to make progress on the Besicovitch one-half conjecture.
These results, and the progress towards the conjecture, were recently extended further by De Lellis, Glaudo, Massaccesi and Vittone \cite{2404.17536}. Further, our result can be viewed as a substantial strengthening of a theorem of Azzam and Mourgoglou \cite{azzam}, which states that a doubling metric measure space with positive 1-density and connected support is 1-rectifiable.

The structure of the paper is as follows.
After recalling definitions and fixing some notation in \S \ref{prelims}, we present the definition of a metric tangent and recall various properties from \cite{bate}. Theory that is specific to connected tangents is developed in \S \ref{connectedtangents}.

In \S \ref{besicovitchpartitions} we prove our strengthened version of Besicovitch's circle pair result. Finally, in Theorem \ref{ultimate} in \S \ref{results}, we combine all of our ideas and perform the contradiction argument described above.
This gives the final piece required to prove Theorem \ref{lovelytheorem}.

	\section{Preliminaries}\label{prelims}
	In all the following, let $(X,d)$ be a metric space. For $E\subset X$ and $a\in X$, let
\[\textnormal{dist}(a,E):=\inf\{(a,x):x\in E\}.\]
Set
\[\textnormal{diam}(E):=\sup\{d(x,y):x,y\in E\}.\]
We use $B(x,r)$ to denote the closed ball centred at $x$ and of radius $r>0$ and $U(x,r)$ to denote the corresponding open ball.
For $\varepsilon>0$, we will denote the $\varepsilon$-neighbourhood of $E$ via an underlined subscript
\[E_{\boldsymbol{\underline\varepsilon}}:=\{x\in X:\textnormal{dist}(x,E)\leq \varepsilon\}.\]
For two sets $A,B\subset X$, we define their Hausdorff distance as
\[d_H(A,B)=\inf\{\varepsilon>0:A\subset B_{\boldsymbol{\underline\varepsilon}}\text{ and }B\subset A_{\boldsymbol{\underline\varepsilon}}\}.\]

\subsection{General Measure Theory}
 
Let $\mathcal{M}(X)$ denote the set of all Borel regular measures on $X$ and $\mathcal{M}_{\text{loc}}(X)\subset\mathcal{M}(X)$ be the set of all those that are finite on all bounded sets.
 
The support of a measure $\mu\in\mathcal{M}(X)$ is a subset of $X$ and is defined as
\[\supp\mu:=\{x\in X: \text{ for all open neighbourhoods }U_x\text{ of }x,\,\mu(U_x)>0\}.\]
We may restrict a measure to a set $E\subset X$ defined as $\mu\llcorner E(A):=\mu(E\cap A)$ and pushforward a measure by a map $f$ defined as $f_\#\mu(A):=\mu(f^{-1}(A))$.
%We also require the following standard fact:
%\begin{lemma}
%	Let $\mu\in\mathcal{M}_{loc}(X)$, then for any $r>0$,
%	\[x\mapsto \mu(U(x,r)) \text{ is lower semicontinuous}\]
%	and 
%	\[x\mapsto \mu(B(x,r)) \text{ is upper semicontinuous.}\]
%\end{lemma}
\subsection{Rectifiability}

\begin{definition}[Hausdorff measure]\label{def:hausdorff}
	For $s\geq 0$, $\delta\in[0,\infty]$ and a set $E\subset X$, let
	\[\mathcal{H}_\delta^s(E)=\inf\left\{\sum_{i=1}^\infty \textnormal{diam}(A_i)^s:\,E\subset\bigcup A_i,\,\textnormal{diam}(A_i)<\delta\right\}.\]
	Then the $s$-dimensional Hausdorff measure of $E$ is defined as
	\[\mathcal{H}^s(E):=\lim_{\delta\rightarrow 0}\mathcal{H}^s_\delta(E).\]
\end{definition}
For any $s$, Hausdorff measure is a Borel regular measure \cite[Corollary 4.5]{book:mattila}.
\begin{lemma}\label{lemma:hausdorff}
	Let $E\subset X$ be $\mathcal{H}^s$-measurable with $\mathcal{H}^s(E)<\infty$. Then for any $\varepsilon>0$ we can find a $\delta_0>0$ such that for any collection of sets of the form $\{A_i\subset X:\textnormal{diam}(A_i)\leq \delta_0\}_{i\in\mathbb{N}}$ satisfies
	\[\mathcal{H}^s(E\cap \bigcup_{i\in\mathbb{N}} A_i)\leq \sum_i \textnormal{diam}(A_i)^s+\varepsilon.\] 
\end{lemma}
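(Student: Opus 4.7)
The plan is to exploit the hypothesis $\mathcal{H}^s(E)<\infty$ to pin down a single threshold $\delta_0$ at which $\mathcal{H}^s_{\delta_0}(E)$ is within $\varepsilon/2$ of $\mathcal{H}^s(E)$, and then to complete any given family $\{A_i\}$ to an economical cover of all of $E$ by separately covering the leftover $E\setminus\bigcup A_i$. The arithmetic then forces $\sum\textnormal{diam}(A_i)^s$ to account for almost all of the mass that $E$ places inside $\bigcup A_i$.

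More concretely, I would first fix $\delta_0>0$ with $\mathcal{H}^s_{\delta_0}(E)>\mathcal{H}^s(E)-\varepsilon/2$; such a $\delta_0$ exists because $\delta\mapsto\mathcal{H}^s_\delta(E)$ is non-decreasing as $\delta\downarrow 0$ and converges to the finite value $\mathcal{H}^s(E)$. Given a family $\{A_i\}$ with $\textnormal{diam}(A_i)\leq\delta_0$, I would replace each $A_i$ by its closure $\overline{A_i}$: closures do not alter diameters in a metric space, but $U:=\bigcup_i\overline{A_i}$ is now an $F_\sigma$, hence Borel, hence $\mathcal{H}^s$-measurable. Since $E$ is $\mathcal{H}^s$-measurable as well, this unlocks the decomposition $\mathcal{H}^s(E)=\mathcal{H}^s(E\cap U)+\mathcal{H}^s(E\setminus U)$, which is the key ingredient that an arbitrary union $\bigcup A_i$ would not supply.

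Next I would cover the residual set $E\setminus U$ by a family $\{B_j\}$ of diameter at most $\delta_0$ with $\sum_j\textnormal{diam}(B_j)^s\leq \mathcal{H}^s(E\setminus U)+\varepsilon/2$; this is legitimate because $\mathcal{H}^s_{\delta_0}(E\setminus U)\leq\mathcal{H}^s(E\setminus U)<\infty$. Concatenating, $\{\overline{A_i}\}\cup\{B_j\}$ is a cover of $E$ with every member of diameter at most $\delta_0$, so the definition of $\mathcal{H}^s_{\delta_0}$ gives
\[
\mathcal{H}^s_{\delta_0}(E)\leq \sum_i\textnormal{diam}(A_i)^s+\sum_j\textnormal{diam}(B_j)^s\leq \sum_i\textnormal{diam}(A_i)^s+\mathcal{H}^s(E\setminus U)+\varepsilon/2.
\]
Combining with the choice of $\delta_0$, the term $\mathcal{H}^s(E\setminus U)$ cancels and I obtain $\mathcal{H}^s(E\cap U)\leq\sum_i\textnormal{diam}(A_i)^s+\varepsilon$, which is enough because $E\cap\bigcup_i A_i\subset E\cap U$.

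The one genuine obstacle is measurability: for arbitrary subsets $A_i\subset X$ the union $\bigcup A_i$ need not be $\mathcal{H}^s$-measurable, which would break the additive decomposition of $\mathcal{H}^s(E)$ that drives the whole argument. The closure trick resolves this cleanly, using the metric-space fact that $\textnormal{diam}(\overline{A})=\textnormal{diam}(A)$. A lesser point is the mismatch between the strict inequality $\textnormal{diam}(A_i)<\delta$ in the definition of $\mathcal{H}^s_\delta$ and the weak inequality $\textnormal{diam}(A_i)\leq\delta_0$ in the hypothesis, but this is absorbed by a harmless shrinkage (e.g.\ by proving the statement with $\delta_0/2$ in place of $\delta_0$).
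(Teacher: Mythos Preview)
Your proof is correct and follows essentially the same approach as the paper: both choose $\delta_0$ so that $\mathcal{H}^s_{\delta_0}(E)$ approximates $\mathcal{H}^s(E)$, then use the additive decomposition $\mathcal{H}^s(E)=\mathcal{H}^s(E\cap U)+\mathcal{H}^s(E\setminus U)$ together with the bound $\mathcal{H}^s_{\delta_0}(E\cap U)\leq\sum_i\textnormal{diam}(A_i)^s$ to cancel the residual term. Your explicit use of closures to secure measurability of $U$ is a point of extra care that the paper glosses over; conversely, the paper avoids your $\varepsilon/2$ split by bounding $\mathcal{H}^s_{\delta_0}(E\setminus U)\leq\mathcal{H}^s(E\setminus U)$ directly rather than constructing an auxiliary cover $\{B_j\}$.
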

\begin{proof}
	By the definition of Hausdorff measure, $\mathcal{H}^s_\delta(E)$ is monotonically increasing as $\delta\rightarrow 0$. By assumption, $\mathcal{H}^s(E)<\infty$ hence for any $\varepsilon>0$ we can find a $\delta_0>0$ such that
	\[\mathcal{H}^s(E)\leq\mathcal{H}^s_{\delta}(E)+\varepsilon \qquad\text{for all }\delta\leq\delta_0.\]
	Let $\{A_i\subset X:\textnormal{diam}(A_i)\leq \delta_0\}_{i\in\mathbb{N}}$, then
	\begin{align*}
		\mathcal{H}^s(E\cap \bigcup_{i\in\mathbb{N}} A_i)+\mathcal{H}^s(E\backslash \bigcup_{i\in\mathbb{N}} A_i)&=\mathcal{H}^s(E)\\
		&\leq\mathcal{H}^s_{\delta}(E)+\varepsilon \\
		&\leq\mathcal{H}^s_\delta(E\backslash \bigcup_{i\in\mathbb{N}} A_i)+\mathcal{H}^s_\delta(E\cap \bigcup_{i\in\mathbb{N}} A_i)+\varepsilon \\
		&\leq \mathcal{H}^s(E\backslash \bigcup_{i\in\mathbb{N}} A_i)+\sum_i \textnormal{diam}(A_i)^s+\varepsilon.
	\end{align*}
	Subtracting $\mathcal{H}^s(E\backslash \bigcup_{i\in\mathbb{N}} A_i)$ from both sides (which is finite by assumption) gives the desired result.
\end{proof}

\begin{definition}[Lipschitz Map]\label{def:lipschitz}
	Given a metric space $ (X,d) $ and a set $ A\subset \mathbb{R}^n $, a map $ f:A\rightarrow X $ is Lipschitz if there exists $ K\geq0 $ such that
	\[ d(f(x),f(y))\leq K|x-y|,\qquad \forall x,\,y\in A. \]
\end{definition}

\begin{definition}[Rectifiable]\label{def:rectifiable}
	A set $ E\subset X $ is $n$-rectifiable if there are at most countably many Lipschitz maps $ f_i:A_i\rightarrow X $, with $ A_i\subset \mathbb{R}^n $, such that 
	\[ \mathcal{H}^n(E\setminus \bigcup_i f_i(A_i) )=0. \]
	A set $ P\subset X $ is purely $n$-unrectifiable if $ \mathcal{H}^n(P\cap E)=0 $ for all $ n $-rectifiable $ E\subset X. $
\end{definition}

\begin{lemma}\label{lemma:decomprec/unrec}
	Let $E\subset X$ be $\mathcal{H}^n$-measurable with $\mathcal{H}^n(E)<\infty$. There exists a decomposition $E=R\cup P$ where $R$ is $n$-rectifiable and $P$ is purely $n$-unrectifiable.
\end{lemma}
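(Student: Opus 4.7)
The plan is to extract a rectifiable part of $E$ of maximal measure and declare the complement in $E$ to be the purely unrectifiable part. Concretely, I would set
\[ M := \sup\{\mathcal{H}^n(E \cap R) : R \subset X \text{ is } n\text{-rectifiable}\}, \]
and note that $M \leq \mathcal{H}^n(E) < \infty$. Choosing a sequence of $n$-rectifiable sets $R_i$ with $\mathcal{H}^n(E\cap R_i)\to M$, I would form the countable union $R' := \bigcup_i R_i$ and then set $R := R' \cap E$.

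The first key observation to verify is that a countable union of $n$-rectifiable sets is again $n$-rectifiable, which follows immediately from Definition \ref{def:rectifiable} by concatenating the countably many Lipschitz parametrisations for each $R_i$. Hence $R'$ is rectifiable; since subsets of rectifiable sets are rectifiable (same parametrisations restricted to preimages), $R$ is rectifiable too. Monotonicity of $\mathcal{H}^n$ as an outer measure gives $\mathcal{H}^n(E \cap R) \geq \mathcal{H}^n(E \cap R_i)$ for every $i$, so $\mathcal{H}^n(E \cap R) = M$.

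It then remains to show $P := E \setminus R$ is purely $n$-unrectifiable. I would argue by contradiction: if some $n$-rectifiable set $S\subset X$ satisfies $\mathcal{H}^n(P\cap S)>0$, consider the set $R \cup (P \cap S)$, which is again $n$-rectifiable by the observation above. Because $R$ and $P\cap S$ are disjoint subsets of $E$ with finite $\mathcal{H}^n$-measure, and (enlarging $R$ to a Borel set of equal $\mathcal H^n$-measure if necessary to use additivity) we obtain
\[ \mathcal{H}^n\bigl(E \cap (R \cup (P \cap S))\bigr) = \mathcal{H}^n(R) + \mathcal{H}^n(P\cap S) = M + \mathcal{H}^n(P\cap S) > M, \]
contradicting the definition of $M$. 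I expect no serious obstacle here; the only mild technical care is that $\mathcal{H}^n$ is only an outer measure on arbitrary subsets of $X$, so additivity of the two disjoint pieces $R$ and $P\cap S$ must be justified by passing to Borel hulls of equal measure, which is available since $\mathcal{H}^n$ is Borel regular and $\mathcal{H}^n(E)<\infty$.
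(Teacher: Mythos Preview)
Your argument is correct and is precisely the standard maximality argument (as in Mattila, Theorem~15.6) that the paper simply cites without reproducing; the paper's own proof is just the remark that the Euclidean proof carries over verbatim. Your care with Borel regularity for the additivity step is appropriate, and in the complete setting of the paper one may alternatively note that Lipschitz images of closed subsets of $\mathbb{R}^n$ are $\sigma$-compact, so rectifiable sets can be taken $\mathcal{H}^n$-measurable from the outset.
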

\begin{proof}
	The proof for $X=\mathbb{R}^m$ is given in \cite[Theorem 15.6]{book:mattila}.
	The proof for general $X$ is identical.
\end{proof}

\subsection{Density and Doubling}
\begin{definition}[Density]\label{def:density}
	For $\mu\in\mathcal{M}(X)$, positive number $ s\in[0,\infty) $ and point $ x\in X $, we can define the upper and lower $ s $-dimensional Hausdorff densities of $ \mu $ at $ x $ as
	\begin{equation*}
		\Theta^{*s}(\mu,x)=\limsup_{r\rightarrow 0} \frac{\mu(B(x,r))}{(2r)^s};
	\end{equation*}
	\begin{equation*}
		\Theta^{s}_*(\mu,x)=\liminf_{r\rightarrow 0} \frac{\mu(E\cap B(x,r))}{(2r)^s}.
	\end{equation*}
	If the upper and lower densities agree then the common value is the $ s $-dimensional density of $ \mu $ at $ x $ and is denoted $ \Theta^s(\mu,x). $
	If $ E\subset X $ we write $\Theta^{*s}(E,x)$, $\Theta^s_*(E,x)$ and $\Theta^s(E,x)$ for $\Theta^{*s}(\mathcal H^n|_E,x)$, $\Theta^{s}_*(\mathcal H^n|_E,x)$ and $\Theta^*(\mathcal H^n|_E,x)$ respectively.
\end{definition}

\begin{theorem}[\cite{book:mattila} Theorem 6.2]\label{theorem:density}
	Let $ E\subset X $ be such that $ 0<\mathcal{H}^s(E)<\infty $ then
	\begin{enumerate}[left=1cm, label= \textbf{(\roman{enumi})}, ref=Theorem  \ref{theorem:density}\textbf{(\roman{enumi})}]
		\item $ 2^{-s}\leq\Theta^{*s}(E,x)\leq 1 $ for $ \mathcal{H}^s $-almost all $ x\in E $;\label{theorem:densitypart1}
		\item If $ E $ is $ \mathcal{H}^s $-measurable, then $ \Theta^{*s}(E,x)=0 $ for $ \mathcal{H}^s $-almost all $x\in X\backslash E. $\label{theorem:densitypart2}
	\end{enumerate}
\end{theorem}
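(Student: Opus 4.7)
The plan is to treat the three inequalities separately, using covering arguments based on the $5r$-covering lemma (which remains valid in any metric space for families of balls with bounded radii).

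For the upper bound in (i), I argue by contradiction: fix $t > 1$ and let $A_t := \{x \in E : \Theta^{*s}(E,x) > t\}$. For each $x \in A_t$ and each $\delta > 0$ the definition of upper density provides a ball $B(x,r)$ with $r < \delta$ and $\mathcal H^s(E \cap B(x,r)) > t(2r)^s$. Outer regularity of $\mathcal H^s \llcorner E$ lets us confine the chosen balls to an open neighbourhood $U$ of $A_t$ with $\mathcal H^s(E\cap U)$ close to $\mathcal H^s(A_t)$. The $5r$-covering lemma extracts a disjoint subfamily whose fivefold enlargements cover $A_t$, giving
\[
\mathcal H^s_{10\delta}(A_t) \le \frac{5^s}{t}\, \mathcal H^s(E\cap U).
\]
Letting $\delta \to 0$ and $U \downarrow A_t$ forces $\mathcal H^s(A_t) = 0$ whenever $t > 5^s$. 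The main technical obstacle is upgrading the constant $5^s$ to the sharp $1$; this requires replacing balls by the efficient covers of $E$ appearing in Definition~\ref{def:hausdorff}, together with Lemma~\ref{lemma:hausdorff}, and exploiting the elementary fact that any set of diameter $d$ sits inside a ball of radius $d$ about any of its points, so that the Hausdorff cover directly controls $\Theta^{*s}$ rather than via a $5$-fold enlargement.

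For the lower bound in (i), suppose $B_t := \{x \in E : \Theta^{*s}(E,x) < t\}$ had positive measure for some $t < 2^{-s}$. By Lemma~\ref{lemma:hausdorff}, cover $E$ by sets $\{A_i\}$ of arbitrarily small diameter with $\sum_i \textnormal{diam}(A_i)^s$ within $\varepsilon$ of $\mathcal H^s(E)$. For each $A_i$ meeting $B_t$, pick $x_i \in A_i \cap B_t$; then $A_i \subset B(x_i, \textnormal{diam}(A_i))$, and the density hypothesis gives $\mathcal H^s(E \cap A_i) < t\cdot 2^s \,\textnormal{diam}(A_i)^s$ once diameters are small enough (noting that $\Theta^{*s}(x,E)<t$ controls all sufficiently small radii, not merely a subsequence). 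Summing and comparing to $\mathcal H^s(B_t)$ yields $\mathcal H^s(B_t) \le t\cdot 2^s(\mathcal H^s(E)+\varepsilon)$, contradicting $t < 2^{-s}$ after first restricting to a subset of $B_t$ with density bounded away from $2^{-s}$ and nearly full measure in its own right.

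For part (ii), I rerun the argument for the upper bound of (i) but with the density condition evaluated at points of $X \setminus E$; the new ingredient is $\mathcal H^s$-measurability of $E$. By Borel regularity, approximate $E$ from inside by a closed set $E_0 \subset E$ with $\mathcal H^s(E \setminus E_0) < \varepsilon$. Then every $x \in X \setminus E_0$ lies at positive distance from $E_0$, so among the balls witnessing $\Theta^{*s}(E,x) > \eta$ one can choose those disjoint from $E_0$; for such balls, $\mathcal H^s(E \cap B(x,r)) \le \mathcal H^s(E \setminus E_0)$. Applying the $5r$-covering lemma to $D_\eta := \{x \in X\setminus E : \Theta^{*s}(E,x) > \eta\}$ bounds $\mathcal H^s(D_\eta)$ by a constant times $\eta^{-1}\mathcal H^s(E \setminus E_0)$, which tends to $0$ as $\varepsilon \to 0$. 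Taking $\eta = 1/n$ and a countable union completes the proof.
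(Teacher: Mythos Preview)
The paper does not prove this statement: it is cited without proof as Theorem~6.2 of Mattila's book \cite{book:mattila} and used thereafter as a black box. There is therefore no in-paper argument against which to compare your proposal.

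Your sketch follows the standard textbook strategy and is broadly sound for the lower bound in (i) and for (ii). There is, however, a genuine gap in your fix for the sharp upper bound in (i). Passing to an efficient Hausdorff cover $\{A_i\}$ and using $A_i\subset B(x_i,\textnormal{diam}(A_i))$ for some $x_i\in A_i\cap A_t$ would require $\mathcal H^s\big(E\cap B(x_i,\textnormal{diam}(A_i))\big)>t\,(2\,\textnormal{diam}(A_i))^s$; but $\Theta^{*s}(E,x_i)>t$ only guarantees this for \emph{some} arbitrarily small radii, not for the particular radius $\textnormal{diam}(A_i)$ handed to you by the cover. (The manoeuvre you describe is precisely what drives the \emph{lower}-bound argument, where $\Theta^{*s}<t$ genuinely controls all sufficiently small radii.) In Mattila's Euclidean setting the sharp constant comes from the Besicovitch covering theorem, which is unavailable in a general metric space; the metric-space proof (Federer~2.10.17--19) instead extracts from the density-witnessing balls a disjoint subfamily that exhausts $\mathcal H^s\llcorner A_t$, and this requires a more delicate greedy selection than the $5r$-lemma. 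Separately, in your lower-bound argument the radius threshold below which $\mathcal H^s(E\cap B(x,r))<t(2r)^s$ holds depends on $x$; your closing remark about first restricting to a uniform subset is the correct repair and should be made explicit via the decomposition $B_t=\bigcup_{k}\{x:\text{the bound holds for all }r<1/k\}$, running the argument on each piece with $E$ replaced by that piece.
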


\begin{corollary}\label{cor:densityofrestrictedsets}
	Let $ E,F\subset X $ be $\mathcal{H}^n$-measurable with $F\subset E$ and $\mathcal{H}^n(E)<\infty$. Then for $\mathcal{H}^n$-a.e. $x\in F$,
	\[\Theta^{*n}(F,x)=\Theta^{*n}(E,x)\qquad\text{and}\qquad \Theta^{n}_*(F,x)=\Theta^{n}_*(E,x).\]
\end{corollary}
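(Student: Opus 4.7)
The plan is to reduce to Theorem \ref{theorem:density}(ii) applied to the complementary piece $G := E \setminus F$. Since $E$ and $F$ are both $\mathcal{H}^n$-measurable, so is $G$, and $\mathcal{H}^n(G) \leq \mathcal{H}^n(E) < \infty$. Because $F$ and $G$ are disjoint with union $E$, countable additivity of $\mathcal{H}^n \llcorner E$ on Borel sets gives, for every $x$ and $r > 0$,
\[
\mathcal{H}^n(E \cap B(x,r)) = \mathcal{H}^n(F \cap B(x,r)) + \mathcal{H}^n(G \cap B(x,r)).
\]
The goal is therefore to show that at $\mathcal{H}^n$-a.e.\ point $x \in F$, the $G$-contribution is asymptotically negligible, i.e.\ $\Theta^{*n}(G,x) = 0$.

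This is exactly what Theorem \ref{theorem:density}(ii) gives, once we note $F \subset X \setminus G$. If $\mathcal{H}^n(G) = 0$, then $\Theta^{*n}(G,x) = 0$ for every $x$, and the conclusion is immediate. Otherwise, $0 < \mathcal{H}^n(G) < \infty$ and Theorem \ref{theorem:density}(ii) applied to $G$ gives $\Theta^{*n}(G,x) = 0$ for $\mathcal{H}^n$-a.e.\ $x \in X \setminus G$, hence for $\mathcal{H}^n$-a.e.\ $x \in F$. In particular, $\lim_{r \to 0} \mathcal{H}^n(G \cap B(x,r))/(2r)^n = 0$ at such $x$, and substituting into the displayed identity above, the elementary fact that adding a null sequence does not alter a $\limsup$ or $\liminf$ yields simultaneously $\Theta^{*n}(E,x) = \Theta^{*n}(F,x)$ and $\Theta^n_*(E,x) = \Theta^n_*(F,x)$. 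There is no real obstacle here; the only thing to notice is the trivial case $\mathcal{H}^n(G) = 0$, which must be handled separately since Theorem \ref{theorem:density} is stated under the hypothesis $\mathcal{H}^n(G) > 0$.
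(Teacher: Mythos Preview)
Your proof is correct and is precisely the standard derivation. The paper states this corollary without proof, leaving it as an immediate consequence of Theorem~\ref{theorem:density}(ii); your argument spells out exactly that intended deduction.
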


\begin{definition}[Ahlfors $n$-regular measures]
	A metric space $X$ along with a measure $\mu\in\mathcal{M}_{loc}(X)$ is $C$-Ahlfors $n$-regular if for all $r>0$,
	\[\frac{1}{C}r^n\leq \mu(B(x,r))\leq Cr^n\qquad \forall\,x\in \supp\mu.\]
	Denote the set of all $C$-Ahlfors regular metric measure spaces as $\mathcal{A}^n(C)$.
\end{definition}

Finally we have the notion of doubling for measures.
\begin{definition}[Doubling]
	For $K\geq1$, $\mu\in\mathcal{M}(X)$ is $K$-doubling if 
	\[0<\mu(B(x,\,2r))\leq K\mu(B(x,\,r))<\infty,\qquad\forall\,x\in X,\,r>0,\]
	or asymptotically doubling if
	\[\limsup_{r\rightarrow 0}\frac{\mu(B(x,\,2r))}{\mu(B(x,\,r))}<\infty\qquad\textnormal{ for }\mu-a.e.\,x\in X.\]
\end{definition}

\begin{remark}\label{remark:densedouble}
	Let $E\subset X$ be $\mathcal{H}^n$-measurable with $\mathcal{H}^n(E)<\infty$ and $\Theta^n_*(E,x)>0$ for $\mathcal{H}^n$-a.e. $x\in E$. Then $\mathcal{H}^n\llcorner E$ is asymptotically doubling. Indeed, By \ref{theorem:densitypart1}, for $\mathcal{H}^n$-a.e. $x\in E$ we have $\Theta^{*n}(E,x)\leq 1$, so for any such $x$ with $\Theta^n_*(E,x)>0$ we have:
	\begin{align*}
		\limsup_{r\rightarrow 0}\frac{\mathcal{H}^n(E\cap B(x,2r))}{\mathcal{H}^n(E\cap B(x,r))} &= 2^n\limsup_{r\rightarrow 0}\frac{\mathcal{H}^n(E\cap B(x,2r))}{(2\cdot 2r)^n}\cdot \frac{(2r)^n}{\mathcal{H}^n(E\cap B(x,r))}\\
		&\leq \frac{2^n}{\Theta_*^n(E,x)}<\infty.
	\end{align*}
\end{remark}

\begin{theorem}[\cite{book:heinonen} Section 3.4]\label{lebesguepoints}
	Let $\mu\in \mathcal{M}(X)$ be asymptotically doubling and $E\subset X$. Then for $\mu$-a.e. $x\in E$
	\begin{equation*}
		\lim_{r\rightarrow 0}\frac{\mu(E\cap B(x,r))}{\mu(B(x,r))}=1.
	\end{equation*}
	Any such $x\in S$ is known as a Lebesgue density point of $E$.
\end{theorem}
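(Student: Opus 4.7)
The plan is to prove this via the standard Hardy--Littlewood maximal function approach, after first reducing the asymptotic doubling assumption to a uniform one on a set of full $\mu$-measure.

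First I would exhaust $X$ by sets with uniform doubling at small scales. For each $K,m\in\mathbb{N}$, set
\[ X_{K,m}:=\{x\in X:\mu(B(x,2r))\leq K\mu(B(x,r)) \text{ for all } r\in(0,1/m]\}. \]
The asymptotic doubling hypothesis implies $\mu\bigl(X\setminus\bigcup_{K,m}X_{K,m}\bigr)=0$, so by countable union it suffices to prove the conclusion for $\mu$-a.e.\ $x$ in a fixed $X_{K,m}$. On such a set, $\mu$ is doubling with constant $K$ at all scales $\leq 1/m$, which is all that is needed to run the usual arguments.

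Next, I would establish a weak-type $(1,1)$ bound for the local maximal function
\[ M_m f(x):=\sup_{0<r<1/m}\frac{1}{\mu(B(x,r))}\int_{B(x,r)}|f|\,d\mu. \]
This is a standard consequence of the $5r$-covering lemma, which holds in \emph{any} metric space for families of balls with bounded radii and requires no measure assumption: given the superlevel set $\{M_m f>\lambda\}\cap X_{K,m}$, extract a disjoint subfamily of witnessing balls whose $5$-fold enlargements cover, then use the uniform doubling on $X_{K,m}$ to pay $K^3$ for the enlargement and conclude
\[ \mu(\{x\in X_{K,m}:M_m f(x)>\lambda\})\leq \frac{K^3}{\lambda}\|f\|_{L^1(\mu)}. \]

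Finally, I would deduce the density statement. By Borel regularity of $\mu$, we may assume $E$ is Borel and, by $\sigma$-finiteness, that $\mu(E)<\infty$. For any $\varepsilon>0$, choose an open $U\supset E$ with $\mu(U\setminus E)<\varepsilon$. For $x\in E\cap X_{K,m}$, eventually $B(x,r)\subset U$, so
\[ \liminf_{r\to 0}\frac{\mu(E\cap B(x,r))}{\mu(B(x,r))}\geq 1-\limsup_{r\to 0}\frac{\mu((U\setminus E)\cap B(x,r))}{\mu(B(x,r))}=1-M_m(\chi_{U\setminus E})(x) \]
off a set whose measure, by the weak $(1,1)$ bound applied to $\chi_{U\setminus E}$, is at most $K^3\varepsilon/\lambda$ for each $\lambda$. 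Letting $\varepsilon\to 0$ along a countable sequence, the liminf equals $1$ on a set of full $\mu$-measure in $E\cap X_{K,m}$; since the limsup is trivially at most $1$, we obtain the limit. Taking the countable union over $K,m$ finishes the proof.

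The main obstacle is the initial reduction from asymptotic to uniform doubling: once one commits to working on $X_{K,m}$ and at scales below $1/m$, the remainder is the classical argument of Heinonen, and the only care needed is to ensure the maximal inequality and density-point conclusion are stated for scales bounded by $1/m$ rather than for all scales.
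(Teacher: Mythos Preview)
The paper does not prove this statement at all: it is quoted as a result from Heinonen's book, with the citation in the theorem header, and no proof environment follows. Your sketch is the standard argument one finds in that reference---reduce asymptotic doubling to uniform doubling on the sets $X_{K,m}$, run the $5r$-covering lemma to get a weak-type $(1,1)$ maximal inequality at small scales, and deduce the density statement by approximating $E$ from outside by open sets---so there is nothing to compare beyond noting that your approach is the cited one.

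Two small points worth tightening if you write this out in full. First, to control $\mu(B(x,5r))$ by $K^{3}\mu(B(x,r))$ you need the doubling inequality at scales $r,2r,4r$, so the maximal function should be restricted to radii $r<1/(4m)$ rather than $r<1/m$ (or define $X_{K,m}$ with the bound holding for $r\le 4/m$). Second, the displayed line with ``$=1-M_m(\chi_{U\setminus E})(x)$'' should be an inequality $\geq 1-M_m(\chi_{U\setminus E})(x)$, since the $\limsup$ of the averages is only bounded above by the maximal function. Neither affects the validity of the argument.
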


	\subsection{Pointed Gromov-Hausdorff distance}\label{metricspaces}
	To develop a theory of metric tangents, we require a few standard tools from metric geometry.
%\begin{definition}[Hausdorff distance] \unsure{Is this even necessary?}
%	Let $ (X,d) $ be a metric space and $ A,\,B\subset X $. Then we define the Hausdorff distance between $ A $ and $ B $ as
%	\begin{equation}
%		d_{H}(A,\,B):=\inf\{\varepsilon>0: A\subset B_{\underline\varepsilon} \textnormal{ and } B\subset A_{\underline\varepsilon}\}.
%	\end{equation}
%\end{definition}
%\begin{definition}[Gromov-Hausdorff distance] \unsure{Is this even necessary?}
%	Let $ (X,\,d_x) $ and $ (Y,\,d_y) $ be metric spaces, then we can define the Gromov-Hausdorff distance as
%	\[ d_{GH}(X,Y):=\inf\{d_{H}(X,\,Y):\exists \textnormal{ isometric embeddings }X,\,Y\rightarrow Z \}. \]
%\end{definition}
\begin{definition}[Pointed metric (measure) space]
	A pointed metric space $ (X,\,d,\,x) $ is an ordered triple consisting of a metric space $(X,\,d) $ and a point $ x\in X $. A pointed metric measure space $ (X,\,d,\,\mu,\,x) $ consists of a complete and separable pointed metric space $ (X,x) $ and a measure $ \mu\in\mathcal{M}_{loc}(X)$.
	 
	An isometric embedding $ (X,\,d,\,x)\rightarrow(Y,\,\rho,\,y) $ of pointed metric spaces is an isometric embedding $ (X,\,d) \rightarrow (Y,\,\rho) $ such that $ x\mapsto y. $
\end{definition}
\begin{notation}
	When the meaning is clear, we will omit the metric and abbreviate $(X,d,x) $ or $ (X,d,\mu,x) $ to $(X,x) $ or $ (X,\mu,x) $ respectively.
\end{notation}
We first require a pointed notion of distance for sets that share the same ambient space.
\begin{definition}[Pointed Hausdorff distance]
	Let $ (Z,d) $ be a metric space and $ X,\,Y\subset Z $. Then for $ z\in Z $, we define the pointed Hausdorff distance between $ X $ and $ Y $ as
	\begin{equation}
		d_{H(z)}(X,\,Y):=\inf\left\{\varepsilon>0: X\cap B(z,\,1/\varepsilon)\subset Y_{\underline\varepsilon},\, Y\cap B(z,\,1/\varepsilon)\subset X_{\underline\varepsilon} \right\}.
	\end{equation}
\end{definition}
We can now define the pointed Gromov-Hausdorff distance which allows us to compare sets across different ambient metric spaces.
\begin{notation}
	In general we shall not distinguish between the original objects $X$ and $\mu$ and their isometric images $\iota(X)\subset Z$ and $\iota_\#\mu$.
\end{notation}
\begin{definition}[Pointed Gromov-Hausdorff distance] 
	Let $ (X,\,x) $ and $ (Y,\,y) $ be pointed metric spaces. We define the pointed Gromov-Hausdorff distance as
	\[ d_{pGH}((X,\,x),(Y,\,y))=\inf\left\{d_{H(z)}(X,\,Y):\parbox{4.2cm}{\centering $\exists$ metric space $(Z,\,z)$ and isometric embeddings\\ $(X,\,x),(Y,\,y)\rightarrow(Z,\,z)$}\right\}. \]
\end{definition}
Finally, let us define the space on which $d_{pGH}$ is a metric. Recall that a metric space is proper if all closed balls are compact.
\begin{corollary}[Corollary 2.25 \cite{bate}]
	If $ \mathbb{M}_p $ is the set of isometry classes of proper pointed metric spaces, then $ (\mathbb{M}_p,\,d_{pGH}) $ is a complete and seperable metric space. In particular, if $ (X,\,x) $ and $ (Y,\,y) $ are proper with 
	\[ d_{pGH}((X,x),(Y,y))=0 \]
	then there is an isometry $(X,x)\rightarrow(Y,y)$.
\end{corollary}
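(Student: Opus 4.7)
The plan is to verify that $d_{pGH}$ satisfies the metric axioms, and then establish separability and completeness in turn. Symmetry is immediate from the symmetric form of the definition. The triangle inequality follows by the standard gluing construction: given near-optimal isometric embeddings of $(X,x),(Y,y)$ into $(Z_1,z_1)$ and of $(Y,y),(W,w)$ into $(Z_2,z_2)$, one forms the disjoint union $Z_1 \sqcup Z_2$ modulo identifying the two copies of $Y$, equipped with the infimal-path pseudometric; this is an ambient space witnessing $d_{pGH}((X,x),(W,w)) \leq d_{pGH}((X,x),(Y,y)) + d_{pGH}((Y,y),(W,w))$ up to an arbitrarily small error.

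The main content is the non-degeneracy statement: if $d_{pGH}((X,x),(Y,y)) = 0$ with both spaces proper, then there is a pointed isometry. The approach I would take is Arzel\`a--Ascoli--type extraction. For each $n$, pick an ambient $(Z_n, z_n)$ with isometric embeddings of $(X,x)$ and $(Y,y)$ whose Hausdorff distances at $z_n$ tend to $0$; this yields, for each $n$, an $\varepsilon_n$-isometry $f_n : X \cap B(x, 1/\varepsilon_n) \to Y$ sending $x$ to $y$ (defined by choosing, for each $p \in X$, a point of $Y$ within $\varepsilon_n$ of the image of $p$ in $Z_n$). Since $Y$ is proper, its closed balls are compact, so on any fixed closed ball in $X$ (also compact) a diagonal argument extracts a subsequence converging pointwise on a dense countable subset to a $1$-Lipschitz map. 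Repeating over an exhausting family of balls and diagonalising produces a pointed isometric embedding $f : X \to Y$, and symmetrically one gets $g : Y \to X$; standard arguments then show these are mutual inverses.

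For separability, I would use properness to approximate any $(X, d, x)$ by finite pointed spaces: for each $R$, take a finite $\varepsilon$-net $N_{R,\varepsilon}$ of $B(x, R)$ containing $x$, replace the induced metric by a nearby rational-valued metric, and verify that such finite rational pointed spaces form a countable $d_{pGH}$-dense subset of $\mathbb{M}_p$. For completeness, given a Cauchy sequence $(X_n, x_n)$, pass to a fast subsequence and build a single ambient proper space $Z$ containing isometric copies of all $X_n$ with $x_n$'s converging to some $z \in Z$; the closure of $\bigcup_n X_n$ in $Z$ provides the limit, with properness guaranteed by the uniform total-boundedness of balls inherited from the Cauchy condition.

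The main obstacle is the non-degeneracy step, because one must simultaneously control approximation errors on growing balls and extract a genuine limit map rather than a mere quasi-isometry. Properness is essential here to supply the compactness that powers the diagonal extraction; without it one only obtains nets of near-isometries with no convergent subsequence. Everything else, while requiring some care, reduces to bookkeeping around the definition of $d_{H(z)}$.
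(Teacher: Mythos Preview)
The paper does not actually prove this statement; it is quoted verbatim as Corollary~2.25 of \cite{bate} and stated without proof, so there is no ``paper's own proof'' to compare against. Your sketch follows the standard route one finds in the metric-geometry literature (gluing for the triangle inequality, Arzel\`a--Ascoli extraction over exhausting compact balls for non-degeneracy, finite rational nets for separability, and iterated-gluing ambient space for completeness), and the outline is sound. One point worth tightening: after producing pointed isometric embeddings $f\colon X\to Y$ and $g\colon Y\to X$, the phrase ``standard arguments then show these are mutual inverses'' hides a genuine step. What one actually uses is that an isometric self-embedding of a compact metric space is surjective (via a $\delta$-separated orbit argument), applied ball-by-ball to $g\circ f$ and $f\circ g$ using properness; you should state this explicitly rather than appeal to it as standard.
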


	\subsection{Convergence of measures}
Let $C_{bs}(X)$ denote the space of bounded and continuous maps $f:X\to \mathbb{R}$ with bounded support.
\begin{definition}[Weak* convergence]
	A sequence $\mu_i\in\mathcal{M}_{loc}(X)$ weak* converges to $\mu\in\mathcal{M}_{loc}(X)$ if
	\[\int f\,d\mu_i\rightarrow \int f\,d\mu\]
	for all $f\in C_{bs}(X)$. We use $\xrightharpoonup{*}$ to denote this convergence.
\end{definition}
\begin{prop}[\cite{book:maggi} Proposition 4.26]\label{prop:properties_of_weak_con}
	Let $\{\mu_i\},\mu\in\mathcal{M}_{loc}(X)$. The following are equivalent:
	\begin{enumerate}
		\item $\mu_i\xrightharpoonup{*}\mu$.
		\item If $x_i\rightarrow x\in X$, then for every $r>0$,
		\begin{equation}\label{balsam}
			\mu(U(x,r))\leq\liminf_{i\rightarrow \infty}\mu_i(U(x_i,r))
		\end{equation}
		and 
		\begin{equation}\label{fir}
			\mu(B(x,r))\geq\limsup_{i\rightarrow \infty}\mu_i(B(x_i,r)).
		\end{equation}
	\item If $B(x,r)$ is such that with $\mu(B(x,r)\backslash U(x,r))=0$, then
	\[\mu(B(x,r))=\lim_{i\rightarrow \infty}\mu_i(B(x_i,r)).\]
	\end{enumerate}
	Moreover, if $\mu_i\xrightharpoonup{*}\mu$, then for every $x\in\supp\mu$, there exists a sequence $x_i\in \supp\mu_i$ such that $x_i\rightarrow x$.
\end{prop}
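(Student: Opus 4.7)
The plan is to prove the cycle (1) $\Rightarrow$ (2) $\Rightarrow$ (3) $\Rightarrow$ (1), and then deduce the support statement from (2). The recurring technical device throughout is to replace characteristic functions of balls by Lipschitz cutoff functions whose centres can be slid along the sequence $x_i \to x$, so that uniform convergence of the cutoffs together with weak$^*$ convergence of the measures yields convergence of integrals.

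For (1) $\Rightarrow$ (2), I would fix $\varepsilon > 0$ and let $g \colon [0,\infty) \to [0,1]$ be Lipschitz, equal to $1$ on $[0, r-\varepsilon]$ and to $0$ on $[r, \infty)$. Set $f(y) := g(d(y,x))$ and $f_i(y) := g(d(y,x_i))$; both belong to $C_{bs}(X)$, and since $g$ is Lipschitz and $d(x_i, x) \to 0$, $f_i \to f$ uniformly. Weak$^*$ convergence gives $\int f \, d\mu_i \to \int f \, d\mu$. Local uniform boundedness of the $\mu_i$ on any bounded set (obtained by testing (1) against a continuous bump dominating that set) then upgrades this to $\int f_i \, d\mu_i \to \int f \, d\mu$. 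Since $f_i \le \chi_{U(x_i, r)}$ and $f \ge \chi_{U(x, r-\varepsilon)}$, we deduce $\mu(U(x, r - \varepsilon)) \le \liminf_i \mu_i(U(x_i, r))$; sending $\varepsilon \to 0$ and using inner continuity of $\mu$ gives (\ref{balsam}). The inequality (\ref{fir}) is entirely symmetric, using a cutoff equal to $1$ on $[0, r]$ and $0$ on $[r + \varepsilon, \infty)$. The implication (2) $\Rightarrow$ (3) is then immediate from the chain $\mu(U(x,r)) \le \liminf_i \mu_i(U(x_i, r)) \le \limsup_i \mu_i(B(x_i, r)) \le \mu(B(x, r))$, since the extremes coincide when $\mu(B(x,r) \setminus U(x, r)) = 0$.

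For (3) $\Rightarrow$ (1), fix a nonnegative $f \in C_{bs}(X)$ with $\supp f \subset B(x_0, R)$. The idea is to approximate $f$ uniformly by a simple function $\sum_j a_j \chi_{B_j}$, where the $B_j = B(z_j, r_j)$ are finitely many pairwise disjoint closed balls covering $\supp f$, of radius small enough that $f$ varies by at most $\varepsilon$ on each, and each with $\mu$-null boundary. Such radii exist because $\mu$ is finite on bounded sets, which forces $\mu(\partial B(z, r)) = 0$ for all but countably many $r$; disjointness is achieved by passing to differences, whose boundaries remain $\mu$-null. Applying (3) to each $B_j$ with constant sequence $z_j$ yields $\mu_i(B_j) \to \mu(B_j)$, whence linearity gives $\int \sum_j a_j \chi_{B_j} \, d\mu_i \to \int \sum_j a_j \chi_{B_j} \, d\mu$; the uniform approximation together with the bound $\sup_i \mu_i(\supp f) < \infty$ (obtained by (3) applied to one larger ball of null boundary containing $\supp f$) transfers this to $\int f \, d\mu_i \to \int f \, d\mu$. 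For the support statement: if $x \in \supp \mu$ then $\mu(U(x, 1/n)) > 0$ for every $n$, and applying (2) with the constant sequence $x$ gives $\mu_i(U(x, 1/n)) > 0$ for all large $i$; by separability the complement of $\supp \mu_i$ is open of zero measure, so one can select $y_i^{(n)} \in \supp \mu_i \cap U(x, 1/n)$, and a diagonal argument produces the desired $x_i \in \supp \mu_i$ with $x_i \to x$.

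The main obstacle is the approximation step in (3) $\Rightarrow$ (1): simultaneously arranging that the selected balls are pairwise disjoint, cover $\supp f$, have sufficiently small radii for the uniform $\varepsilon$-approximation of $f$, and have $\mu$-null boundaries. This can be handled by a finite selection argument, using total boundedness of $\supp f$ to reduce to a compact cover and the countability of bad radii at each base point, but the bookkeeping is the delicate point of the argument.
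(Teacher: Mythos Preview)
The paper does not supply its own proof of this proposition; it is quoted from \cite{book:maggi} (Proposition~4.26), so there is no argument in the text to compare against. That said, your implications (1)~$\Rightarrow$~(2) and (2)~$\Rightarrow$~(3), as well as the deduction of the ``moreover'' clause from (2), are correct and standard (the separability hypothesis you invoke for the support statement is harmless in the paper's setting, where the ambient spaces are complete and separable).

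The genuine gap is in your (3)~$\Rightarrow$~(1). You propose to approximate $f$ by $\sum_j a_j\chi_{B_j}$ with the $B_j$ \emph{pairwise disjoint} closed balls, and you write that ``disjointness is achieved by passing to differences, whose boundaries remain $\mu$-null''. But a difference $B_j\setminus\bigcup_{k<j}B_k$ is not a ball, so hypothesis~(3) gives you no information about $\mu_i$ on such a set; nor does knowing $\mu_i(B_j)\to\mu(B_j)$ for each $j$ individually let you control $\mu_i$ on intersections or differences without further input. In Maggi's Euclidean setting this step is carried out with half-open dyadic cubes, which genuinely partition space and to each of which a version of~(3) applies; no such partition by balls is available in a general metric space. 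To close the argument you would need either to first recover the Portmanteau inequalities for \emph{all} bounded open (resp.\ closed) sets from~(3)---which itself requires an approximation step beyond balls---or to route through (3)~$\Rightarrow$~(2) and then prove (2)~$\Rightarrow$~(1) directly via the cutoff technique you already used for (1)~$\Rightarrow$~(2). As written, the ``delicate point'' you flag is not merely bookkeeping: the proposed resolution does not go through.
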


In general it is not true that if there exists a sequence $\mu_i$ with $x_i\in \supp\mu_i$ such that $\mu_i\xrightharpoonup{*}\mu$ and $x_i\rightarrow x$ then $x\in\supp\mu$. However, we do have this for Ahlfors regular measures.
\begin{prop}\label{prop:properties_of_weak_con_ahlfors}
	Let $\mu_i\in\mathcal{A}^n(C)$ in a metric space $X$ be such that $\mu_i\xrightharpoonup{*}\mu$. Then $\mu\in\mathcal{A}^n(C)$ and if $x_i\in\supp\mu_i$ is such that $x_i\rightarrow x$, then $x\in \supp\mu$.
\end{prop}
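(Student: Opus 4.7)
My plan is to extract both conclusions directly from Proposition \ref{prop:properties_of_weak_con}, whose semicontinuity statements \eqref{balsam} and \eqref{fir}, together with its final clause on convergence of supports, provide exactly what is needed. The key observation is that the two directions of semicontinuity line up precisely with the two sides of the Ahlfors regularity bound, once a convergent sequence $x_i\to x$ with $x_i\in\supp\mu_i$ is in hand.

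I would first prove the support containment. Given $x_i\in\supp\mu_i$ with $x_i\to x$, applying \eqref{fir} yields
\[
\mu(B(x,r)) \;\ge\; \limsup_{i\to\infty}\mu_i(B(x_i,r)) \;\ge\; C^{-1}r^n
\]
for every $r>0$, where the last inequality uses Ahlfors regularity of $\mu_i$ at the support point $x_i$. Consequently every open neighbourhood of $x$ contains some $B(x,r)$ and so has positive $\mu$-measure, proving $x\in\supp\mu$.

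For the Ahlfors regularity of $\mu$, I would fix $x\in\supp\mu$ and invoke the final clause of Proposition \ref{prop:properties_of_weak_con} to obtain $x_i\in\supp\mu_i$ with $x_i\to x$; the argument of the previous paragraph then gives the lower bound $\mu(B(x,r))\ge C^{-1}r^n$ for free. For the upper bound, I would apply \eqref{balsam} together with the trivial estimate $\mu_i(U(x_i,r))\le\mu_i(B(x_i,r))\le Cr^n$ to deduce $\mu(U(x,r))\le Cr^n$, and then pass to the closed ball by continuity of measure from above: $\mu(B(x,r))=\lim_{r'\to r^+}\mu(U(x,r'))\le Cr^n$, the limit being finite because $\mu(U(x,r_0))\le Cr_0^n<\infty$ for any fixed $r_0>r$.

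The only mildly delicate step is the open-to-closed-ball passage in the upper bound, but it is standard and relies only on local finiteness of $\mu$, which is built into the definition of weak* convergence adopted here. Apart from this bookkeeping, there is no real obstacle — everything follows by a direct application of the semicontinuity properties already provided by Proposition \ref{prop:properties_of_weak_con}.
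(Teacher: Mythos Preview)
Your proposal is correct and follows essentially the same approach as the paper: both arguments derive the Ahlfors bounds and the support claim directly from the semicontinuity inequalities \eqref{balsam} and \eqref{fir} of Proposition \ref{prop:properties_of_weak_con}, together with its final clause producing a convergent sequence of support points. The only cosmetic differences are the order of the two claims and that for the support statement you apply \eqref{fir} directly with the moving sequence $x_i\to x$, whereas the paper applies it with the constant sequence and a ball-containment step; your variant is if anything slightly cleaner.
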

\begin{proof}
	By Proposition \ref{prop:properties_of_weak_con}, for any $x\in \supp\mu$ there exists a sequence $x_i\in \supp\mu_i$ such that $x_i\rightarrow x$. Thus for any $\varepsilon>0$ we have
	\begin{equation*}
		\mu(B(x,r))\leq \mu(U(x,r+\varepsilon))\leq \liminf_{i\rightarrow \infty}\mu_i(B(x_i,r+\varepsilon))\leq C(r+\varepsilon)^n
	\end{equation*}
	and
	\begin{equation*}
		\mu(B(x,r))\geq \limsup_{i\rightarrow \infty}\mu_i(B(x_i,r))\geq r^n/C.
	\end{equation*}
	Letting $\varepsilon\rightarrow 0$ demonstrates that $\mu\in \mathcal{A}^n(C)$. 
	 
	For the second claim, fix $r>0$ and let $i_0>0$ be sufficiently large that $B(x_i,r)\subset B(x,2r)$ for all $i\geq i_0$. Thus
	\begin{equation*}
		\mu(B(x,2r))\geq \limsup_{i\rightarrow \infty}\mu_i(B(x,2r))\geq \limsup_{i\rightarrow \infty}\mu_i(B(x_i,r))\geq r^n/C.
	\end{equation*}
	By the arbitrariness of $r$, we have demonstrated that $x\in\supp \mu$.
\end{proof}
As previously intimated, we will be working in terms of tangent measures as opposed to sets.
\begin{definition}\label{def:measuremetric}
	Let $x\in X$ and $\mu,\nu\in\mathcal{M}(X)$. For $L,\,r>0$ define
	\[F^{L,\,r}_x(\mu,\nu):=\sup\left\{\int f\,d(\mu-\nu):f\in\textnormal{Lip}_L(X;[-1,1])\,\textnormal{ with }\supp f\subset B(x,\,r) \right\}.\]
	Further, where the infimum exists let
	\[F_x(\mu,\nu):=\inf\left\{\varepsilon\in(0,1/2): F_x^{1/\varepsilon,1/\varepsilon}(\mu,\nu)<\varepsilon\right\}\]
	and if no such infimum exists, set $F_x(\mu,\nu)=1/2$. 
\end{definition}
%\begin{lemma}[Lemma 2.14 \cite{bate}]
%	$F_x$ is a metric on $\mathcal{M}(X)$. If $X$ is complete and separable then so is $(\mathcal{M}_{loc}(X),F_x)$.
%\end{lemma}
We have the following useful equivalence:
\begin{prop}[Proposition 2.13 \cite{bate}]\label{prop:Fequivweak*}
	Convergence in $F_x$ implies weak* convergence. If $X$ is complete and separable then the converse is also true.
\end{prop}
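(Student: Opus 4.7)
The forward implication should follow by approximating arbitrary test functions $f\in C_{bs}(X)$ by Lipschitz ones with bounded support. Given $f$ with $|f|\le M$ and $\supp f\subset B(x,R)$, I would first test $F_x$-convergence against a Lipschitz cutoff (valued in $[-1,1]$) that equals $1$ on $B(x,R)$ and vanishes outside $B(x,R+1)$, yielding $\sup_i \mu_i(B(x,R+1))<\infty$. Then, for any $\eta>0$, I would construct an $L$-Lipschitz approximant $g$ with $\supp g\subset B(x,R+1)$ satisfying $\|f-g\|_\infty<\eta$ on a set of large $\mu$-measure, using the inf-convolution $y\mapsto\inf_z\{f(z)+Ld(y,z)\}$ truncated by a Lipschitz cutoff. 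Applying $F_x$-convergence to $g/M\in\textnormal{Lip}_{L/M}(X;[-1,1])$ and combining with the uniform mass bound should give $\int f\,d\mu_i\to \int f\,d\mu$.

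For the converse, the plan is a contradiction argument. Suppose $\mu_i\xrightharpoonup{*}\mu$ but $F_x(\mu_i,\mu)\not\to 0$; after passing to a subsequence, there exist $\varepsilon>0$ and $f_i\in\textnormal{Lip}_{1/\varepsilon}(X;[-1,1])$ with $\supp f_i\subset B(x,1/\varepsilon)$ and $\left|\int f_i\,d(\mu_i-\mu)\right|\ge\varepsilon$. I would first use Proposition \ref{prop:properties_of_weak_con} to bound $\sup_i\mu_i(B(x,1/\varepsilon+1))$. Next, invoking Ulam's theorem---the key place where completeness and separability of $X$ enter---the finite Borel measure $\mu\llcorner B(x,1/\varepsilon+1)$ is tight, so there is a compact $K\subset B(x,1/\varepsilon+1)$ with $\mu\bigl(B(x,1/\varepsilon+1)\setminus K\bigr)$ arbitrarily small. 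The family $\{f_i|_K\}$ is uniformly bounded and equicontinuous, so Arzel\`a--Ascoli yields uniform convergence along a subsequence to a $1/\varepsilon$-Lipschitz function $f$ on $K$. A McShane extension of $f$ to $X$, multiplied by a Lipschitz cutoff, produces a test function $\tilde f\in C_{bs}(X)$; applying weak*-convergence of $\mu_i$ to $\tilde f$ should then contradict the separation $\left|\int f_i\,d(\mu_i-\mu)\right|\ge\varepsilon$.

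The main obstacle is transferring tightness from the limit $\mu$ to the sequence $\{\mu_i\}$: for the contradiction above to close, one needs $\mu_i\bigl(B(x,1/\varepsilon+1)\setminus K\bigr)$ to be uniformly small in $i$ as well, so that sup-norm approximation of $f_i$ by $\tilde f$ on $K$ genuinely controls $\left|\int(f_i-\tilde f)\,d\mu_i\right|$. Weak*-convergence gives only the one-sided bound $\limsup_i\mu_i(F)\le\mu(F)$ for closed sets $F$, which follows from Proposition \ref{prop:properties_of_weak_con}. My plan is to enlarge $K$ slightly to a compact set whose closed neighbourhood still has $\mu$-measure close to $\mu(K)$, and then apply the closed-set inequality to the complement of that neighbourhood inside $B(x,1/\varepsilon+1)$. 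This tightness-transfer step is the technical heart of the proof and is precisely where the hypothesis that $X$ is complete and separable is used in an essential way.
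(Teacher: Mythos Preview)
The paper does not prove this statement: it is quoted verbatim as Proposition 2.13 of \cite{bate} and used as a black box, so there is no in-paper argument to compare your proposal against.

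That said, your sketch is broadly reasonable but the forward direction has a soft spot. Inf-convolution $f_L(y)=\inf_z\{f(z)+Ld(y,z)\}$ converges to $f$ \emph{uniformly} only when $f$ is uniformly continuous; a generic $f\in C_{bs}(X)$ has bounded, not compact, support, so uniform continuity is not automatic (the paper imposes no properness hypothesis on $X$ for this direction). Your phrase ``$\|f-g\|_\infty<\eta$ on a set of large $\mu$-measure'' does not obviously transfer to the $\mu_i$, and without a uniform mass bound on the bad set for \emph{all} $\mu_i$ the estimate does not close. A cleaner route is to bypass approximation entirely: use $F_x$-convergence against the Lipschitz bump functions $\phi_{r,s}(y)=\max\{0,\min\{1,(s-d(x,y))/(s-r)\}\}$ to deduce the Portmanteau inequalities \eqref{balsam}--\eqref{fir} of Proposition~\ref{prop:properties_of_weak_con} directly, which is equivalent to weak* convergence.

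For the converse, your diagnosis is correct: the essential use of completeness and separability is Ulam tightness of $\mu$, and the delicate point is upgrading this to uniform tightness of the $\mu_i$. Your plan---replace $K$ by a slightly larger compact set and apply the closed-set upper-semicontinuity $\limsup_i\mu_i(C)\le\mu(C)$ to the complement---does work, and together with Arzel\`a--Ascoli and McShane extension yields the contradiction. This is essentially the standard argument.
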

We now introduce a measured version of $d_{pGH}$.
\begin{definition}
	For pointed metric measure spaces $(X,\mu,z)$ and $(Y,\nu,y)$, define
	\begin{multline*} d_{pmGH}((X,\mu,z),\,(Y,\nu,y))=\\
		\inf\left\{d_{H(z)}(X,Y)+F_z(\mu,\,\nu):\parbox{4.2cm}{\centering $\exists$ metric space $(Z,\,z)$ and isometric embeddings\\ $(X,\,x),(Y,\,y)\rightarrow(Z,\,z)$}\right\}.
	\end{multline*}
\end{definition}
This is a complete and separable metric on the set of isometry classes of pointed metric measure spaces $\mathbb{M}_{pm}$, see \cite[Corollary 4.6]{bate}. We are now ready to introduce the metric that will be used to define tangent measures in the next section.

\begin{definition}
	Let $(X,\mu,z)$ and $(Y,\nu,y)$ be pointed metric measure spaces and define
	\[d_*((X,\mu,z),\,(Y,\nu,y))=\inf\left\{F_z(\mu,\,\nu):\parbox{4.2cm}{\centering $\exists$ metric space $(Z,\,z)$ and isometric embeddings\\ $(X,\,x),(Y,\,y)\rightarrow(Z,\,z)$}\right\}.\]
\end{definition}
Let $\mathbb{M}_*$ be the set of pointed metric measure spaces under the equivalence relation 
\[(X,\mu,x)\sim (Y,\nu,y) \text{ if }(\supp\mu\cup \{x\},\mu,x)\textnormal{ is isometric to }(\supp\nu\cup\{y\},\nu,y).\]
Then as shown in \cite[Corollary 4.13]{bate}, $(\mathbb{M}_*,d_*)$ is a complete and separable metric space. In light of this, to simplify notation we will sometimes write $(\mu,x)$ in place of $(X,\mu,x)$.
 
The final piece of the puzzle will be to extract some relationship between the convergence of measures and the convergence of their supports. Although $d_*$ and $d_{pmGH}$ are not equivalent, we do have a strong relationship as outlined in the following key result.

\begin{theorem}[Theorem 4.11 \cite{bate}]\label{theorem:largesubsets}
	For $\varepsilon>0$, let $(X,\mu,x),(Y,\nu,y)\in\mathbb{M}_{*}$ be pointed metric measure spaces with $x\in\supp\mu$, $y\in \supp\nu$ and
	\[d_*((X,\mu,x),(Y,\nu,y))<\varepsilon.\]
	There exists compact
	$x\in K_\mu\subset\supp\mu\cap B(x,1/\varepsilon)$ and $y\in K_\nu \subset \supp\nu\cap B(y,1/\varepsilon)$
	such that 
	\begin{enumerate}[left=1cm, label=\textbf{M\arabic*.},ref=\textbf{M\arabic*.}]
		\item $\max\{\mu(B(x,1/\varepsilon)\backslash K_\mu),\nu(B(y,1/\varepsilon)\backslash K_\nu)\}<\varepsilon$;\label{largesubsets1}
		\item $d_{pmGH}((K_\mu,\mu\llcorner K_\mu,x),(K_\nu,\nu\llcorner K_\nu,y))<3\varepsilon$.\label{largesubsets2}
	\end{enumerate}
	Conversely, if there exist compact $K_\mu,\,K_\nu$ as above satisfying \ref{largesubsets1} with
	\[d_{pmGH}((K_\mu,\mu\llcorner K_\mu,x),(K_\nu,\nu\llcorner K_\nu,y))<\varepsilon\]
	then
	\[d_*((X,\mu,x),(Y,\nu,y))<3\varepsilon.\]
	That is, $d_*$ is bi-Lipschitz equivalent to the metric defined by taking the infimum over all $\varepsilon\in(0,1/2)$ such that \ref{largesubsets1} and \ref{largesubsets2} hold.
\end{theorem}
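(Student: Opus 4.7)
The plan is to convert between the flat-like metric $d_*$ and the Hausdorff-plus-flat metric $d_{pmGH}$ by excising small-measure bad sets (forward direction) and by gluing the witnessing ambient spaces (converse direction). Throughout I exploit that the definition of $d_*$ identifies $x,y$ with the common base point $z$ in any witnessing embedding, so in any such $(Z,z)$ we have $d(x,\supp\nu)=d(y,\supp\mu)=0$; in particular $x\in\overline{\supp\nu}$ and $y\in\overline{\supp\mu}$.

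For the forward direction, fix $(Z,z)$ realising $F_z^{1/\varepsilon,1/\varepsilon}(\mu,\nu)<\varepsilon$ and define the bad set
\[G_\mu:=\{p\in\supp\mu\cap B(x,1/\varepsilon-2\varepsilon):d(p,\supp\nu)>2\varepsilon\},\]
and analogously $G_\nu$. The key estimate $\mu(G_\mu)<\varepsilon$ follows by testing against $f(q):=\max\{0,1-d(q,G_\mu)/\varepsilon\}$, which is $(1/\varepsilon)$-Lipschitz, $[0,1]$-valued, supported in $B(z,1/\varepsilon)$, and identically zero on $\supp\nu$ (since $G_\mu$ sits at distance $>2\varepsilon$ from $\supp\nu$); hence $\mu(G_\mu)\leq\int f\,d\mu=\int f\,d(\mu-\nu)<\varepsilon$. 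Setting
\[K_\mu:=\overline{\supp\mu\cap B(x,1/\varepsilon)\setminus G_\mu},\qquad K_\nu:=\overline{\supp\nu\cap B(y,1/\varepsilon)\setminus G_\nu},\]
these are compact (closed and bounded in proper supports), contain $x,y$ (as $d(x,\supp\nu)=0$ prevents $x\in G_\mu$), and satisfy \textbf{M1}. For the Hausdorff part of \textbf{M2}, any $p\in K_\mu\cap B(z,1/(3\varepsilon))$ lies in $B(x,1/\varepsilon-2\varepsilon)$ (using $\varepsilon<1/2$, which forces $1/(3\varepsilon)+2\varepsilon<1/\varepsilon$) and outside $G_\mu$, so there is $q\in\supp\nu$ with $d(p,q)\leq 2\varepsilon$; the same bound verifies $q\in K_\nu$, giving Hausdorff closeness within $B(z,1/(3\varepsilon))$. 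The measure part of \textbf{M2} then follows because any test function admissible for the pertinent $F_z$ norm on $(\mu\llcorner K_\mu,\nu\llcorner K_\nu)$ is also admissible for $F_z^{1/\varepsilon,1/\varepsilon}(\mu,\nu)$, and truncating to $K_\mu,K_\nu$ costs at most $\mu(G_\mu)+\nu(G_\nu)<2\varepsilon$ on the test integral.

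For the converse direction, given a witnessing $(W,w)$ realising the closeness of $(K_\mu,K_\nu)$, glue to the full spaces by forming $Z:=(X\sqcup W\sqcup Y)/\!\sim$, identifying $K_\mu\subset X$ and $K_\nu\subset Y$ with their images in $W$ and metrising by the natural shortest-path distance through $W$ (which is a legitimate metric since $K_\mu,K_\nu$ embed isometrically in both $X$ or $Y$ and in $W$). Then for any $f$ admissible for $F_z^{1/(3\varepsilon),1/(3\varepsilon)}$, whose support lies in $B(z,1/(3\varepsilon))\subset B(z,1/\varepsilon)$,
\[\Big|\int f\,d(\mu-\nu)\Big|\leq\Big|\int f\,d(\mu\llcorner K_\mu-\nu\llcorner K_\nu)\Big|+\mu(B(x,1/\varepsilon)\setminus K_\mu)+\nu(B(y,1/\varepsilon)\setminus K_\nu)<3\varepsilon,\]
the three terms controlled respectively by $F_w<\varepsilon$ and by \textbf{M1}; hence $F_z(\mu,\nu)<3\varepsilon$ and $d_*((X,\mu,x),(Y,\nu,y))<3\varepsilon$.

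The main obstacle is the careful bookkeeping of the constant $3$: because $\varepsilon$ plays a dual role in $F_z$ as both Lipschitz constant and support radius, test functions cannot directly probe the boundary annulus near $\partial B(x,1/\varepsilon)$, so one can only verify Hausdorff closeness within the strictly smaller ball $B(z,1/(3\varepsilon))$, which forces the "$3$" in the statement. A secondary technical point is the compactness of $K_\mu,K_\nu$, which in the intended applications is handled by the asymptotic doubling of the supports (making them proper, so closed bounded subsets are compact).
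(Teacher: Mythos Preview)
The paper does not prove this theorem: it is quoted from \cite{bate} as Theorem 4.11 and used as a black box. So there is no ``paper's proof'' to compare against here. Your approach is the standard one for such statements---excise the set of points that fail to be close to the other support via a tent test function, then restrict; and for the converse, glue through the witnessing space---and it is essentially correct in spirit.

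That said, two points deserve attention. First, your bookkeeping does not actually deliver the constant $3$ in \textbf{M2}. You obtain $d_{H(z)}(K_\mu,K_\nu)\lesssim 2\varepsilon$ from the bad-set argument, but the measure estimate
\[
\left|\int f\,d(\mu\llcorner K_\mu-\nu\llcorner K_\nu)\right|\le \left|\int f\,d(\mu-\nu)\right|+\mu(G_\mu)+\nu(G_\nu)<\varepsilon+\varepsilon+\varepsilon=3\varepsilon
\]
only yields $F_z(\mu\llcorner K_\mu,\nu\llcorner K_\nu)\le 3\varepsilon$, so the sum $d_{H(z)}+F_z$ comes out near $5\varepsilon$, not $3\varepsilon$. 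This is harmless for the bi-Lipschitz equivalence claimed in the last sentence of the theorem, but it does not reproduce the stated constant. To hit $3\varepsilon$ one must either sharpen the bad-set threshold or organise the two pieces of $d_{pmGH}$ more carefully (as is done in the source paper). Second, your $K_\mu,K_\nu$ are closed bounded subsets of $\supp\mu$, $\supp\nu$, and you invoke properness to get compactness; but elements of $\mathbb{M}_*$ are only assumed complete and separable, not proper. In general one instead appeals to inner regularity of $\mu$ (a locally finite Borel measure on a Polish space) to produce compact $K_\mu$ with $\mu(B(x,1/\varepsilon)\setminus K_\mu)$ small, and then intersects with the complement of the bad set. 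Your converse direction via gluing is fine.
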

We conclude this section with the following proposition that allows us to work entirely in some common space $(Z,z)$.
\begin{prop}[Proposition 2.20 \cite{bate}]\label{prop:embeddingsandconvergence}
	Let $\{(X_i,\mu_i,x_i)\}_i$ be a sequence of pointed metric measure spaces. We have that
	\[d_*((X_i,\mu_i,x_i),(X,\mu,x))\rightarrow 0,\quad \text{respectively}\quad d_{pmGH}((X_i,\mu_i,x_i),(X,\mu,x))\rightarrow 0,\]
	for some pointed metric measure space $(X,\mu,x)$ if and only if there exists a complete and separable pointed metric space $(Z,z)$ and isometric embeddings $(X_i,x_i)\rightarrow (Z,z)$ and $(X,x)\rightarrow(Z,z)$ such that
	\[F_z(\mu_i,\mu)\rightarrow 0,\qquad\text{or respectively}\qquad d_{H(z)}(X,Y)+F_z(\mu,\,\nu)\rightarrow 0.\]
\end{prop}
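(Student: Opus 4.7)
The statement has two directions, so I would treat them separately. The ``if'' direction is essentially automatic: if one has a common ambient $(Z,z)$ with isometric embeddings in which either $F_z(\mu_i,\mu)$ or $d_{H(z)}(X_i,X)+F_z(\mu_i,\mu)$ tends to zero, then because $d_*$ and $d_{pmGH}$ are defined as infima over all such pairs of embeddings, we immediately obtain $d_*((X_i,\mu_i,x_i),(X,\mu,x))\leq F_z(\mu_i,\mu)$ and the analogous inequality for $d_{pmGH}$, forcing convergence.

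For the ``only if'' direction, the plan is a standard gluing construction. From $d_*((X_i,\mu_i,x_i),(X,\mu,x))\to 0$ I would pick $\varepsilon_i\searrow 0$ with $d_*<\varepsilon_i$, and for each $i$ select a metric space $(Z_i,z_i)$ with isometric embeddings $\iota_i\colon(X_i,x_i)\hookrightarrow(Z_i,z_i)$ and $\kappa_i\colon(X,x)\hookrightarrow(Z_i,z_i)$ realising $F_{z_i}(\iota_{i\#}\mu_i,\kappa_{i\#}\mu)<\varepsilon_i$, taking without loss of generality $Z_i=\iota_i(X_i)\cup\kappa_i(X)$. I then glue these along the common copy of $X$: on $W:=X\sqcup\bigsqcup_i X_i$ I define a pseudometric that agrees with the original metric on each summand, equals $d_{Z_i}(\iota_i(a),\kappa_i(b))$ for $a\in X_i$ and $b\in X$, and equals $\inf_{c\in X}\bigl(d_{Z_i}(\iota_i(a),\kappa_i(c))+d_{Z_j}(\iota_j(b),\kappa_j(c))\bigr)$ for $a\in X_i,\,b\in X_j$ with $i\neq j$. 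A routine check using the triangle inequality in each $Z_i$ and an exchange argument on the cross terms confirms this is a pseudometric; quotienting by zero distance and completing yields a complete separable pointed metric space $(Z,z)$ into which each $X$ and $X_i$ embed isometrically via the natural maps, with the basepoint $z$ being the common image of $x$ and the $x_i$'s.

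It then remains to verify that $F_z(\mu_i,\mu)\to 0$ in this common space, and I expect this to be the main obstacle. Any test function $f\in\mathrm{Lip}_L(Z;[-1,1])$ with $\supp f\subset B(z,r)$ restricts to an $L$-Lipschitz function on $\iota_i(X_i)\cup\kappa_i(X)$, which by the gluing construction is isometric to a subset of $Z_i$; after extending by the McShane lemma, truncating into $[-1,1]$, and multiplying by a Lipschitz cutoff that is $1$ on $B(z_i,r)$ and vanishes outside $B(z_i,r+1)$, I obtain an admissible test function for $F_{z_i}^{L',r'}$ at slightly worse parameters. Since $\mu_i$ and $\mu$ charge only the images of $X_i$ and $X$, their integrals on $Z$ equal the corresponding integrals on $Z_i$, yielding $F_z(\mu_i,\mu)\lesssim F_{z_i}(\iota_{i\#}\mu_i,\kappa_{i\#}\mu)<\varepsilon_i$. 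The hardest bookkeeping is controlling how the parameters $L$ and $r$ degrade under cutoff, but any universal loss is harmless because $\varepsilon_i$ may be chosen to tend to zero arbitrarily fast. The $d_{pmGH}$ case follows in parallel, with the extra observation that the pointed Hausdorff distance $d_{H(z)}(\iota_i(X_i),\kappa_i(X))$ inside $Z$ coincides by construction with its value inside $Z_i$, and hence also tends to zero by hypothesis.
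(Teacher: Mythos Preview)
The paper does not supply its own proof of this proposition; it is quoted verbatim from \cite{bate} (Proposition~2.20 there) and used as a black box throughout. There is therefore nothing in the present paper to compare your argument against.

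That said, your outline is the standard one and is correct. The ``if'' direction is indeed immediate from the definitions. For the ``only if'' direction, the gluing-along-$X$ construction you describe is exactly how such realisation results are established. One small simplification is available: since you already take $Z_i=\iota_i(X_i)\cup\kappa_i(X)$, the subspace $X_i\cup X$ of your glued space $Z$ is, by construction of the glued metric, \emph{isometric} to $Z_i$. Hence any test function $f\in\mathrm{Lip}_L(Z;[-1,1])$ with $\supp f\subset B(z,r)$ restricts to an admissible test function on $Z_i$ with the \emph{same} Lipschitz constant and support radius; no McShane extension or cutoff is required, and you obtain $F_z^{L,r}(\mu_i,\mu)\le F_{z_i}^{L,r}(\mu_i,\mu)$ and thus $F_z(\mu_i,\mu)\le F_{z_i}(\mu_i,\mu)<\varepsilon_i$ with no parameter loss at all. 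The pointed Hausdorff term in the $d_{pmGH}$ case transfers for the identical reason.
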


%\begin{prop}\label{prop:pseudometric}
%	For any $a,b>0$, $G_{a,b}$ is a complete pseudometric on the class of pointed metric measure spaces. Moreover,
%	\[G_{a,b}((X_i,\mu_i,x_i),(X,\mu,x))\rightarrow 0\]
%	if and only if there exists a complete and separable pointed metric space $(Z,z)$ and isometric embeddings $(X_i,x_i)\rightarrow(Z,z)$ and $(X,x)\rightarrow(Z,z)$ such that
%	\[aH_z(X_i,X)+bF_z(\mu_i,\mu)\rightarrow 0.\]
%\end{prop}

	\subsection{Tangents of metric measure spaces}\label{metrictangents}
	Tangent measures were first introduced by Preiss \cite{paper:preiss} in Euclidean space as a natural setting in which to explore questions around density. We refer the reader to \cite[Page 184]{book:mattila} for an introduction to tangent measures. With the notion of convergence of metric measure spaces established in the previous section, the definition of tangent measures in metric spaces follows analogously. First, let $(\mu,d,x)\in \mathbb{M}_*$ with $x\in\supp\mu$ and for $r>0$, define
\[T_r(\mu,d,x):=\left(\frac{\mu}{\mu(B(x,r))},\frac{d}{r},x\right).\]
\begin{definition}[Tangent measure]
	For $(\mu,d,x)\in \mathbb{M}_*$, we say that $(\nu,\rho,y)\in \mathbb{M}_*$ is a tangent measure if there exists a sequence $\{r_i\}$ with $r_i\rightarrow 0$ such that
	\[d_*(T_{r_i}(\mu,d,x),(\nu,\rho,y))\rightarrow 0 \qquad \text{as }i\rightarrow \infty.\]
	The set of all tangent measures for $(\mu,d,x)$ is denoted $\textnormal{Tan}(\mu,d,x)$.
\end{definition}
Note that this definition of a tangent measure allows for different sequences of radii to produce different tangents - in a sense allowing the tangents to ``rotate''. A basic example is that of a spiral in the plane that converges very quickly in the radial direction. In this case, the set of tangent measures at the origin consists of all half lines, equipped with Lebesgue measure.

\begin{corollary}[Corollary 4.7 \cite{bate}]\label{cor:restrict}
	Let $(\mu,d,x)\in\mathbb{M}_*$ and let $S\subset\supp\mu$ be $\mu$-measurable with $x$ a Lebesgue density point of $S$. Then
	\[\textnormal{Tan}(\mu\llcorner S,x)=\textnormal{Tan}(\mu,x).\]
\end{corollary}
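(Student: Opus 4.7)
The plan is to establish both inclusions simultaneously by directly bounding $d_*(T_r(\mu, x), T_r(\mu \llcorner S, x))$ and showing that it tends to $0$ along every sequence $r_i \to 0$ for which either $T_{r_i}(\mu, x)$ or $T_{r_i}(\mu \llcorner S, x)$ converges in $d_*$. The triangle inequality then immediately transfers any subsequential limit of one side to a subsequential limit of the other, yielding $\textnormal{Tan}(\mu, x) = \textnormal{Tan}(\mu \llcorner S, x)$.

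Since both rescaled measures already sit inside the common ambient $(X, d/r, x)$, the infimum in the definition of $d_*$ is bounded above by $F_x$ computed in $X$ itself. Abbreviate $a_r = \mu(B(x, r))$ and $b_r = \mu(B(x, r) \cap S)$. Testing against any admissible $f$ supported in $B_d(x, Rr)$, a short algebraic rearrangement using $|f| \leq 1$ and $\mu \llcorner S \leq \mu$ yields
\[
\Big| \int f \, d T_r(\mu, x) - \int f \, d T_r(\mu \llcorner S, x) \Big| \leq \frac{\mu(B(x, Rr) \setminus S)}{a_r} + \frac{\mu(B(x, Rr))}{b_r} \cdot \frac{a_r - b_r}{a_r}.
\]
Factoring out $\mu(B(x, Rr))/a_r$ and invoking the Lebesgue density hypothesis $\mu(B(x, \rho r) \setminus S)/\mu(B(x, \rho r)) \to 0$ at scales $\rho \in \{1, R\}$, this is at most $\mu(B(x, Rr))/a_r$ times a factor tending to $0$ as $r \to 0$.

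What remains is to control the ratio $\mu(B(x, Rr))/a_r$ along the relevant subsequence. If $T_{r_i}(\mu, x) \to \nu$ in $d_*$, embedding everything in a common ambient space via Proposition \ref{prop:embeddingsandconvergence} and applying the weak-$*$ inequalities of Proposition \ref{prop:properties_of_weak_con} give $\limsup_i \mu(B(x, Rr_i))/a_{r_i} \leq \nu(B(x, R)) < \infty$, so the preceding estimate forces $F_x(T_{r_i}(\mu, x), T_{r_i}(\mu \llcorner S, x)) \to 0$ and hence $T_{r_i}(\mu \llcorner S, x) \to \nu$. Conversely, if $T_{r_i}(\mu \llcorner S, x) \to \sigma$, the same weak-$*$ argument bounds $\mu(B(x, Rr_i) \cap S)/b_{r_i}$, and the density condition $\mu(B(x, \rho r_i))/\mu(B(x, \rho r_i) \cap S) \to 1$ at scales $\rho \in \{1, R\}$ transfers this bound to $\mu(B(x, Rr_i))/a_{r_i}$; the estimate then gives $T_{r_i}(\mu, x) \to \sigma$.

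The main subtlety (rather than a genuine obstacle) is this last ratio: without a priori asymptotic doubling of $\mu$ at $x$, $\mu(B(x, Rr))/\mu(B(x, r))$ need not remain bounded uniformly as $r \to 0$, so one cannot hope for unconditional convergence of $d_*(T_r(\mu, x), T_r(\mu \llcorner S, x))$ to $0$. The remedy, as described above, is to extract the needed boundedness on the fly from the weak-$*$ convergence of whichever side is assumed to converge.
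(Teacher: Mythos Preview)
The paper does not give its own proof of this statement; it is quoted directly as Corollary 4.7 of \cite{bate}. Your argument is essentially the standard one and is correct: embedding both rescalings in the common ambient $(X,d/r,x)$ bounds $d_*$ by $F_x$, your displayed inequality is the right splitting of $\int f\,d(\mu/a_r-\mu\llcorner S/b_r)$, and the Lebesgue density hypothesis kills both error terms once the ``doubling-type'' ratio $\mu(B(x,Rr))/\mu(B(x,r))$ is controlled. Your handling of that ratio---extracting boundedness along the convergent subsequence via Proposition~\ref{prop:embeddingsandconvergence} and the closed-ball upper semicontinuity \eqref{fir}, using that any tangent $\nu$ lies in $\mathcal{M}_{\text{loc}}$ so $\nu(B(y,R))<\infty$---is exactly the right move, and the transfer between the $\mu$-ratio and the $\mu\llcorner S$-ratio in the converse direction via the density condition is sound. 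One small remark: since your bound only uses $|f|\le 1$ and not the Lipschitz constant, you are really showing $F_x^{L,R}\to 0$ for each fixed $R$ uniformly in $L$, which is more than enough to force $F_x\to 0$.
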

We also have the following compactness result.
\begin{prop}[Proposition 5.3 \cite{bate}]\label{prop:tancompact}
	Let $X$ be a metric space and let $\mu\in\mathcal{M}_{loc}(X)$ be asymptotically doubling. For $\mu$-a.e. $x\in\supp\mu$ and every $r_0>0,$
	\[\{T_r(\mu,x):0<r<r_0\} \textnormal{ is pre-compact.}\]
	In particular, for any $x\in\supp\mu$ satisfying the above, $\textnormal{Tan}(\mu,x)$ is a non-empty compact metric space when equipped with $d_*$ and 
	\[\forall\, \delta>0,\exists\, r_x>0:d_*(T_r(\mu,x),\textnormal{Tan}(\mu,x))\leq \delta\,\forall\, r\in (0,r_x).\]
\end{prop}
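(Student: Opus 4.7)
The plan is to establish precompactness of the family $\{T_r(\mu, x) : 0 < r < r_0\}$ in $(\mathbb{M}_*, d_*)$ first, and then extract the nonemptiness, compactness, and Hausdorff-approximation statements as routine consequences. Among sequences $r_i \in (0, r_0)$, any subsequence converging to some $r \in (0, r_0]$ is easy (the rescaling is continuous in $r$ at any positive value), so the substantive case is $r_i \to 0$.

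The first step is to produce uniform doubling and mass bounds. By the asymptotic doubling hypothesis, for $\mu$-a.e.\ $x \in \supp \mu$ there exist $C_x \geq 1$ and $\rho_x > 0$ with $\mu(B(x, 2r)) \leq C_x \mu(B(x, r))$ for every $r \leq \rho_x$; this set of good $x$ is measurable and of full measure because it is a countable union over $(C, \rho)$ ranging in a countable dense family. Fixing such an $x$ and iterating gives $\mu(B(x, Rr)) \leq K(R) \mu(B(x, r))$ for every sufficiently small $r$, with $K(R) = C_x^{\lceil \log_2 R \rceil + 1}$. In terms of the rescaled measure $\nu_r := \mu/\mu(B(x, r))$ in the metric $d/r$, this becomes the uniform estimate $\nu_r(B_{d/r}(x, R)) \leq K(R)$.

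The core of the argument is the precompactness step. Given $r_i \to 0$, the uniform doubling of $\nu_{r_i}$ on bounded scales forces the pointed metric spaces $(\supp \mu, d/r_i, x)$ to be \emph{uniformly} totally bounded on closed balls of each fixed radius $R$: the number of $\varepsilon$-balls needed to cover $B_{d/r_i}(x, R)$ is bounded in terms of $C_x$, $R$, and $\varepsilon$ alone. Classical proper Gromov precompactness, followed by a diagonal over $R \in \mathbb{N}$, then yields a subsequence along which these pointed balls converge in $d_{pGH}$ to a proper pointed limit $(Z, z)$, together with isometric embeddings into a common ambient space exhibiting $d_{H(z)}$-convergence on each bounded ball. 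The uniform mass bound subsequently enables weak* compactness on $\mathcal{M}_{loc}(Z)$ to deliver a further subsequence with $\nu_{r_{i_k}} \xrightharpoonup{*} \nu$; Proposition \ref{prop:Fequivweak*} upgrades this to $F_z$-convergence, and Proposition \ref{prop:embeddingsandconvergence} then repackages it as $d_*$-convergence of $T_{r_{i_k}}(\mu, x)$. This is the technical heart of the proof and the main obstacle: coupling metric Gromov-Hausdorff convergence with weak* convergence of measures in a way that respects $d_*$. Without the translations supplied by Propositions \ref{prop:Fequivweak*} and \ref{prop:embeddingsandconvergence}, and behind them Theorem \ref{theorem:largesubsets}, one would have to analyse $d_*$ directly between rescalings, which is considerably more awkward.

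With precompactness established, $\textnormal{Tan}(\mu, x)$ is immediately nonempty. I would verify closedness by diagonalisation: if $(\nu^k, y^k) \in \textnormal{Tan}(\mu, x)$ converges to $(\nu, y)$, then picking radii $r^k_i \to 0$ (as $i \to \infty$) realising each $(\nu^k, y^k)$ and then choosing $i(k)$ so that $d_*(T_{r^k_{i(k)}}(\mu, x), (\nu^k, y^k)) < 1/k$ produces a single sequence $r_j := r^j_{i(j)} \to 0$ with $T_{r_j}(\mu, x) \to (\nu, y)$, whence $(\nu, y) \in \textnormal{Tan}(\mu, x)$. Completeness of $(\mathbb{M}_*, d_*)$ then forces $\textnormal{Tan}(\mu, x)$ to be compact. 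Finally the Hausdorff-approximation statement follows by contradiction: were some $\delta > 0$ to admit no corresponding $r_x$, there would be $r_i \to 0$ with $d_*(T_{r_i}(\mu, x), \textnormal{Tan}(\mu, x)) > \delta$, yet any subsequential $d_*$-limit of $T_{r_i}(\mu, x)$ lies in $\textnormal{Tan}(\mu, x)$ by definition, contradicting the separation.
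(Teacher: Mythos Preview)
The paper does not prove this proposition; it is quoted from \cite{bate} as a black box, so there is no in-paper argument to compare against and your sketch must stand on its own.

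The architecture is right but there is a genuine gap at the Gromov step. You claim that doubling of $\mu$ at the centre $x$ forces $\supp\mu\cap B_{d/r}(x,R)$ to be uniformly totally bounded, with covering numbers controlled by $C_x$, $R$, $\varepsilon$ alone. This is false: a packing argument needs a lower bound on $\mu(B(y,\varepsilon r))$ for \emph{every} $y$ in the ball, whereas the estimate you derived only controls balls centred at $x$. For a concrete obstruction, in $\ell_\infty$ take $\mu=\delta_0+\sum_{k,n\ge1}2^{-k^2-n}\delta_{2^{-k}e_n}$. This measure is asymptotically doubling at every point of its support, yet $\supp\mu\cap B(0,2^{-k})$ contains the $2^{-k}$-separated family $\{2^{-k}e_n\}_{n\ge1}$, so after rescaling by $2^{-k}$ the unit ball about $0$ carries infinitely many $1$-separated points and Gromov precompactness of the supports fails outright. (In this example $T_r(\mu,0)$ does converge in $d_*$, to a Dirac mass, so the conclusion survives even though your route to it does not.)

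The repair is to exploit the slack in $d_*$ via Theorem~\ref{theorem:largesubsets} rather than chase GH control of the full support. Decompose $\supp\mu$, up to a null set, into Borel pieces $S_j$ on which the doubling constant and scale are \emph{uniform}; by Theorem~\ref{lebesguepoints}, $\mu$-a.e.\ $x$ is a density point of the piece containing it. The packing argument then runs legitimately inside $S_j$, since uniform doubling at each $y\in S_j$ supplies the missing lower bound on $\mu(B(y,\varepsilon r))$ in terms of $\mu(B(x,Rr))$. This produces uniformly totally bounded compacta $K_r\subset B(x,Rr)$ carrying all but an asymptotically negligible fraction of the mass; Gromov and weak* compactness apply to $(K_r,\nu_r|_{K_r},x)$, and Theorem~\ref{theorem:largesubsets} transfers the conclusion back to $d_*$-precompactness of $T_r(\mu,x)$. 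The remainder of your sketch (nonemptiness, closedness by diagonalisation, the final contradiction for the approximation statement) is fine once this is in place.
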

Due to the rescaling of the metric in the definition and the nature of embeddings we can derive the fact that the tangent space is closed under rescaling of metrics and translation of points. 
\begin{lemma}\label{lemma:rescalemetrics}
	Let $(X,d,\mu)$ be a metric measure space such that $\mu$ is asymptotically doubling.
	\begin{enumerate}[left=1cm, label= \textbf{(\arabic*)}, ref=Lemma \ref{lemma:rescalemetrics}\textbf{(\arabic*)}]
		\item  For $\mu$-a.e. $x\in\supp\mu$ and $s>0$, there exists $C>0$ such that
		\[(\nu,\rho,y)\in \textnormal{Tan}(\mu,x) \quad \implies \quad (C\nu,s\rho,y)\in\textnormal{Tan}(\mu,x).\]
		\label{lemma:rescalemetrics1}
		\item  For $\mu$-a.e. $x\in\supp\mu$,
		\[(\nu,y)\in \textnormal{Tan}(\mu,x) \textnormal{ and } z\in\supp\nu \quad \implies \quad (\nu,z)\in\textnormal{Tan}(\mu,x).\] \label{lemma:rescalemetrics2}
	\end{enumerate}
\end{lemma}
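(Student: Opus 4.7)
Both parts reduce to $x$ in the $\mu$-full measure set where $\mu$ is asymptotically doubling (Remark~\ref{remark:densedouble}) and $\{T_r(\mu,x):0<r<r_0\}$ is $d_*$-precompact (Proposition~\ref{prop:tancompact}); I will tacitly restrict to such $x$ throughout.

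For \textbf{(1)}, my starting point is the identity
\[
T_{r/s}(\mu,d,x)=\Bigl(c(r,s)\,\tilde\mu_r,\; s\,\tilde d_r,\; x\Bigr),\qquad c(r,s):=\frac{\mu(B(x,r))}{\mu(B(x,r/s))},
\]
where $\tilde\mu_r=\mu/\mu(B(x,r))$ and $\tilde d_r=d/r$ are the measure and metric of $T_r(\mu,d,x)$; this rewrites $T_{r/s}$ as $T_r$ with the metric rescaled by $s$ and the measure by the scalar $c(r,s)$. Given a witnessing sequence $r_i\to 0$ for $(\nu,\rho,y)\in\textnormal{Tan}(\mu,x)$, asymptotic doubling at $x$ (iterated finitely many times to handle any fixed $s>0$) gives uniform two-sided bounds on $c(r_i,s)$, so after passing to a subsequence $c(r_i,s)\to C$ for some $C>0$. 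Direct inspection of how $F_z^{L,R}$ transforms under rescaling of the metric by $s$ (it becomes $F_z^{Ls,R/s}$) and under rescaling of the measure by a convergent scalar shows that the joint rescaling $(d,\mu)\mapsto(sd,c\mu)$ is $d_*$-continuous; applied to the convergent sequence this yields $T_{r_i/s}(\mu,d,x)\to(C\nu,s\rho,y)$ in $d_*$.

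For \textbf{(2)}, fix $(\nu,\rho,y)\in\textnormal{Tan}(\mu,x)$ with witnessing sequence $r_i\to0$ and $z\in\supp\nu$. Use Proposition~\ref{prop:embeddingsandconvergence} to place everything in a single common proper pointed space $(Z,z_0)$ with isometric embeddings $\iota_i:T_{r_i}(\mu,x)\to Z$ and $\iota:(\nu,y)\to Z$ satisfying $\iota_i(x)=\iota(y)=z_0$ and $F_{z_0}(\iota_{i\#}T_{r_i},\iota_\#\nu)\to0$. Proposition~\ref{prop:properties_of_weak_con} applied at $z\in\supp\iota_\#\nu$ supplies $\hat y_i\in\supp\mu$ with $\iota_i(\hat y_i)\to\iota(z)=:z_1$, so that $d(x,\hat y_i)/r_i\to\rho(y,z)$. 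Apply the forward direction of Theorem~\ref{theorem:largesubsets} to the original convergence to obtain compact subsets with $d_{pmGH}$-close restrictions around $(x,y)$; since $\rho(y,z)<\infty$ is fixed, enlarging the \textbf{M1} radii by $\rho(y,z)+1$ keeps \textbf{M1} valid at the shifted base-points $(\hat y_i,z)$, and the correspondence inherited from the original remains $d_{pmGH}$-close about $(\hat y_i,z)$ up to the vanishing error $d_Z(\iota_i(\hat y_i),z_1)$. The converse direction of Theorem~\ref{theorem:largesubsets} then yields
\[
d_*\bigl((X,\mu/\mu(B(x,r_i)),d/r_i,\hat y_i),\,(\nu,\rho,z)\bigr)\to 0.
\]

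To upgrade this base-point-shifted convergence to $(\nu,\rho,z)\in\textnormal{Tan}(\mu,x)$, I would combine it with a diagonal argument: for each $j$, select $i=i(j)$ so the above $d_*$-distance is at most $1/j$ and a radius $r'_j$ comparable to $r_{i(j)}$ such that $T_{r'_j}(\mu,x)$ approximates the shifted space, using asymptotic doubling of $\mu$ at $x$ to control the ratio between the two normalizations and part \textbf{(1)} to absorb the residual scalar factors. This upgrade---producing a genuine rescaling based at $x$ that converges to $(\nu,\rho,z)$, as opposed to the easier base-point-shifted rescaling yielded by Theorem~\ref{theorem:largesubsets}---is the main technical obstacle of the proof.
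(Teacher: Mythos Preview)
For part \textbf{(1)} your argument matches the paper's: both rewrite $T_{r/s}(\mu,d,x)$ as a scalar-and-metric rescaling of $T_r(\mu,d,x)$, extract the limiting constant $C$ from the ratios $\mu(B(x,r_i))/\mu(B(x,r_i/s))$ via asymptotic doubling along a subsequence, and then verify convergence in $d_*$. The paper does the last step by explicitly pre- and post-scaling the embeddings supplied by Proposition~\ref{prop:embeddingsandconvergence}; you do it by noting how $F_z^{L,R}$ transforms under rescaling. These are the same proof.

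For part \textbf{(2)} the paper does not give an argument at all: it simply cites \cite[Proposition~5.5]{bate}. Your attempt, by contrast, has a genuine gap precisely where you flag ``the main technical obstacle''. You correctly establish
\[
d_*\bigl((X,\mu/\mu(B(x,r_i)),d/r_i,\hat y_i),\,(\nu,\rho,z)\bigr)\to 0,
\]
but the sketched diagonal upgrade to convergence of $T_{r'_j}(\mu,x)$ cannot work as written. The two pointed spaces $(X,\mu/\mu(B(x,r)),d/r,x)$ and $(X,\mu/\mu(B(x,r)),d/r,\hat y_i)$ differ only in base point, and since $d(x,\hat y_i)/r_i\to\rho(y,z)>0$ they remain a fixed positive $d_*$-distance apart for all large $i$. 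No choice of radius $r'_j$ ``comparable to $r_{i(j)}$'' and no absorption of scalar factors via part \textbf{(1)} moves the base point; you would need an isometry of $(\supp\mu,d/r_i)$ taking $x$ near $\hat y_i$, which there is no reason to expect.

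The actual argument (Preiss's, adapted to the metric setting in the cited reference) is not pointwise: one shows the set of $x$ where the shift property fails is $\mu$-null. If it had positive measure, then for each such bad $x$ one finds, as you did, points $\hat y_i\to x$ with $d_*(T_{r_i}(\mu,\hat y_i),(\nu,z))\to 0$ while $d_*(T_r(\mu,x),(\nu,z))$ stays bounded below for all small $r$. Using separability of $(\mathbb{M}_*,d_*)$ to fix a single target $(\nu,z)$ on a positive-measure subset, one then runs a Vitali/Lebesgue-density argument (Theorem~\ref{lebesguepoints}) to produce a density point $x$ of the bad set that is simultaneously approximated by the $\hat y_i$, forcing $d_*(T_{r_i}(\mu,x),(\nu,z))$ to be small after all. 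This measure-theoretic step is essential and is absent from your sketch.
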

\begin{proof}
	Proof of \textbf{(2)} may be found in \cite[Proposition 5.5]{bate} so we just present a proof of \textbf{(1)}. If $(\nu,\rho,y)\in\textnormal{Tan}(\mu,x)$ then there exists $r_i\rightarrow 0$ such that \[d_*(T_{r_i}(\mu,d,x),(\nu,\rho,y))\rightarrow 0.\] By Proposition \ref{prop:embeddingsandconvergence} there exists a complete and separable metric space $(Z,\lambda,z)$ and  isometries
	\[\left(\supp\frac{\mu}{\mu(B_d(x,r_i))},\frac{d}{r_i},x\right),(\supp\nu,\rho,y)\rightarrow (Z,\lambda,z)\] 
	such that
	\begin{equation*}
		F_z\left(\frac{\mu}{\mu(B_d(x,r_i))},\nu\right)\rightarrow 0.
	\end{equation*}
	Let
	\[C_i= \frac{\mu(B_{d}(x,r_i))}{\mu(B_{sd}(x,r_i))}.\]
	Since $\mu$ is asymptotically doubling we can set (up to subsequence)
	\[C=\lim_{i\to\infty}C_i>0.\]
	Consider now the sequence $r_i/s\rightarrow 0$ and note that
	\begin{align*}
		T_{r_i/s}(\mu,d,x)&=\left(\frac{\mu}{\mu(B_d(x,r_i/s))},\frac{sd}{r_i},x\right)\\
		&=\left(\frac{\mu}{\mu(B_{sd}(x,r_i))},\frac{sd}{r_i},x\right)\\
		&=T_{r_i}(\mu,sd,x).
	\end{align*}
	Here the subscript denotes the metric that defines the ball.
	By pre and post-scaling the previous isometries by $s$, we can construct new isometries 
	\[\left(\supp\frac{\mu}{\mu(B_{sd}(x,r_i))},\frac{sd}{r_i},x\right),(\supp\nu,s\rho,y)\rightarrow (Z,s\lambda,z).\] 
	We will write $\mu^s$ and $\nu^s$ for the pushforward of $\mu,\,\nu$ under these isometries in $(Z,s\lambda)$
	and scaling $\nu^s$ we have,
	\begin{align*}
		d_*\left(T_{r_i}(\mu,sd,x),\left(C\nu,s\rho,y\right)\right)&\leq F_z\left(\frac{\mu^s}{\mu(B_{sd}(x,r_i))},C\nu^{s}\right)\\
		&\leq F_z\left(\frac{\mu^s}{\mu(B_{sd}(x,r_i))},C_i\nu^{s}\right)+F_z\left(C_i\nu^{s},C\nu^{s}\right)\\
		&\leq F_z\left(\frac{\mu^s}{\mu(B_{sd}(x,r_i))},C_i\nu^{s}\right)+|C-C_i|.
	\end{align*}
	Recall that by Proposition \ref{prop:Fequivweak*} we have that convergence in $F_z$ is equivalent to weak* convergence. Thus for any $f\in C_{bs}(Z)$,
	 \begin{align*}
		\int f(z)\, d\,\left(\frac{\mu^s}{\mu(B_{sd}(x,r_i))}-C_i\nu^{s}\right)(z)=C_i\int f(sz)\, d\,\left(\frac{\mu}{\mu(B_{d}(x,r_i))}-\nu\right)(z)\rightarrow 0
	\end{align*}
	which combined with $|C-C_i|\to 0$ concludes the proof.
\end{proof}
The upper and lower density of the measure at a point has implications on the regularity of tangents at that point.
\begin{lemma}\label{lemma:ahlforsregular}
	For $(\mu, x)\in\mathbb{M}_*$ with $0<\Theta^s_*(\mu,x),\,\Theta^{s*}(\mu,x)<\infty$, we have that
	\[\frac{\Theta^s_*(\mu,x)}{\Theta^{s*}(\mu,x)} r^s\leq \nu(B(y,r))\leq \frac{\Theta^{s*}(\mu,x)}{\Theta^s_*(\mu,x)}  r^s\qquad\forall (\nu,y)\in\textnormal{Tan}(\mu,x).\]
	In particular, if $0<\Theta^s_*(\mu,x),\,\Theta^{s*}(\mu,x)<\infty$ for $\mu$-a.e. $x\in\supp\mu$, then for $\mu$-a.e. $x\in \supp\mu$,
	\[\textnormal{Tan}(\mu,x)\subset\mathcal{A}\left( \frac{\Theta^{s*}(\mu,x)}{\Theta^s_*(\mu,x)} \right) .\]
\end{lemma}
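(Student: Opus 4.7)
The plan is to reduce to the defining sequence and transfer the density bounds at $x$ to ball mass bounds around the basepoint $y$ of the tangent, using the weak-* characterisation of $d_*$-convergence.

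First I would fix $(\nu,\rho,y)\in\textnormal{Tan}(\mu,x)$ and a sequence $r_i\to 0$ with $d_*(T_{r_i}(\mu,d,x),(\nu,\rho,y))\to 0$. By Proposition \ref{prop:embeddingsandconvergence}, I may isometrically embed everything into a common complete separable $(Z,\lambda,z)$ so that the basepoints $x$ and $y$ both map to $z$. Writing $\tilde\mu_i$ for the pushforward of $\mu/\mu(B_d(x,r_i))$ (under the embedding implementing the $1/r_i$-rescaling), one has the identity
\[
\tilde\mu_i(B_Z(z,r))=\frac{\mu(B_d(x,rr_i))}{\mu(B_d(x,r_i))}.
\]
By Proposition \ref{prop:Fequivweak*}, $F_z$-convergence is the same as weak-* convergence, so I can apply Proposition \ref{prop:properties_of_weak_con}.

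For the \emph{lower bound} $\nu(B(y,r))\geq (\Theta^s_*/\Theta^{s*})\,r^s$, inequality \eqref{fir} gives
\[
\nu(B(z,r))\;\geq\;\limsup_{i\to\infty}\tilde\mu_i(B(z,r))\;\geq\;\liminf_{i\to\infty}\frac{\mu(B_d(x,rr_i))/(2rr_i)^s}{\mu(B_d(x,r_i))/(2r_i)^s}\cdot r^s,
\]
and the elementary bound $\liminf(a_i/b_i)\geq (\liminf a_i)/(\limsup b_i)$ for positive sequences yields the stated lower bound using the definitions of $\Theta^s_*(\mu,x)$ and $\Theta^{s*}(\mu,x)$ along the subsequence $r_i\to 0$. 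For the \emph{upper bound}, I first replace the closed ball by a slightly larger open ball, since \eqref{balsam} is phrased for open balls: for any $\varepsilon>0$,
\[
\nu(B(z,r))\;\leq\;\nu(U(z,r+\varepsilon))\;\leq\;\liminf_{i\to\infty}\tilde\mu_i(U(z,r+\varepsilon))\;\leq\;\limsup_{i\to\infty}\frac{\mu(B_d(x,(r+\varepsilon)r_i))}{\mu(B_d(x,r_i))},
\]
and the same density bounds together with $\limsup(a_i/b_i)\leq (\limsup a_i)/(\liminf b_i)$ give an upper bound of $(r+\varepsilon)^s\,\Theta^{s*}(\mu,x)/\Theta^s_*(\mu,x)$; letting $\varepsilon\to 0$ finishes this half.

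For the ``in particular'' statement, at $\mu$-a.e.\ $x$ the conclusion of Lemma \ref{lemma:rescalemetrics2} holds, so every $z\in\supp\nu$ gives $(\nu,z)\in\textnormal{Tan}(\mu,x)$, and the above bounds then apply at $z$. This yields $\nu\in\mathcal{A}^s(\Theta^{s*}(\mu,x)/\Theta^s_*(\mu,x))$ for every tangent $\nu$ at such an $x$. The main subtlety I anticipate is the careful bookkeeping with liminf/limsup of quotients and the open/closed ball distinction inherent in Proposition \ref{prop:properties_of_weak_con}; once that is handled, the rest is bookkeeping.
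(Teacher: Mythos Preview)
Your proposal is correct and follows essentially the same route as the paper's proof: embed into a common space via Proposition \ref{prop:embeddingsandconvergence}, pass to weak-* convergence via Proposition \ref{prop:Fequivweak*}, and then use the open/closed ball inequalities of Proposition \ref{prop:properties_of_weak_con} together with the density definitions. Your explicit invocation of the elementary bounds $\liminf(a_i/b_i)\geq(\liminf a_i)/(\limsup b_i)$ and $\limsup(a_i/b_i)\leq(\limsup a_i)/(\liminf b_i)$ makes the argument slightly more transparent than the paper's version, and your handling of the ``in particular'' clause via \ref{lemma:rescalemetrics2} matches the paper exactly.
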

\begin{proof}
	By Proposition \ref{prop:embeddingsandconvergence} and Proposition \ref{prop:Fequivweak*}, we have that for any $(\nu,y)\in \textnormal{Tan}(\mu,x)$ there exists a sequence $r_i\rightarrow 0$, a metric space $(Z,\lambda,z)$ and  isometries $(T_{r_i}(\mu,d,x)),(\nu,\rho,y)\rightarrow (Z,\lambda,z)$ such that
	\begin{equation*}
	 \frac{\mu_i}{\mu(B(x,r_i))}\xrightharpoonup{*}\nu.
	\end{equation*}
	Thus by (\ref{balsam}) we have 
	\begin{align*}
		\nu(B(y,R))\leq \nu(U(y,R+\varepsilon))&\leq\liminf_{i\rightarrow \infty}\frac{\mu(B(x,(R+\varepsilon)r_i))}{\mu(B(x,r_i))}\\
		&=(R+\varepsilon)^s\liminf_{i\rightarrow \infty} \frac{\mu(B(x,(R+\varepsilon)r_i))}{(2(R+\varepsilon)r_i)^s}\cdot \frac{(2r_i)^s}{\mu(B(x,r_i))}\\
		&\leq(R+\varepsilon)^s\frac{\Theta^{s*}(\mu,x)}{\Theta^s_*(\mu,x)}\xrightarrow{\varepsilon\rightarrow 0}R^s\frac{\Theta^{s*}(\mu,x)}{\Theta^s_*(\mu,x)}.
	\end{align*}
	Similarly, by (\ref{fir}),
	\[\nu(B(y,R))\geq \limsup_{i\rightarrow \infty}\frac{\mu(B(x,(R+\varepsilon)r_i))}{\mu(B(x,r_i))}\geq R^s\frac{\Theta_*^s(\mu,x)}{\Theta^{s*}(\mu,x)}\]
	which concludes the first statement. The particular case follows from \ref{lemma:rescalemetrics2}.
\end{proof}
\begin{corollary}\label{remark:ahlfors}
	For an $\mathcal{H}^s$-measurable set $E\subset X$ with $\mathcal{H}^s(E)<\infty$, we can combine Lemma \ref{lemma:ahlforsregular} and Theorem \ref{theorem:density}, to see that for $\mathcal{H}^s$-a.e. $x\in E$,
	\[R^s\Theta_*^s(\mathcal{H}^s \llcorner E,x)\leq\nu(B(y,R))\leq R^s\frac{1}{\Theta^s_*(\mathcal{H}^s \llcorner E,x)}\]
	for all $(\nu,y)\in\textnormal{Tan}(\mathcal{H}^s \llcorner E,x)$.
\end{corollary}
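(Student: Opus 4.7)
The plan is to deduce this as an almost immediate consequence of the two quoted results. First, at any point $x \in E$ where $\Theta^s_*(\mathcal{H}^s\llcorner E, x) = 0$, both inequalities are trivially satisfied ($0 \leq \nu(B(y,R))$ and $\nu(B(y,R)) \leq \infty$), so we can restrict attention to the set of $x \in E$ with $\Theta^s_*(\mathcal{H}^s\llcorner E, x) > 0$.

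On this set, I would invoke Theorem \ref{theorem:density}\textbf{(i)} to obtain that for $\mathcal{H}^s$-a.e.\ $x \in E$ we have $2^{-s} \leq \Theta^{*s}(\mathcal{H}^s\llcorner E,x) \leq 1$; in particular $\Theta^{*s}(\mathcal{H}^s\llcorner E,x)$ is finite and positive. Together with the assumption $\Theta^s_*(\mathcal{H}^s\llcorner E,x) > 0$, this places us exactly in the hypotheses of Lemma \ref{lemma:ahlforsregular} applied to $\mu = \mathcal{H}^s \llcorner E$.

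Lemma \ref{lemma:ahlforsregular} then yields, for every $(\nu,y) \in \textnormal{Tan}(\mathcal{H}^s\llcorner E,x)$,
\[\frac{\Theta^s_*(\mathcal{H}^s\llcorner E,x)}{\Theta^{*s}(\mathcal{H}^s\llcorner E,x)}\, R^s \;\leq\; \nu(B(y,R)) \;\leq\; \frac{\Theta^{*s}(\mathcal{H}^s\llcorner E,x)}{\Theta^s_*(\mathcal{H}^s\llcorner E,x)}\, R^s.\]
Using $\Theta^{*s}(\mathcal{H}^s\llcorner E,x) \leq 1$ in the denominator of the lower bound and in the numerator of the upper bound gives the claimed inequalities. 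No additional obstacle is anticipated, as the argument is a straightforward substitution; the only mild subtlety is remembering to dispose of the possibly exceptional set where $\Theta^s_* = 0$ by the trivial observation above.
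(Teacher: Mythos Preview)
Your proposal is correct and follows exactly the approach indicated by the paper, which offers no separate proof but simply instructs the reader to combine Lemma~\ref{lemma:ahlforsregular} with Theorem~\ref{theorem:density}. Your handling of the degenerate case $\Theta^s_*=0$ and the substitution $\Theta^{*s}\leq 1$ into both sides of the Ahlfors estimate are precisely the details this combination entails.
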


%\begin{prop}\label{prop:decompdensity}
%	Let $(X,d)$ be a complete metric space and $E\subset X$ be $\mathcal{H}^1$-measurable with $\mathcal{H}^1(E)<\infty$. Suppose that for $\mathcal{H}^1$-a.e. $x\in E$, $\Theta_*(E,x)>0$ and
%	\[\textnormal{Tan}(X,d,\mathcal{H}^1\llcorner E,x)\subset \mathcal{C}^*.\]
%	Then there exists compact $C_i\subset E$ such that
%	\begin{enumerate}
%		\item  \[\mathcal{H}^1\left(E\backslash \bigcup_{i\in \mathbb{N}}C_i\right)=0.\]
%		\item for each $i\in\mathbb{N}$ there is a $d_i>0$ such that for all $x\in C_i$
%		\[d_ir\leq \mathcal{H}^1(B(x,r)\cap S)<4r,\qquad\forall r\in(0,d_i).\]
%		\item GTA.
%	\end{enumerate}
%	
%\end{prop}

	\section{Connected Tangents of Metric Spaces}\label{connectedtangents}
	%As yet, we have not assumed any additional structure on our tangents but here we move on to consider connected tangents in particular. Recall that in the geometric argument outlined in the introduction, the circle pairs were disjoint so that the errant measure in the common region of each pair was distinct from any other pair. This allowed us to demonstrate a cumulatively large amount of missing measure and thus produce a contradiction. We have already alluded to the fact that we can expand the empty common region of a circle pair to an empty channel running throughout the set, however we will still need to be able to demonstrate that any errant measure found in a particular channel is going to be distinct from the measure found in any other channel. To do this, we require more information about the behaviour of connected tangents. In Lemma \ref{lemma:uniformquasi} we are able to quantify in some sense how far the tangents can wander which will later allow us to find a scale at which we may impose a notion of disjointedness. 

In this section we show how the hypothesis that all tangent spaces are connected improves to a stronger, quantitative notion of connectedness.

Let $\mathcal{C}$ be the collection of connected separable metric spaces and let $\mathcal{C}^*=\{(\mu,d,x)\in\mathbb{M}_*:\supp\mu\in\mathcal{C}\}$ equipped with $d_*$. 

%\begin{definition}[Quasi-paths]

%	Let $\delta>0$. We say that a metric space $(X,d)$ admits a $\delta$-quasi-path between $a,\,b\in X$ if there exists a finite collection of points $\{x_n\}_{n=0}^N\subset X$ such that $d(x_n,\,x_{n+1})<\delta d(a,b)$ with $x_0=a$, $x_N=b$. 

%	 

%	If there is a $\delta$-quasi-path for all pairs of points, then we call $X$ $\delta$-quasi-path-connected. We collect all such spaces into the set $\mathcal{Q}(\delta)$.

%	

%	If there exists an $R>0$ such that for all $a,b\in X$, $B(a,Rd(a,b))$ admits a $\delta$-quasi-path between $a$ and $b$, then we say that $X$ is $R$-locally $\delta$-quasi-path-connected. We collect all such spaces into the set $\mathcal{Q}_{R}(\delta)$.

%\end{definition}

\begin{definition}[Quasi-path-connected]

	Let $\delta>0$. We say that a metric space $(X,d)$ admits a $\delta$-quasi-path between $a,\,b\in X$ if there exists a finite collection of points $\{x_n\}_{n=1}^N\subset X$ such that $x_1=a$, $x_N= b$ and $d(x_n,\,x_{n+1})\leq\delta d(a,b)$ for each $n$. We have the following refinements:

	\begin{enumerate}

		\item $\mathcal{Q}(\delta,R)$ is the collection of metric spaces $(X,d)$ such that for all $a,b\in X$, $B(a,Rd(a,b))$ admits a $\delta$-quasi-path between $a$ and $b$. We say that such an $X$ is $R$-locally $\delta$-quasi-path-connected.

		\item $\mathcal{Q}_p(\delta,R)$ is the collection of pointed metric spaces $(X,d,x)$ such that for all $y\in X$, $B(x,Rd(x,y))$ admits a $\delta$-quasi-path between $x$ and $y$.

		\item $\mathcal{Q}_*(\delta,R)=\{(\mu,y)\in\mathbb{M}_*:(\supp\mu,y)\in \mathcal{Q}_p(\delta,R)\}.$

		\item $\mathcal{Q}_*(\delta, R, C)=\mathcal{Q}_*(\delta, R)\cap \mathcal{A}(C).$

	\end{enumerate}

\end{definition}

\begin{lemma}\label{remark:decompose}
	For $\delta,R,C>0$, let $(X,d,\mu,a)\notin \mathcal{Q}_*(\delta,\,R,\,C)$. Then there exists a point $b\in \supp\mu$ such that there exists a decomposition of $\supp\mu \cap B(a,Rd(a,b))$ into sets $A$ and $B$ containing $a$ and $b$ respectively with 	
	\[\textnormal{dist}(A,B)>\delta d(a,b).\]
\end{lemma}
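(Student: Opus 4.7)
The plan is to build the decomposition directly as a reachability partition: I would take $A$ to be the points in $\supp\mu\cap B(a,Rd(a,b))$ reachable from $a$ by short hops staying inside that set, and $B$ to be the rest. First I would extract the witness $b$ from the hypothesis. Since $(X,d,\mu,a)\notin\mathcal{Q}_*(\delta,R,C)$ and the conclusion concerns only quasi-path-connectedness of $\supp\mu$, we may assume the failure is of the $\mathcal{Q}_p(\delta,R)$ condition (the Ahlfors regularity clause in $\mathcal{Q}_*(\delta,R,C)$ plays no role in the separation conclusion). Unwinding the definition, there exists $b\in\supp\mu$ such that no finite sequence $a=x_1,\ldots,x_N=b$ lying in $\supp\mu\cap B(a,Rd(a,b))$ satisfies $d(x_i,x_{i+1})\leq\delta d(a,b)$ for every $i$. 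Write $r:=d(a,b)$ for brevity.

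Next, I would define $A$ to be the set of those $x\in\supp\mu\cap B(a,Rr)$ admitting \emph{some} chain $a=x_1,\ldots,x_N=x$ in $\supp\mu\cap B(a,Rr)$ with $d(x_i,x_{i+1})\leq\delta r$ for each $i$, and set $B:=(\supp\mu\cap B(a,Rr))\setminus A$. The one-point chain shows $a\in A$; our choice of $b$ forces $b\in B$; and by construction these two sets partition $\supp\mu\cap B(a,Rr)$.

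Finally, for the separation, suppose for contradiction that $x\in A$ and $y\in B$ satisfy $d(x,y)\leq\delta r$. Appending $y$ to a chain from $a$ to $x$ (which exists by definition of $A$ and lies in $\supp\mu\cap B(a,Rr)$, and which we extend by a single step of length $d(x,y)\leq\delta r$) yields a chain witnessing $y\in A$, contradicting $y\in B$. Hence $\textnormal{dist}(A,B)>\delta d(a,b)$, as required. I do not anticipate any real obstacle: the only mildly delicate point is that the non-strict bound $\leq\delta r$ appearing in the definition of quasi-path is exactly what upgrades the failed reachability into the \emph{strict} separation demanded by the statement.
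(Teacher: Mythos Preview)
Your construction is exactly the paper's: take $A$ to be the set of points in $\supp\mu\cap B(a,Rr)$ reachable from $a$ by $\delta r$-chains, and $B$ its complement there. The issue is the last step. Showing $d(x,y)>\delta r$ for every individual pair $x\in A$, $y\in B$ yields only $\textnormal{dist}(A,B)\geq\delta r$ after taking the infimum, not the strict inequality the lemma asserts. A toy picture with $r=1$: if $\supp\mu=\{0\}\cup\{(1+\tfrac{1}{n})\delta:n\geq 1\}\cup\{1\}$ on the line, then $A=\{0\}$, every pairwise distance to $B$ exceeds $\delta$, yet $\textnormal{dist}(A,B)=\delta$ on the nose.

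This is exactly where the Ahlfors regularity you set aside is used. The paper invokes $\mu\in\mathcal{A}(C)$ to make $\supp\mu\cap B(a,Rr)$ compact; then $A$ and $B$ are compact, the infimum $\textnormal{dist}(A,B)$ is attained at some pair $(a',b')$, and your chain-extension argument applied to that pair gives the contradiction and hence the strict inequality. So the parenthetical remark that the $\mathcal{A}(C)$ clause ``plays no role in the separation conclusion'' is the one genuine gap; once you reinstate it to obtain compactness, your proof coincides with the paper's.
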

\begin{proof}
	Suppose $(X,d,\mu,a)\notin \mathcal{Q}_*(\delta,\,R)$. Then there exists a point $b\in \supp\mu$ such that there is no $\delta$-quasi-path between $a$ and $b$ in $B(a,R d(a,b))$.
	Define
	\begin{multline*}
	A=\{x\in \supp\mu\cap B(a,Rd(a,b)):\\ \textnormal{there exists a $\delta$-quasi-path between $a$ and $x$ in $B(a,Rd(a,b))$}\}
	\end{multline*}
	and $B=\supp\mu\cap B(a,Rd(a,b))\setminus A$.
	Then $a\in A$ and $b\in B$.

	Suppose that $\textnormal{dist}(A,B) \leq \delta (a,b)$.
	Then, since $\mu\in\mathcal{A}(C)$, $A$ and $B$ are compact.
	Thus there exists $a'\in A$ and $b'\in B$ such that $d(a',b')\leq \delta d(a,b)$.
	Then there exists a $\delta$-quasi-path between $a$ and $b$ in $B(a,Rd(a,b))$, a contradiction.
\end{proof}
\begin{lemma}
	Suppose $E\subset X$ is such that there exists a $\delta$-quasi-path connecting $x,y\in E$ in $B(x,Rd(x,y))$ for some $R>1$. Suppose also that $F\subset X$ satisfies
	\begin{equation} \label{sleepy}
		d_{H(x)}(E,F)<\varepsilon d(x,y),\qquad \text{where }\varepsilon<\min\left\{\frac{1}{(R+1)d(x,y)^2},1\right\}.
	\end{equation}
	Then $F\cup\{x,y\}$ contains a $(\delta+2\varepsilon)$-quasi-path connecting $x$ and $y$ in \[B(x,(R+\varepsilon)d(x,y)).\]
\end{lemma}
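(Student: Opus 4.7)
The plan is to push the given $\delta$-quasi-path in $E$ forward to a nearby sequence of points in $F$ using the pointed Hausdorff hypothesis, then check spacing and ball containment by direct triangle-inequality computations. The role of the technical assumption $\varepsilon<1/((R+1)d(x,y)^2)$ is just to guarantee that the entire $\delta$-quasi-path sits inside the ball where the Hausdorff closeness is usable.

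First, I would unpack the hypothesis. Because $d_{H(x)}(E,F)<\varepsilon d(x,y)$, one may fix a single $\varepsilon'$ with $d_{H(x)}(E,F)<\varepsilon'<\varepsilon d(x,y)$ for which every point of $E\cap B(x,1/\varepsilon')$ lies within $\varepsilon'$ of some point of $F$ (and vice versa). The bound on $\varepsilon$ gives $\varepsilon d(x,y)<1/((R+1)d(x,y))$, and hence
\[\frac{1}{\varepsilon'} > \frac{1}{\varepsilon d(x,y)} > (R+1)d(x,y) > Rd(x,y).\]
Therefore the given $\delta$-quasi-path $x=x_1,\ldots,x_N=y$, which lies in $E\cap B(x,Rd(x,y))$, sits inside $E\cap B(x,1/\varepsilon')$, so the Hausdorff approximation applies to every vertex.

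Next, I would construct an approximating sequence. Set $z_1:=x$ and $z_N:=y$, and for each interior index $2\leq i\leq N-1$ choose $z_i\in F$ with $d(x_i,z_i)<\varepsilon'$. The triangle inequality then gives, for every $1\leq i\leq N-1$,
\[d(z_i,z_{i+1}) \leq d(z_i,x_i)+d(x_i,x_{i+1})+d(x_{i+1},z_{i+1}) < 2\varepsilon'+\delta d(x,y) < (\delta+2\varepsilon)d(x,y),\]
where at the boundary indices $i=1$ or $i+1=N$ the corresponding $\varepsilon'$-term is simply $0$. For ball containment, each interior $z_i$ satisfies
\[d(x,z_i) \leq d(x,x_i)+d(x_i,z_i) < Rd(x,y)+\varepsilon' < (R+\varepsilon)d(x,y),\]
while $z_1=x$ and $z_N=y$ lie in $B(x,(R+\varepsilon)d(x,y))$ trivially, using $R>1$. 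Thus $z_1,\ldots,z_N$ is the desired $(\delta+2\varepsilon)$-quasi-path from $x$ to $y$ in $F\cup\{x,y\}$ contained in $B(x,(R+\varepsilon)d(x,y))$.

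The only nontrivial step is the verification $Rd(x,y)<1/\varepsilon'$, which is precisely where the quantitative hypothesis on $\varepsilon$ is used; the rest is a direct application of the triangle inequality, so I anticipate no serious obstacle.
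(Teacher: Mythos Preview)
Your proposal is correct and follows essentially the same route as the paper: push the $\delta$-quasi-path in $E$ forward to nearby points of $F$ using the pointed Hausdorff bound, then verify spacing and ball containment by the triangle inequality, with the quantitative hypothesis on $\varepsilon$ ensuring the whole path lies in the region where the Hausdorff approximation applies. Your treatment of the endpoints (setting $z_1=x$, $z_N=y$ directly) is slightly tidier than the paper's, but the argument is otherwise identical.
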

\begin{proof}
	Let $\{x_n\}\in E$ be the $\delta$-quasi-path connecting $x,y\in E$. Note that
	\[\frac{1}{\varepsilon d(x,y)}>(R+\varepsilon)d(x,y).\]
	Thus by (\ref{sleepy}), each $x_n$ is contained in $B(x,1/\varepsilon d(x,y))$ and by (\ref{sleepy}) there exists an $\hat{x}_n\in F$ such that
	\[d(x_n,\hat x_n)<\varepsilon d(x,y).\] 
	Thus, the triangle inequality gives us
	\[d(\hat x_n,\hat x_{n+1})\leq d(\hat x_n,x_n)+d(x_n,x_{n+1})+d(\hat x_{n+1},x_{n+1})\leq (\delta+2\varepsilon)d(x,y). \]
	Further, by assumption we have
	\[d(x,x_n)\leq Rd(x,y)\qquad \forall n\in\mathbb{N}\]
	thus another application of the triangle inequality tells us that $\{\hat x_n\}\cup \{x,y\}$ is the required $(\delta+2\varepsilon)$-quasi-path in $(F\cup \{x,y\})\cap B(x,(R+\varepsilon)d(x,y)) $. 
\end{proof}
%\begin{lemma}\label{quasiclose}\unsure{arg1: Proposition 2.4, Lemma 2.7 and step 3 of Lemma 4.1 all use the same idea: Being GH close to something in $Q(\delta,R,C)$ implies you are in $Q(\delta',R',C)$.}
%	Suppose $E,F\subset X$ satisfy $E\in Q(\delta,R,C)$ and $F\in \mathcal{A}(C)$. Suppose also that
%	\[d_{GH}(E,F)<\varepsilon.\]
%	Then $F\in Q(\delta+2\varepsilon,R+2\varepsilon,C)$.
%\end{lemma}
%\begin{proof}
%	content...
%\end{proof}

\begin{lemma}\label{close?}
	Let $(X,\mu,x)$ be a pointed metric space and $\varepsilon,\delta\in(0,1)$. Let $\hat x\in B(x,(1/\varepsilon - \delta))$ be such that
	\begin{equation}\label{pawpaw}
		\mu(B(\hat x,\delta))>\varepsilon.
	\end{equation} 
	If $K_\mu\subset X$ is such that 
	\begin{equation}\label{teatime}
		\mu\left(B\left(x,\frac{1}{\varepsilon}\right)\backslash K_\mu\right)<\varepsilon,
	\end{equation}
	then $K_\mu$ must contain a point $k_\mu\in K_\mu$ such that
	\[d(\hat x,k_\mu)<\delta.\]
\end{lemma}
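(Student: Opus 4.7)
The plan is to argue by contradiction. Suppose no such point $k_\mu$ exists, i.e.\ $K_\mu \cap B(\hat x, \delta) = \emptyset$, so that $B(\hat x, \delta) \subset X \setminus K_\mu$. The goal is then to derive a contradiction with \eqref{teatime} by showing that $B(\hat x, \delta)$ already contributes more than $\varepsilon$ measure to $B(x,1/\varepsilon) \setminus K_\mu$.

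The key step is a simple triangle inequality verification that $B(\hat x, \delta) \subset B(x, 1/\varepsilon)$. Indeed, since $\hat x \in B(x, 1/\varepsilon - \delta)$, any $z \in B(\hat x, \delta)$ satisfies
\[
d(x, z) \leq d(x, \hat x) + d(\hat x, z) \leq \left(\tfrac{1}{\varepsilon} - \delta\right) + \delta = \tfrac{1}{\varepsilon},
\]
so $z \in B(x, 1/\varepsilon)$. Combining this containment with the contradiction hypothesis gives $B(\hat x, \delta) \subset B(x, 1/\varepsilon) \setminus K_\mu$.

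Monotonicity of $\mu$ together with \eqref{pawpaw} then yields
\[
\mu\!\left(B\!\left(x, \tfrac{1}{\varepsilon}\right) \setminus K_\mu\right) \geq \mu(B(\hat x, \delta)) > \varepsilon,
\]
contradicting \eqref{teatime}. Hence the assumption is false, and $K_\mu$ must contain a point $k_\mu$ with $d(\hat x, k_\mu) < \delta$. There is no real obstacle here; the lemma is essentially a direct consequence of the triangle inequality and monotonicity of measure, and the proof is a one-line bookkeeping argument.
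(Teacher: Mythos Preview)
Your proof is correct and is essentially identical to the paper's own argument: the paper simply observes that $B(\hat x,\delta)\subset B(x,1/\varepsilon)$ and combines \eqref{pawpaw} with \eqref{teatime} to conclude that $K_\mu\cap B(\hat x,\delta)$ cannot be empty. You have merely written out the triangle-inequality containment and phrased the same measure comparison as a contradiction.
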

\begin{proof}
	Because $B(\hat{x},\delta)\subset B(x,1/\varepsilon) $, combine (\ref{pawpaw}) with (\ref{teatime}) to see that there must be some point $k_\mu\in K_\mu\cap B(\hat{x},\delta)$. 
\end{proof}

\begin{corollary}\label{pathypath}
	For $R>1$, let $(X,\mu,x)$ be a metric measure space, $y\in \supp\mu$ and suppose that $\supp \mu$ contains a $\delta$-quasi-path $\{x_n\}$ in $B(x,Rd(x,y))$ connecting $x$ and $y$. Suppose also that for $\varepsilon,\delta \in (0,1)$,
	\begin{equation}\label{pips}
		\mu(B(x_n,\delta d(x,y)))>\varepsilon\qquad\forall\,n\in\mathbb{N}.
	\end{equation}
	Then for any $K_\mu\subset X$ containing $x,\,y$ and satisfying
	\begin{equation}\label{squeaky}
		\mu(B(y,(R+1)d(x,y))\backslash K_\mu)<\varepsilon,
	\end{equation}
	$K_\mu\cap B(y,(R+\delta)d(x,y))$ contains a $3\delta$-quasi-path connecting $y$ and $t$.
\end{corollary}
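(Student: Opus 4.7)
The plan is to apply Lemma \ref{close?} once at each point $x_n$ of the given quasi-path in order to extract a nearby point $\hat x_n \in K_\mu$, and then to verify that the resulting finite sequence is still a quasi-path (with constant degraded from $\delta$ to $3\delta$) contained in the prescribed ball.

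Concretely, since $x_n \in B(x,Rd(x,y))$ with $R>1$ and $\delta<1$, each ball $B(x_n,\delta d(x,y))$ lies inside the ambient ball appearing in (\ref{squeaky}). Hypothesis (\ref{pips}) places $\mu$-mass strictly greater than $\varepsilon$ inside $B(x_n,\delta d(x,y))$, while (\ref{squeaky}) removes at most $\varepsilon$ of $\mu$-mass from the ambient ball; hence $K_\mu \cap B(x_n,\delta d(x,y))\neq \emptyset$. This is precisely the content of Lemma \ref{close?}, applied (after a trivial rescaling by $d(x,y)$) with $x_n$ in the role of $\hat x$, and it lets me select $\hat x_n \in K_\mu$ with $d(\hat x_n,x_n)\leq \delta d(x,y)$ for each $1<n<N$; for the endpoints I take $\hat x_1=x$ and $\hat x_N=y$, both of which lie in $K_\mu$ by assumption.

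To confirm the $3\delta$-quasi-path property, three uses of the triangle inequality yield
\[d(\hat x_n,\hat x_{n+1})\leq d(\hat x_n,x_n)+d(x_n,x_{n+1})+d(x_{n+1},\hat x_{n+1})\leq 3\delta\, d(x,y),\]
and containment in the target ball follows immediately from $d(x,\hat x_n)\leq d(x,x_n)+d(x_n,\hat x_n)\leq (R+\delta)d(x,y)$. The argument is essentially a packaging exercise, and there is no real obstacle; the only mild subtlety is keeping track of the various radii to confirm that the ball in (\ref{squeaky}) truly contains every $B(x_n,\delta d(x,y))$, which follows at once from $R>1$ and $\delta<1$.
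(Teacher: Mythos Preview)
Your argument is essentially the same as the paper's: apply Lemma \ref{close?} at each node $x_n$ to obtain $q_n\in K_\mu\cap B(x_n,\delta d(x,y))$, use the triangle inequality to get the $3\delta$ bound on consecutive gaps, and adjoin the endpoints $x,y\in K_\mu$. Your write-up is in fact slightly more careful than the paper's (which contains several typos, e.g.\ the undefined $t$ and $K_\nu$); the only cosmetic slip is that you verify containment in $B(x,(R+\delta)d(x,y))$ while the statement is phrased with the ball centred at $y$, but this is immaterial to the argument.
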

\begin{proof}
	Lemma \ref{close?} implies that, for each $n\in\mathbb{N}$, there exists some point $q_n\in K_\mu \cap B(x_n,\delta d(x,y))$. Hence,
	\[d(q_n,q_{n+1})\leq d(q_n,x_n)+d(x_n,x_{n+1})+d(q_{n+1},x_{n+1})\leq 3\delta d(x,y).\]
	Thus the collection $\{y, t, q_n\}$ forms the required $3\delta$-quasi-path in $K_\nu\cap B(y,(R+\delta)d(y,t))$.
\end{proof}

%\begin{lemma}
%	Let $(\mu,x),\,(\nu,y)\in\mathbb{M}_*$ where $(\nu,y)\in\mathcal{Q}_*(\delta,\,R,\,C)$ and let $\hat{x}\in \supp\mu$. If 
%	\begin{equation}\label{cow}
%		\mu(B(\hat x,\delta d(x,\hat x)))\geq \varepsilon\qquad\text{where } \varepsilon\leq \min \left\{\frac{\delta d(x,\hat x)}{4C},\frac{1}{(1+\delta)d(x,\hat x)}\right\}.
%	\end{equation}
%	Then
%	\begin{equation}
%		d_*((\mu,x),\,(\nu,y))<\varepsilon
%	\end{equation}
%	implies that there exists a $\delta+$-quasi-path between $x$ and $\hat x$ in $\supp\mu\cap B(x,(R+\delta)d(x,\hat x))$.
%\end{lemma}
%
%\begin{proof}
% 	By ........., there exists compact
% 	$x\in K_\mu\subset\supp\mu\cap B(x,1/\varepsilon)$ and $y\in K_\nu \subset \supp\nu\cap B(y,1/\varepsilon)$
% 	such that 
% 	\begin{enumerate}[left=1cm, label=\textbf{M\arabic*.},ref=\textbf{M\arabic*.}]
% 		\item $\max\{\mu(B(x,1/\varepsilon)\backslash K_\mu),\nu(B(y,1/\varepsilon)\backslash K_\nu)\}<\varepsilon$;
% 		\item $d_{pmGH}((K_\mu,\mu\llcorner K_\mu,x),(K_\nu,\nu\llcorner K_\nu,y))<3\varepsilon$.
% 	\end{enumerate}	
% 	Let $(K_\mu,x)(K_\nu,y)\to (Z,z)$ be an embedding as given by Proposition? that gives us
% 	\[d_{H(z)}(K_\mu,K_\nu)<3\varepsilon.\]
%	By Proposition \ref{close?}, $K_\mu$ must contain a point $k_\mu\in K_\mu$ such that
%	\[d(\hat x,k_\mu)<\delta d(x,\hat{x}).\]
%	Then, by \ref{largesubsets2}, there must be a point $\hat y\in K_\nu$ such 
%	\[d_Z(k_\mu,\hat{y})<3\varepsilon.\]
%	Because $(\nu,y)\in\mathcal{Q}_*(\delta,\,R,\,C)$, there exists a $\delta$-quasi-path between $y$ and $\hat y$.
%\end{proof}
%
%\newpage 
In general, $\mathcal{Q}(\delta,R)\subsetneq \mathcal{Q}_p(\delta,R)$. Indeed, if $X=\{0\}\cup\{1/2^n\}_{n\in\mathbb{N}}$ with the Euclidean metric, then $(X,\|\cdot\|_2,0)\in \mathcal{Q}_p(1,1/2)$ but for any $n$, there is no 1/2-quasi-path from $1/2^n$ to $1/2^{n+1}$ so $X\notin \mathcal{Q}(1,1/2)$. However, in the context of tangents, \ref{lemma:rescalemetrics2} shows that $\mathcal{Q}_p(\delta,R)$ and $\mathcal{Q}(\delta,R)$ are essentially equivalent.
\begin{corollary}
	Let $(X,d,\mu)$ be an asymptotically doubling metric measure space. Suppose that for $\mu$-a.e. $x\in\supp\mu$ there exists $R_x,\delta_x >0$ such that
	\[\textnormal{Tan}(\mu,x)\subset \mathcal{Q}_*(\delta_x,R_x).\]
	Then for $\mu$-a.e. $x\in\supp\mu$, 
	\[ (\nu,y)\in \textnormal{Tan}(\mu,x) \quad \implies\quad  (\supp\nu,y) \in \mathcal{Q}(\delta_x,R_x). \] 
\end{corollary}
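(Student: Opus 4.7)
The plan is to reduce the claim to a direct application of Lemma \ref{lemma:rescalemetrics2}, which says that tangents are closed under translation of basepoint to any support point. The only thing to do is compare the two quasi-path notions: $\mathcal{Q}_*(\delta,R)$ asks for a quasi-path only \emph{from the basepoint} to any other support point, while $\mathcal{Q}(\delta,R)$ asks for a quasi-path between any two points of the space. Translating the basepoint bridges this gap.

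More precisely, I would fix a $\mu$-full-measure set $X_0 \subset \supp\mu$ on which both the hypothesis $\textnormal{Tan}(\mu,x)\subset \mathcal{Q}_*(\delta_x,R_x)$ holds and the conclusion of Lemma \ref{lemma:rescalemetrics2} holds. Now fix any $x\in X_0$ and any tangent $(\nu,y)\in \textnormal{Tan}(\mu,x)$; I want to show $(\supp\nu,y)\in\mathcal{Q}(\delta_x,R_x)$. Take arbitrary $a,b\in \supp\nu$. By \ref{lemma:rescalemetrics2}, translating the basepoint from $y$ to $a$ yields $(\nu,a)\in \textnormal{Tan}(\mu,x)$. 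The hypothesis at $x$ then gives $(\nu,a)\in \mathcal{Q}_*(\delta_x,R_x)$, i.e.\ $(\supp\nu,a)\in \mathcal{Q}_p(\delta_x,R_x)$. Unpacking the definition of $\mathcal{Q}_p$, this produces a $\delta_x$-quasi-path from $a$ to $b$ inside $B(a,R_x d(a,b))$. Since $a,b\in\supp\nu$ were arbitrary, $(\supp\nu,d)\in\mathcal{Q}(\delta_x,R_x)$, as required.

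There is no real obstacle here: the statement is essentially a book-keeping corollary of Lemma \ref{lemma:rescalemetrics2}. The only mild subtlety is making sure the constants $\delta_x, R_x$ can be kept fixed when we change the basepoint of the tangent; but the hypothesis is stated uniformly over $\textnormal{Tan}(\mu,x)$ for the single point $x$, so any tangent at $x$ (including the translated one) automatically inherits the same pair $(\delta_x,R_x)$.
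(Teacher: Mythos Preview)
Your proposal is correct and follows exactly the approach the paper intends: the corollary is stated without explicit proof, immediately after the remark that ``in the context of tangents, \ref{lemma:rescalemetrics2} shows that $\mathcal{Q}_p(\delta,R)$ and $\mathcal{Q}(\delta,R)$ are essentially equivalent.'' Your argument is precisely the unpacking of that sentence---translate the basepoint of a tangent to an arbitrary support point via Lemma~\ref{lemma:rescalemetrics2}, then apply the hypothesis with the fixed constants $(\delta_x,R_x)$---and there is nothing to add.
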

We now show that if all tangents of a space are connected, then all tangents are in fact quasi-path-connected.
\begin{prop}\label{lemma:uniformquasi}
	Let $(X,\,d)$ be a complete metric space and $\mu\in\mathcal{M}_{loc}(X)$. Suppose that for $\mu$-a.e. $x\in\supp\mu$ we have
	\begin{enumerate}[left=1cm, label=\textbf{(\roman{enumi})}, ref= \textbf{(\roman{enumi})} ]
		\item $0<\Theta^1_*(\mu,x),\,\Theta^{1*}(\mu,x)<\infty$; \label{uniformquasi1}
		\item $\textnormal{Tan}(\mu,x)\subset\mathcal{C}^*.$ \label{uniformquasi2}
	\end{enumerate} 
	Then for $\mu$-a.e. $x\in \supp\mu$ and any $\delta>0$ there exists $R_x,C_x>0$ such that $\textnormal{Tan}(\mu,x)\subset \mathcal{Q}_*(\delta,R_x,C_x)$. 
\end{prop}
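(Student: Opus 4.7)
The strategy is contradiction and compactness. Fix any $x \in \supp\mu$ at which hypotheses (i) and (ii) and the $\mu$-a.e.\ conclusions of Remark \ref{remark:densedouble}, Lemma \ref{lemma:ahlforsregular} and Lemma \ref{lemma:rescalemetrics} hold. Lemma \ref{lemma:ahlforsregular} gives $\textnormal{Tan}(\mu,x)\subset \mathcal{A}^1(C_x)$ with $C_x:=\Theta^{1*}(\mu,x)/\Theta^1_*(\mu,x)\in(0,\infty)$, so the only remaining task is to produce $R_x=R_x(\delta)$. Since $\mathcal{Q}_*(\delta',R,C)\subset\mathcal{Q}_*(\delta,R,C)$ whenever $\delta'\leq\delta$, a diagonal argument over a sequence $\delta_k\downarrow 0$ reduces the statement to: for each fixed $\delta>0$ and for $\mu$-a.e.\ $x$ there exists $R_x>0$ with $\textnormal{Tan}(\mu,x)\subset\mathcal{Q}_*(\delta,R_x)$.

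Fix such $\delta>0$ and assume for contradiction no such $R_x$ exists at a generic $x$: for each $n\in\mathbb{N}$ choose $(\nu_n,y_n)\in\textnormal{Tan}(\mu,x)\setminus\mathcal{Q}_*(\delta,n)$. Lemma \ref{remark:decompose}, applied with the Ahlfors constant $C_x$ and $R=n$, yields $b_n\in\supp\nu_n$ and, writing $\ell_n:=d(y_n,b_n)$, a partition
\[
\supp\nu_n\cap B(y_n,n\ell_n)=A_n\sqcup B_n,\quad y_n\in A_n,\ b_n\in B_n,\ \textnormal{dist}(A_n,B_n)>\delta\ell_n.
\]
Rescaling the metric by $1/\ell_n$ via Lemma \ref{lemma:rescalemetrics1} keeps the tangent in $\textnormal{Tan}(\mu,x)\cap\mathcal{A}^1(C_x)$ (the measure scales to match the metric in the Ahlfors $1$-regular case), so after this normalisation $\ell_n=1$, the partition lives in $B(y_n,n)$, and $\textnormal{dist}(A_n,B_n)>\delta$.

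By Proposition \ref{prop:tancompact}, $(\textnormal{Tan}(\mu,x),d_*)$ is compact, and Proposition \ref{prop:properties_of_weak_con_ahlfors} together with Proposition \ref{prop:Fequivweak*} implies $\mathcal{A}^1(C_x)$ is $d_*$-closed; a subsequence of $(\nu_n,y_n)$ thus converges in $d_*$ to some $(\nu,y)\in\textnormal{Tan}(\mu,x)\cap\mathcal{A}^1(C_x)$, and hypothesis (ii) gives that $\supp\nu$ is connected. Using Proposition \ref{prop:embeddingsandconvergence}, realise every $\supp\nu_n$ and $\supp\nu$ as isometric subsets of a common pointed complete separable space $(Z,z)$ with $\nu_n\xrightharpoonup{*}\nu$. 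Define
\[
\mathcal{A}:=\{w\in\supp\nu:w=\textstyle\lim_n w_n\ \text{for some}\ w_n\in A_n\},\ \ \mathcal{B}:=\{w\in\supp\nu:w=\textstyle\lim_n w_n\ \text{for some}\ w_n\in B_n\}.
\]
A diagonal extraction shows each is closed in $\supp\nu$; the uniform gap $\textnormal{dist}(A_n,B_n)>\delta$ forces disjointness; and the final assertion of Proposition \ref{prop:properties_of_weak_con} approximates any $w\in\supp\nu$ by $w_n\in\supp\nu_n$, which for large $n$ lies in $A_n\cup B_n$, so $\mathcal{A}\cup\mathcal{B}=\supp\nu$. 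Clearly $y\in\mathcal{A}$, and it remains only to show $\mathcal{B}\neq\emptyset$ in order to contradict connectedness of $\supp\nu$.

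This last step is the main obstacle, because $b_n$ lies in the ball $B(z,1)\subset Z$, but $Z$ is only complete and separable, not proper, so $(b_n)$ may admit no subsequential limit in $Z$. I would resolve this with Theorem \ref{theorem:largesubsets}: for $\varepsilon$ much smaller than $\delta/C_x$ and $n$ with $d_*((\nu_n,y_n),(\nu,y))<\varepsilon$, extract compact $K_n\subset\supp\nu_n\cap B(y_n,1/\varepsilon)$ and $K_\nu\subset\supp\nu\cap B(y,1/\varepsilon)$ with $\nu_n(B(y_n,1/\varepsilon)\setminus K_n)<\varepsilon$ and pointed Gromov--Hausdorff distance below $3\varepsilon$. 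Ahlfors regularity gives $\nu_n(B(b_n,2\varepsilon C_x))\geq 2\varepsilon$, so Lemma \ref{close?} produces $\tilde b_n\in K_n$ with $d(\tilde b_n,b_n)<2\varepsilon C_x<\delta$, and the separation $\textnormal{dist}(A_n,B_n)>\delta$ forces $\tilde b_n\in B_n$. Transporting $\tilde b_n$ through the pointed Gromov--Hausdorff closeness to $K_\nu\subset\supp\nu$ yields a point of $\supp\nu$ within $O(\varepsilon)$ of $\tilde b_n$ and thus within $O(\varepsilon)$ of distance $1$ from $y$. Letting $\varepsilon\to 0$ along a diagonal in $n$, and using that $\supp\nu$ is proper (Ahlfors regular), extracts a limit $b\in\supp\nu$ with $d(y,b)=1$ that is, by construction, a limit of $B_n$-points. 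Hence $\mathcal{B}\neq\emptyset$, $\supp\nu$ is a disjoint union of two non-empty closed sets, and we reach the desired contradiction.
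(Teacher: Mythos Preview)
Your overall strategy---pass to a $d_*$-limit $(\nu,y)$, build limit sets $\mathcal A,\mathcal B\subset\supp\nu$ from the partitions $A_n,B_n$, and contradict connectedness---is sound in principle, but the step showing $\mathcal B\neq\emptyset$ does not work as written. The difficulty is exactly the one you flag: $Z$ is not proper, so $(b_n)$ need not subconverge. Your proposed remedy via Theorem \ref{theorem:largesubsets} produces compact sets $K_n,K_\nu$ with $d_{pmGH}(K_n,K_\nu)<3\varepsilon$, but that closeness is realised in some \emph{other} ambient space $W$, not in $Z$. Transporting $\tilde b_n\in K_n$ to a nearby $c_n\in K_\nu$ gives $d_W(\tilde b_n,c_n)<3\varepsilon$; it says nothing about $d_Z(\tilde b_n,c_n)$. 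So after extracting a limit $b=\lim c_n\in\supp\nu$, you have no control on $d_Z(b, B_n)$, and the assertion that $b$ is ``by construction a limit of $B_n$-points'' is unjustified. A point of $\supp\nu$ at distance $1$ from $y$ can perfectly well lie in $\mathcal A$.

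The gap is repairable if you argue directly in $Z$: since $F_z(\nu_n,\nu)\to 0$ and the $\nu_n$ are uniformly Ahlfors regular, testing against the bump function $f=\max\{0,1-4d(\cdot,b_n)/\delta\}$ shows $\nu(B_Z(b_n,\delta/4))>0$ for all large $n$, hence there is $c_n\in\supp\nu\cap B_Z(b_n,\delta/4)$. Extract $c_{n_k}\to c$ in the proper space $\supp\nu$; then any approximants $c'_{n_k}\in\supp\nu_{n_k}$ with $c'_{n_k}\to c$ must eventually lie in $B_{n_k}$, since $d_Z(c'_{n_k},b_{n_k})\le d_Z(c'_{n_k},c)+d_Z(c,c_{n_k})+d_Z(c_{n_k},b_{n_k})<\delta$ forces $c'_{n_k}\notin A_{n_k}$. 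That gives $c\in\mathcal B$.

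The paper sidesteps all of this with a cleaner device you did not use: shift-invariance of the tangent space, \ref{lemma:rescalemetrics2}. Rather than centring at $a_n$ and chasing $b_n$, it re-centres the tangents at $b_n$, so that $(\nu_n,b_n)\in\textnormal{Tan}(\mu,x)$; compactness of $\textnormal{Tan}(\mu,x)$ then yields a limit $(\nu,b)$ with $a\in\supp\nu$ and $d(a,b)=1$ automatically. Connectedness of $\supp\nu$ provides a finite $\delta/3$-quasi-path from $a$ to $b$, and a single application of Proposition \ref{prop:properties_of_weak_con} transfers each vertex back into $\supp\nu_N$ for one large $N$, directly contradicting $\textnormal{dist}(A_N,B_N)>\delta$. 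This avoids defining $\mathcal A,\mathcal B$ altogether and never needs to locate a limit of the $b_n$.
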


\begin{proof}
	Fix $\delta>0$ and an $x$ that satisfies \ref{uniformquasi1}, \ref{uniformquasi2}, has a compact tangent space as given by Proposition \ref{prop:tancompact} and such that we may apply \ref{lemma:rescalemetrics1} and \textbf{(2)}. Note that $\mu$-a.e. $x\in X$ satisfies theses requirements. First, recall that Lemma \ref{lemma:ahlforsregular} gives us a $C_x>1$ such that
	\begin{equation}\label{eq:ahlf}
		\frac{1}{C_x}r\leq \nu(B(z,r))\leq C_x r \qquad \forall\,z\in\supp\nu,\,r>0.
	\end{equation} 
	To demonstrate the existence of $R_x$ we work by contradiction, see Figure 1 for a geometric description of the proof. If there exists no such $R_x>0$, then for any $n\in\mathbb{N}$ there exists a tangent measure $(\nu_n,\rho_n,a_n)\in\textnormal{Tan}(\mu,x)$ such that $(\nu_n,\rho_n,a_n)\notin \mathcal{Q}_*(\delta,n,C_x)$. Thus, by Lemma \ref{remark:decompose}, there exists a point $b_n\in \supp\nu_n$ such that $\supp\nu_n\cap B(a_n,n\rho_n(a_n,b_n))$ can be decomposed into sets $A_n$ and $B_n$ containing $a_n$ and $b_n$ respectively with
	\[\textnormal{dist}_{\rho_n}(A_n,B_n)>\delta \rho_n(a_n,b_n).\] 
	We use the $(\nu_n,a_n)$ to construct another sequence of elements of $\textnormal{Tan}(\mu,x)$. By \ref{lemma:rescalemetrics1}, we may rescale the metrics of each tangent and suppose that $\rho_n(a_n,b_n)=1$ (we omit the subsequent scaling of the measure by $C$ for ease of notation). By compactness of $\textnormal{Tan}(\mu,x)$, there exists a subsequence $(\nu_{n_k},\rho_{n_k},a_{n_k})$ such that, for any $\varepsilon>0$, there exists a $K>0$ such that
	\[d_*((\nu_{n_k},a_{n_k}),(\nu_{n_l},a_{n_l}))<\varepsilon\qquad \forall\,k,l>K.\] 
	By Lemma \ref{prop:embeddingsandconvergence}, we may consider $(\nu_{n_k},a_{n_k})$ to be in a common ambient space $(Z,d_Z)$ with each $a_{n_k}$ mapped to a common point $a\in Z$. For brevity we relabel the subsequence to be indexed once again by $n$.
    \par
	Finally, by \ref{lemma:rescalemetrics2} we observe that $(\nu_n,b_n)$ are also tangent measures. Hence there exists a sequence $\{(\nu_n, \rho_n,b_n)\}_{n\in\mathbb{N}}\subset\textnormal{Tan}(\mu,x)$ such that $\supp\nu_n\cap B(a,n)$ has decomposition $A_n\cup  B_n$ satisfying
	\[\textnormal{dist}_{Z}(A_n,B_n)>\delta.\]
	By compactness of $\textnormal{Tan}(\mu,x)$, $\{(\nu_{n},b_{n})\}$ converges in $d_*$ (up to subsequence) to a limit $(\nu,b)\in\textnormal{Tan}(\mu,x)$ with $a\in\supp\nu$. Since $\rho_n(a_n,b_n)=1$ for each $n$, $d_Z(a,b)=1$.
    \par 
    By \ref{uniformquasi2} we have that $(\nu,b)\in\mathcal{C}^*$, thus there must exist a finite $\delta/3$-quasi-path $\{z^i\}\subset\supp\nu$ between $a$ and $b$ and a radius $R$ such that $\{z^i\}\subset B(b,R)$. 
    \par 
	By Proposition \ref{prop:properties_of_weak_con}, for every $z_i$, there must exist a sequence $z^i_n\in\supp\nu_n$ such that $z^i_n\xlongrightarrow{n\rightarrow\infty} z^i$. Since our quasi-path is finite we may pick an $N>R+\delta$ sufficiently large such that $d_Z(z^i_N,z^i)<\delta/3$ for all $i$. However, this implies that we have a path $z^i_N\in\supp\nu_N$ such that
	\[d_Z(z^i_N,z^{i+1}_N)\leq d_Z(z^i_N,z^i)+d_Z(z^i,z^{i+1})+d_Z(z^{i+1},z^{i+1}_N)\leq \delta. \]
	By another application of the triangle inequality we also have that $\{z^i_N\}\subset B(b,R+\delta)\subset B(b,N)$. This contradicts our initial assumption that no such $\delta$-path exists in $\supp\nu_N\cap B(b,N)$. 

%	For all $n>R$ sufficiently large, we have that $d_*((\nu,b),(\nu_n,b_n))<1/R$. Fix one such $n$ and take compact $K_{\nu_n}$ and $K_\nu$, as in Theorem \ref{theorem:largesubsets}, such that

%	\begin{equation}\label{eq:largesubsets}

%		\max\{\nu(B(b,R)\backslash K_\nu),\nu_n(B(b_n,R)\backslash K_{\nu_n})\}<1/R

%	\end{equation}

%	and

%	\begin{equation}\label{eq:getclose}

%		d_{pmGH}((K_\nu,\nu\llcorner K_\nu,b),(K_{\nu_n},\nu_n\llcorner K_{\nu_n},b_n))<3/R.

%	\end{equation}

%	Because $\{z_i\}\subset B(b,R)$, we have that there exists $z_i$ such that $\textnormal{dist}_Z(z_i,A_n)\geq\delta/4$ and  $\textnormal{dist}_Z(z_i,B_n)\geq\delta/4$. Thus we have that $B(z_i,\delta/8)\cap A_n=B(z_i,\delta/8)\cap B_n=\emptyset$ and using (\ref{eq:ahlf}) we see that

%	\[\nu(B(z_i,\delta/8))\geq\frac{\delta}{8C}>1/R.\]

%	Combining this with (\ref{eq:largesubsets}) means there exists $z\in K_\nu\cap B(z_i,\delta/8)$ with $\textnormal{dist}_Z(z,A_n),\textnormal{dist}_Z(z,B_n)\geq\delta/8$. But this means 

%	\[\textnormal{dist}_Z(z,K_{\nu_n})\geq \delta/8>3/R\]

%	which contradicts (\ref{eq:getclose}), thus concluding the proof.

\end{proof}

\begin{figure}[t]
	\centering

	\def\svgwidth{1\textwidth}

	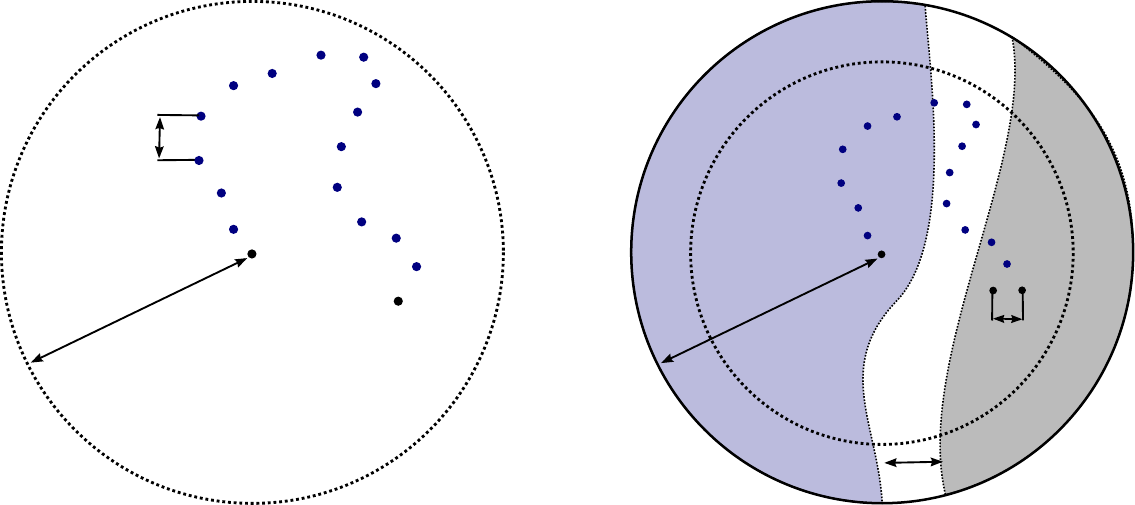

	\caption{The contradiction in the proof of Lemma \ref{lemma:uniformquasi}. On the left, the limit of the sequence $(\nu_n,b_n)$ must have a radius which contains a $\delta$-quasi-path. On the right, for sufficiently large $n$, the existence of the $\delta$-quasi-path implies there must be points of $\supp\nu_n$ in the gap.}

\end{figure}
We now decompose $X$ into countably many Borel pieces on which we have a uniform bound on the $R_x$ and $C_x$ obtained from Proposition \ref{lemma:uniformquasi}.
\begin{lemma}[Lemma 5.4 \cite{bate}]\label{lemma:borel}
	Let $(X,d,\mu)$ be a metric measure space and suppose that $\mathcal{S}\subset\mathbb{M}_*$. Then for any $k,\varepsilon>0$,
	\[C_{k,\varepsilon}:=\{x\in \supp\mu:d_*(T_r(\mu,d,x),\mathcal{S})\leq\varepsilon,\,\forall\,r<k\}\]
	is a closed subset of $X$. In particular, if $E\subset X$ is a Borel set of points satisfying Proposition \ref{prop:tancompact} and $\mathcal{S}\subset\mathbb{M}_*$ is closed, then
	\[\{x\in S:\textnormal{Tan}(\mu,d,x)\subset\mathcal{S}\}\]
	is Borel.
\end{lemma}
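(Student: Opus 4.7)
My plan is to first establish the closedness of $C_{k,\varepsilon}$, and then express the tangent-containment set as a countable Boolean combination of such closed sets. To show $C_{k,\varepsilon}$ is closed, I take $x_n \in C_{k,\varepsilon}$ converging to $x \in \supp\mu$ and fix $r < k$; we must verify $d_*(T_r(\mu,d,x), \mathcal{S}) \leq \varepsilon$. The core of the argument is a continuity-type statement: $T_r(\mu,d,x_n) \to T_r(\mu,d,x)$ in $d_*$, after which the triangle inequality combined with the hypothesis $d_*(T_r(\mu,d,x_n), \mathcal{S}) \leq \varepsilon$ immediately yields the conclusion.

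To prove this convergence when $\mu(\partial B(x,r)) = 0$, I would embed $(X, d/r)$ into (a separable subspace of) $\ell^\infty$ via two Kuratowski maps $\kappa$ and $\kappa_n$ anchored at $x$ and $x_n$ respectively, so that both base points map to $0$. A direct computation yields $\|\kappa(y) - \kappa_n(y)\|_\infty \leq d(x_n,x)/r$ uniformly in $y$, so pulling back any $1/\eta$-Lipschitz $f: \ell^\infty \to [-1,1]$ supported in $B(0, 1/\eta)$ produces functions on $X$ whose pointwise difference is at most $d(x_n,x)/(\eta r)$ and whose supports lie in $B(x, r/\eta)$ and $B(x_n, r/\eta)$. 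Combined with $\mu(B(x_n,r)) \to \mu(B(x,r))$, which follows from dominated convergence applied to $\chi_{B(x_n,r)} \to \chi_{B(x,r)}$ pointwise $\mu$-a.e.\ (using the boundary-null assumption), this forces the $F_0$-distance between the pushforwards to zero.

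For general $r < k$, the set of ``bad'' radii $s \in (0,k)$ with $\mu(\partial B(x,s)) > 0$ is at most countable, so I can choose $r_j \downarrow r$ with each $r_j$ good and $r_j < k$. An analogous Kuratowski argument (now with the metrics $d/r_j$ varying but converging to $d/r$) gives $T_{r_j}(\mu,d,x) \to T_r(\mu,d,x)$ in $d_*$; here $\mu(B(x,r_j)) \downarrow \mu(B(x,r))$ by continuity from above of $\mu$, with no boundary condition needed. Passing to the limit in $d_*(T_{r_j}(\mu,d,x), \mathcal{S}) \leq \varepsilon$ completes the closedness.

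Finally, for the Borel statement, Proposition \ref{prop:tancompact} together with the closedness of $\mathcal{S}$ gives the equivalence: for $x \in E$, $\textnormal{Tan}(\mu,d,x) \subset \mathcal{S}$ if and only if for every $m \in \mathbb{N}$ there is $k \in \mathbb{N}$ such that $x \in C_{1/k, 1/m}$. Hence
\[ \{x \in E : \textnormal{Tan}(\mu,d,x) \subset \mathcal{S}\} = E \cap \bigcap_{m=1}^{\infty} \bigcup_{k=1}^{\infty} C_{1/k,\,1/m}, \]
a Borel set as a countable intersection of countable unions of closed sets. The main obstacle is the $F_0$-estimate in the second paragraph, which has to simultaneously control the perturbation of the base point, the mismatch between the two Kuratowski pullbacks, and the drift of the normalising constant $\mu(B(x_n,r))$.
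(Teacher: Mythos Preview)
The paper does not give its own proof of this lemma; it is quoted directly as Lemma~5.4 of \cite{bate}, so there is no in-paper argument to compare against.

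Your argument is correct. The Kuratowski trick --- embedding $(\supp\mu,d/r)$ into $\ell^\infty$ twice, anchored at $x$ and at $x_n$, so that both base points land at $0$ and the two embeddings differ pointwise by at most $d(x_n,x)/r$ --- is precisely what is needed to bound $F_0$ between the pushforwards, and hence $d_*$. Handling the at-most-countably-many radii with $\mu(\partial B(x,r))>0$ by a secondary limit $r_j\downarrow r$ (now with fixed base point but varying scale) is clean; here the relevant estimate is $\|\kappa(y)/r-\kappa(y)/r_j\|_\infty\le |1/r-1/r_j|\,d(x,y)$, uniformly small on bounded sets, together with $\mu(B(x,r_j))\downarrow\mu(B(x,r))$ by continuity from above. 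The final Borel decomposition $E\cap\bigcap_m\bigcup_k C_{1/k,1/m}$ is the standard one, and your verification of both inclusions using Proposition~\ref{prop:tancompact} and closedness of $\mathcal S$ is right.

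Two small points to make explicit when you write this up: record that $x\in\supp\mu$ (so the normalisation $\mu(B(x,r))>0$) and $\mu\in\mathcal M_{\mathrm{loc}}(X)$ (so the bounded-support integrals and the dominated-convergence step are finite); and note that in Step~1 you are only proving the inequality for the \emph{good} $r$, and Step~2 then feeds those good $r_j$ back in. Neither is a gap, just places a reader might hesitate.
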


\begin{lemma}\label{lemma:closed}
	For any $\delta<1/2$ and $R,C\geq 1$, $\mathcal{Q}_*(\delta,R,C)$ is a closed subset of $(\mathbb{M}_*,d_*)$.
\end{lemma}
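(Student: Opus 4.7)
My plan is to prove the lemma by contradiction, applying Lemma \ref{remark:decompose} in the purported limit and then transferring the resulting separation back to the quasi-paths living in $\supp\mu_n$.

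Assume $(\mu_n,x_n)\in\mathcal{Q}_*(\delta,R,C)$ with $d_*((\mu_n,x_n),(\mu,x))\to 0$, and suppose for contradiction that $(\mu,x)\notin\mathcal{Q}_*(\delta,R,C)$. Proposition \ref{prop:properties_of_weak_con_ahlfors} gives $(\mu,x)\in\mathcal{A}(C)$, so the failure must be $(\supp\mu,x)\notin\mathcal{Q}_p(\delta,R)$, and Lemma \ref{remark:decompose} supplies $y\in\supp\mu$ with a partition $\supp\mu\cap B(x,Rd(x,y))=A\cup B$ such that $x\in A$, $y\in B$ and $\delta':=\textnormal{dist}(A,B)>\delta d(x,y)=:\delta D$. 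Fix $\eta=(\delta'-\delta D)/10$.

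Using Proposition \ref{prop:embeddingsandconvergence} I embed the spaces in a common complete separable ambient $(Z,z)$ with $x_n\equiv x=z$ and $\mu_n\xrightharpoonup{*}\mu$, and Propositions \ref{prop:properties_of_weak_con} and \ref{prop:properties_of_weak_con_ahlfors} furnish $y_n\in\supp\mu_n$ with $y_n\to y$. For each $n$, the hypothesis yields a $\delta$-quasi-path $\gamma_n\subset\supp\mu_n\cap B(x_n,Rd(x_n,y_n))$ from $x_n$ to $y_n$. Set $A_n=\{p\in\gamma_n:\textnormal{dist}(p,A)<\eta\}$ and $B_n=\gamma_n\setminus A_n$; the crux (call it $(\ast)$) is to show that for large $n$, every vertex of $\gamma_n$ lies within $\eta$ of $A\cup B$ in $Z$. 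Granted $(\ast)$, any $p\in B_n$ must actually be within $\eta$ of $B$ (otherwise it would be within $\eta$ of $A$), so a triangle inequality argument gives
\[\textnormal{dist}(A_n,B_n)\geq\delta'-2\eta>\delta D+\eta/2>\delta d(x_n,y_n)\]
for $n$ large. Since $x_n\in A_n$ and $y_n\in B_n$, the $\delta$-quasi-path $\gamma_n$ must contain consecutive vertices in $A_n$ and $B_n$ of distance $\leq\delta d(x_n,y_n)<\textnormal{dist}(A_n,B_n)$, a contradiction.

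The main obstacle is establishing $(\ast)$: the number of vertices $N_n$ of $\gamma_n$ is not a priori uniformly bounded, so I cannot simply pass finitely many coordinates to the limit. I plan to show that, along a subsequence, the finite sets $\gamma_n$ converge in Hausdorff distance to a compact $P\subset\supp\mu\cap B(x,Rd(x,y))=A\cup B$. Prokhorov's theorem applied to the weak*-convergent finite measures $\mu_n\llcorner B(x,RD+1)$ (of uniformly bounded mass by Ahlfors regularity) yields uniform tightness: for every $\varepsilon>0$ some compact $K_\varepsilon\subset Z$ carries all but $\varepsilon$ of each $\mu_n\llcorner B(x,RD+1)$. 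Combined with the uniform lower bound $\mu_n(B(p,\rho))\geq\rho/C$ for $p\in\supp\mu_n$, choosing $\varepsilon=\rho/(2C)$ forces every vertex of $\gamma_n$ to lie within $\rho$ of $K_\varepsilon$; thus $\bigcup_n\gamma_n$ is contained in a totally bounded subset of $Z$ and Blaschke selection extracts the Hausdorff limit $P$. The inclusion $P\subset\supp\mu\cap B(x,Rd(x,y))$ follows from Proposition \ref{prop:properties_of_weak_con_ahlfors} and the closedness of balls, and $(\ast)$ is then immediate from $d_H(\gamma_n,P)<\eta$ for $n$ large.
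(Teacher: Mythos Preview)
Your proof is correct and reaches the conclusion by a different technical route than the paper.  Both arguments begin identically: embed everything in a common $(Z,z)$, assume $(\mu,x)\notin\mathcal{Q}_*(\delta,R,C)$, and invoke Lemma~\ref{remark:decompose} to split $\supp\mu\cap B(x,Rd(x,y))$ into well-separated pieces $A,B$.  From here the paper argues purely measure-theoretically: it thickens $A,B$ to open sets $A_1,A_2$, observes that the closed gap $G=B(x,Rd(x,y))\setminus(A_1\cup A_2)$ satisfies $\mu(G)=0$, notes that each $\delta$-quasi-path in $\supp\mu_i$ must drop a vertex $z_i$ into $G$, and then uses Ahlfors regularity to get $\mu_i(G)$ bounded below, so that weak* convergence forces $\mu(G)>0$, a contradiction.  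You instead track the quasi-paths $\gamma_n$ geometrically: Prokhorov tightness plus the uniform lower Ahlfors bound squeezes $\bigcup_n\gamma_n$ into a totally bounded set, Blaschke selection extracts a Hausdorff limit $P\subset\supp\mu\cap B(x,Rd(x,y))=A\cup B$, and the resulting proximity $(\ast)$ contradicts the $\delta$-jump condition directly.  The paper's argument is shorter and avoids the compactness machinery; yours makes the path-crossing geometry more explicit and is more careful about the discrepancies $y_n\neq y$ and $d(x,y_n)\neq d(x,y)$, which the paper glosses over.  One cosmetic point: in your Prokhorov step, the restricted measures $\mu_n\llcorner B(x,RD+1)$ need not themselves converge weakly (boundary mass), so it is cleaner to apply Prokhorov to $\phi\mu_n$ for a compactly supported continuous cutoff $\phi$ equal to $1$ on $B(x,RD+1)$; since $\phi\mu_n\to\phi\mu$ weakly against $C_b(Z)$, tightness follows and dominates $\mu_n\llcorner B(x,RD+1)$.
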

\begin{proof}
	Take a sequence $(\mu_i,x_i)\in \mathcal{Q}_*(\delta,R,C)$ that converges to $(\mu,x)\in \mathbb{M}_*$. By Proposition \ref{prop:embeddingsandconvergence} we may consider the sequence and limit to be contained in complete and separable metric space $(X,d)$ with each $x_i$ mapped to $x$. Then by Proposition \ref{prop:Fequivweak*} we have $\mu_i\xrightharpoonup{*}\mu$.
    \par 
	Let $y\in\supp\mu$. We will demonstrate that there exists a $\delta$-quasi-path between $x$ and $y$. First, by Proposition \ref{prop:properties_of_weak_con_ahlfors}, we know that $\mu\in\mathcal{A}(C)$. By Proposition \ref{prop:properties_of_weak_con}, $y\in \supp\mu$ implies that there exists a sequence $y_i\in\supp\mu_i$ such that $y_i\rightarrow y$. Since for each $i$ we have that $\mu_i\in \mathcal{Q}_*(\delta,R,C)$, we also have a sequence of $\delta$-quasi-paths $\{y_i^n\}_{n\in\mathbb{N}}\subset \supp\mu_i\cap B(x,Rd(x,y_i))$. 
    \par 
	Let us assume that $\mu\notin \mathcal{Q}_*(\delta,R,C)$, then, as in Lemma \ref{remark:decompose}, there exists a point $y\in \supp\mu$ and two open sets $A_1,A_2\subset B(x,Rd(x,y))$ such that
	\begin{enumerate}[left=1cm, label=\textbf{(\arabic*)},ref=\textbf{(\arabic*)}]
		\item $\supp\mu\cap B(x,Rd(x,y))\subset A_1\cup A_2$; \label{spiderplant}
		\item $x\in A_1$ and $y\in A_2$;
		\item $\textnormal{dist}(A_1,A_2)>\delta d(x,y)$.
	\end{enumerate}
	Then $B= B(x,Rd(x,y))\backslash A_1\cup A_2$ is disjoint from $\supp\mu$. Now, because each $\mu_i\in \mathcal{Q}_*(\delta,R,C)$, no such similar decomposition of $B(x,Rd(x,y))$ is possible, i.e. for each $\mu_i$ there exists some $z_i\in B\cap \supp\mu_i$. Hence, by using the Ahlfors regularity of $\mu_i$ we see that we must have
	\[\mu_i(B)>0.\]
	Thus
	\begin{equation*}
		\mu(B)\geq \limsup_{i\rightarrow \infty}\mu_i(B)>0
	\end{equation*}
	which directly contradicts item \ref{spiderplant}, concluding the proof.
\end{proof}

\begin{corollary}\label{decomposemebaby}
	Let $(X,d)$ be a complete metric space and $E\subset X$ be $\mathcal{H}^1$-measurable with $\mathcal{H}^1(E)<\infty$. Assume that for $\mathcal{H}^1$-a.e. $x\in E$,
	\[\Theta_*(E,x)>0,\qquad\text{and}\qquad\textnormal{Tan}(X,d,\mathcal{H}^1\llcorner E,x)\subset \mathcal{C}^*.\]
	Then for any $\delta>0$, there exists a compact set $F\subset E$ with $\mathcal{H}^1(F)>0$ satisfying the following:
	\begin{enumerate}[left=1cm, label=\textbf{S\arabic*.},ref=\textbf{S\arabic*.}]
		\item There exists $\hat{d}>0$ such that for $\mathcal{H}^1$-a.e. $x\in F$,
		\begin{equation*}
			\hat{d}\leq\Theta_*(F,x),\qquad \Theta^*(F,x)\leq 1.
		\end{equation*}\label{me}
		\item There exists $R>0$ such that for $\mathcal{H}^1$-a.e. $x\in F$, \[\textnormal{Tan}(\mathcal{H}^1\llcorner F,x)\subset \mathcal{Q}_*(\delta,R,1/\hat{d})\cap \mathcal{C}^*.\]\label{you}
	\end{enumerate} 
\end{corollary}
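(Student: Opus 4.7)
My plan is to apply Proposition \ref{lemma:uniformquasi} to extract quasi-path parameters $R_x,C_x$ at $\mathcal{H}^1$-a.e.\ point $x\in E$ for the fixed $\delta>0$, then partition $E$ into countably many Borel pieces on which $R_x$, $C_x$ and the densities are uniformly controlled, select one such piece of positive measure, and extract a compact subset as $F$. Routine density and tangent-restriction results will then transfer the required properties from $E$ onto $F$.

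In detail: Remark \ref{remark:densedouble} shows $\mathcal{H}^1\llcorner E$ is asymptotically doubling, so Theorem \ref{lebesguepoints} applies, and Theorem \ref{theorem:density} gives $\Theta^{*1}(E,x)\leq 1$ for $\mathcal{H}^1$-a.e.\ $x\in E$. Together with the hypothesis these put us in the setting of Proposition \ref{lemma:uniformquasi}, which yields at $\mathcal{H}^1$-a.e.\ $x\in E$ parameters $R_x,C_x>0$ with $\textnormal{Tan}(\mathcal{H}^1\llcorner E,x)\subset\mathcal{Q}_*(\delta,R_x,C_x)$. For each $n\in\mathbb{N}$, I then consider
\[E_n:=\bigl\{x\in E:\Theta^1_*(E,x)\geq 1/n,\ \Theta^{*1}(E,x)\leq 1,\ \textnormal{Tan}(\mathcal{H}^1\llcorner E,x)\subset\mathcal{Q}_*(\delta,n,n)\bigr\}.\]
Each $E_n$ is Borel: the density conditions are standard, and the tangent condition is Borel by Lemma \ref{lemma:borel} since $\mathcal{Q}_*(\delta,n,n)$ is closed in $(\mathbb{M}_*,d_*)$ by Lemma \ref{lemma:closed} (after first replacing $E$ by a Borel subset of full measure). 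Since $\bigcup_n E_n$ has full $\mathcal{H}^1$-measure in $E$, countable subadditivity yields some $N$ with $\mathcal{H}^1(E_N)>0$. I set $\hat{d}:=1/N$ and $R:=N$, and use inner regularity of the finite Borel measure $\mathcal{H}^1\llcorner E$ to produce a compact $F\subset E_N$ with $\mathcal{H}^1(F)>0$.

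Property \ref{me} for $F$ is then immediate from Corollary \ref{cor:densityofrestrictedsets}: $\Theta^1_*(F,x)=\Theta^1_*(E,x)\geq\hat{d}$ and $\Theta^{*1}(F,x)=\Theta^{*1}(E,x)\leq 1$ for $\mathcal{H}^1$-a.e.\ $x\in F$. For property \ref{you}, Theorem \ref{lebesguepoints} applied to the asymptotically doubling measure $\mathcal{H}^1\llcorner E$ shows that a.e.\ $x\in F$ is a Lebesgue density point of $F$, so Corollary \ref{cor:restrict} gives
\[\textnormal{Tan}(\mathcal{H}^1\llcorner F,x)=\textnormal{Tan}(\mathcal{H}^1\llcorner E,x)\subset\mathcal{Q}_*(\delta,N,N)\cap\mathcal{C}^*=\mathcal{Q}_*(\delta,R,1/\hat{d})\cap\mathcal{C}^*,\]
which is exactly the required inclusion.

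I do not anticipate a serious obstacle here: the genuinely hard analytic work was carried out in Proposition \ref{lemma:uniformquasi}, which promoted the qualitative hypothesis of connected tangents to the quantitative quasi-path condition, so what remains is a pigeonhole argument over a Borel decomposition combined with the Lebesgue-density and tangent-restriction machinery already established in the preliminaries.
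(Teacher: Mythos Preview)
Your proposal is correct and follows essentially the same approach as the paper: apply Proposition \ref{lemma:uniformquasi} to obtain pointwise parameters, use Lemmas \ref{lemma:closed} and \ref{lemma:borel} to get Borel level sets, pigeonhole to a piece of positive measure, extract a compact subset, and transfer densities and tangents via Corollary \ref{cor:densityofrestrictedsets} and Corollary \ref{cor:restrict}. The only cosmetic difference is that the paper first fixes the density bound $\hat d$ on a subset $E'$ (so that the Ahlfors constant is automatically $\leq 1/\hat d$ there by Corollary \ref{remark:ahlfors}) and then searches only over $R$, whereas you control both the density and quasi-path parameters simultaneously via a single integer $N$ with $\hat d=1/N$, $R=N$; both bookkeepings work.
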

\begin{proof}
	Fix $\delta>0$.  By Theorem \ref{theorem:density}, we can find a set $E'\subset E$ of positive measure with constants $\hat d,\,r_0>0$ such that for all $x\in E'$,
	\begin{equation*}
		2\hat d r\leq \mathcal{H}^1(E\cap B(x,r))\leq 4r\qquad \forall \,r<r_0.
	\end{equation*}
	For any $R>0$, define the set
	\[F_R=\{x\in E: \textnormal{Tan}(\mathcal{H}^1\llcorner E,x)\subset \mathcal{Q}_{*}(\delta,R,1/\hat{d}) \}.\] 
	By Proposition \ref{lemma:uniformquasi} and Proposition \ref{lemma:ahlforsregular}, we know that $F_R$ monotonically increase with $R$ to cover $\mathcal{H}^1$-almost all of $E'$. By Lemma \ref{lemma:closed} and Lemma \ref{lemma:borel}, $F_R$ are measurable thus there exists $R>0$ such that
	\[\mathcal{H}^1(E'\cap F_R)>0.\]
	Because $E'\cap F_R$ is $\mathcal{H}^1$-measurable, we may find some compact subset $F\subset E'\cap F_R$ with $\mathcal{H}^1(F)>0$. Using Corollary \ref{cor:densityofrestrictedsets} we have that for $\mathcal{H}^1$-a.e. $x\in F$
	\begin{equation*}
		\hat{d}\leq\Theta_*(F,x),\qquad \Theta^*(F,x)\leq 1.
	\end{equation*}
	So $F$ satisfies \ref{me}. Recalling Remark \ref{remark:densedouble}, $\mathcal{H}^1\llcorner E$ is asymptotically doubling, thus by Lemma \ref{cor:restrict}, for $\mathcal{H}^1$-a.e. $x\in F$,
	\begin{equation*}
		\textnormal{Tan}(\mathcal{H}^1\llcorner F,x)=\textnormal{Tan}(\mathcal{H}^1\llcorner E,x)\subset \mathcal{Q}_*(\delta,R,1/\hat{d})\cap \mathcal{C}^*,
	\end{equation*}
	which gives us \ref{you} as required.
\end{proof}

	\section{Besicovitch Partitions}\label{besicovitchpartitions}
	In this section we show that a large proportion of a purely 1-unrectifiable set in a metric space can be covered by \emph{Besicovitch partitions}.
A Besicovitch partition is an adaptation of Besicovitch's ideas on circle pairs in order to work with quasi-connected tangents.
Indeed, in the next section we will see that a set with a suitable collection of Besicovitch partitions cannot have quasi-path connected tangents.

In place of the Besicovitch covering theorem, we use the Ahlfors type estimate \eqref{sportsdirect} to bound the size of the cover.
\begin{notation}
	For a collection $\mathcal{B}$ and a set $B\in\mathcal{B}$, let 
	\[B_\cap:=\{B'\in \mathcal{B}:B\cap B'\neq \emptyset\}.\]
\end{notation}
\begin{lemma}\label{lemma:uniformcovering}
	Let $E\subset X$ be a totally bounded set such that for some $r_0,C>0$ we have
	\begin{equation}\label{sportsdirect}
		\frac{1}{C}r^n\leq \mathcal{H}^n(E\cap B(x,r))\leq Cr^n\qquad \forall r<r_0,\,x\in E.
	\end{equation}
	Then for any $r<r_0$, there exists a finite cover $\mathcal{B}$ of $E$ by balls such that
	\begin{enumerate}[left=1cm, label=\textbf{T\arabic*.},ref=\textbf{T\arabic*.}]
		\item \label{uniformrad} the radius $r$ of every ball is the same;
		\item \label{closecover} $\sum_{B\in\mathcal{B}}\textnormal{diam}(B)\leq 8^nC\mathcal{H}^n(E)$;
		\item \label{martin} for any $B\in \mathcal{B}$, $\# B_\cap\leq C^2 12^n$.
	\end{enumerate}
\end{lemma}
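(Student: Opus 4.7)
The plan is to build $\mathcal{B}$ from a maximal $r$-separated subset of $E$; the three conclusions then follow by applying the Ahlfors bound \eqref{sportsdirect} at scales $r$, $r/2$ and $5r/2$.

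First I would use the compactness of $E$ to select a finite set $\{x_i\}_{i=1}^N\subset E$ maximal with respect to the property $d(x_i,x_j)\ge r$ for $i\ne j$, and set $\mathcal{B}=\{B(x_i,r)\}_{i=1}^N$. Maximality immediately yields the cover: any $y\in E$ must lie within distance $r$ of some $x_i$, as otherwise $\{x_i\}\cup\{y\}$ would remain $r$-separated. Since every chosen ball has radius $r$, this gives \ref{uniformrad}.

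For \ref{closecover} I would exploit that the half-radius balls $\{B(x_i,r/2)\}_i$ are pairwise disjoint (the centers being at mutual distance $\ge r$). Applying the lower bound of \eqref{sportsdirect} to each and summing gives
\[
N\cdot\frac{(r/2)^n}{C}\le\sum_{i=1}^N\mathcal{H}^n(E\cap B(x_i,r/2))\le \mathcal{H}^n(E),
\]
so $N\le 2^nC\,\mathcal{H}^n(E)/r^n$. For the one-dimensional application this gives $\sum_{B\in\mathcal{B}}\textnormal{diam}(B)=2rN\le 4C\,\mathcal{H}^1(E)\le 8C\,\mathcal{H}^1(E)$, matching the stated constant (and the same bookkeeping applied to $\textnormal{diam}(B)^n$ yields $4^nC\,\mathcal{H}^n(E)\le 8^nC\,\mathcal{H}^n(E)$ in general).

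For \ref{martin}, fix $B=B(x_i,r)\in\mathcal{B}$; any $B(x_j,r)\in B_\cap$ satisfies $d(x_i,x_j)<2r$, hence $B(x_j,r/2)\subset B(x_i,5r/2)$. The half-radius balls remain pairwise disjoint, so summing the lower Ahlfors bound over $j\in B_\cap$ and comparing with the upper bound on $B(x_i,5r/2)$ gives
\[
\#B_\cap\cdot\frac{(r/2)^n}{C}\le\mathcal{H}^n(E\cap B(x_i,5r/2))\le C\,(5r/2)^n,
\]
whence $\#B_\cap\le 5^nC^2\le 12^nC^2$. There is no serious obstacle to carrying this out; the one structural choice that matters is using a maximal $r$-separated set (rather than, say, a Vitali-type $5r$-selection), so that a single collection simultaneously covers $E$ and has pairwise disjoint half-radius neighbourhoods.
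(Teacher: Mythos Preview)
Your proof is correct and follows essentially the same approach as the paper: build a separated net in $E$ and apply the Ahlfors bounds \eqref{sportsdirect} at two comparable scales to control both the total diameter and the local multiplicity. The only cosmetic difference is that the paper first covers $E$ by $r/2$-balls and then thins to get $r/2$-separated centres (hence uses $r/4$-balls for disjointness, yielding the constants $8^n$ and $12^n$), whereas your direct maximal $r$-separated set gives the slightly sharper $4^n$ and $5^n$.
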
	
\begin{proof}
	Fix $r<r_0$ and let $\mathcal{C}=\{B(x_i,r/2):x_i\in E\}_{i=1}^N$ be a finite covering of $E$, the existence of which is assured by the assumption of compactness. We first modify this covering such that the $x_i$ are distance at least $r/2$ apart. Iteratively pick balls such that the centre of one is not included in another and collect into a new collection $\mathcal{C}'$ until no this is no longer possible. This terminates after finitely many steps. Set $\mathcal{B}=2\mathcal{C}'$. We have that $\mathcal{B}$ covers $\mathcal{C}$ and is made of balls of radius $r$ with $r/2$-separated centres.
	
	%Then $2\mathcal{B}'$ covers $\mathcal{B}$: if $B(x_i,r/2)\in \mathcal{B}\cap (2mathcal{B}')^c$ then $x_i\in B(x_j,r/2)$ for some $B(x_j,r/2)\in mathcal{B}'$ else $B(x_i,r/2)$ satisfies the disjointedness requirement and can thus be added to $mathcal{B}'$. 
	
	Now pick a ball $B\in \mathcal{B}$ and note that $\bigcup_{A\in B_\cap} A \subset 3B$ and that the balls formed taking the same centres but a quarter the radius, $B_\cap/4$, are disjoint. Thus, using (\ref{sportsdirect}),
	\begin{align*}
		\frac{1}{C}(r/4)^n(\# B_\cap)&\leq \mathcal{H}^n(E\cap \bigcup_{A\in B_\cap}A/4)\\
		&\leq \mathcal{H}^n(E \cap 3B)\leq C(3r)^n
	\end{align*}
	which gives \ref{martin}.

	To prove \ref{closecover}, note that, since the elements of $\mathcal B/4$ are disjoint,
	\[
	\sum_{B_i\in \mathcal{B}}\mathcal{H}^n(E\cap B(x_i,r/4))
	=
	\mathcal{H}^n(E\cap \bigcup_{B\in \mathcal{B}} B/4)
	\leq
	\mathcal{H}^n(E).\]
	Also, by (\ref{sportsdirect}),
	\[
	(2r)^n = 8^n C \left(\frac{1}{C}(r/4)^n\right)
	\leq
	8^n C \mathcal{H}^n(E\cap B(x_i,r/4)).\]
	Combining these two inequalities gives \ref{closecover}.
\end{proof}

	We now consider continua in metric space.
	Recall that a continuum $L\subset X$ is a compact connected set with $\mathcal{H}^1(L)<\infty$. Recall also that a set with an underlined subscript denotes a closed neighbourhood that set:
\[L_{\boldsymbol{\underline\varepsilon}}:=\{x\in X:\textnormal{dist}(x,L)\leq \varepsilon\}.\]
\begin{lemma}\label{lemma:urgh}
	Let $F\subset X$ be a compact set, $l>0$ and $L^i$ be a sequence of continua in $X$ with $\mathcal{H}^1(L^i)<l$ such that  $\textnormal{dist}(F,x)\leq 1/i$ for all $x\in L^i$. Then there exists a subsequence and a continuum $L$ in $F$ such that 
	\begin{enumerate}
		\item $\mathcal{H}^1(L)\leq l$;
		\item for all $m\in\mathbb{N}$, there exists an $I>0$ such that (up to subsequence)
		\[L_{\boldsymbol{\underline{\nicefrac{1}{m}}}}\supset L_{\boldsymbol{\underline{\nicefrac{1}{i}}}}^{i}\qquad\forall\,i>I.\]
	\end{enumerate}
\end{lemma}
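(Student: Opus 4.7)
The plan is to extract a Hausdorff-convergent subsequence from the $L^i$ and take $L$ to be the limit. The essential ingredients are the compactness of $F$ (which localizes the $L^i$), a uniform entropy bound on continua of length less than $l$ (which gives Hausdorff precompactness even in a non-proper ambient space), and Gołąb's semicontinuity theorem for $\mathcal{H}^1$ on continua (which controls the length of the limit).

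First, I would localize the sequence. Pick $p_i\in L^i$; by hypothesis there is $q_i\in F$ with $d(p_i,q_i)\leq 2/i$, and by compactness of $F$ a subsequence satisfies $q_i\to q\in F$, hence $p_i\to q$. Since each $L^i$ is a continuum with $\mathcal{H}^1(L^i)<l$ we have $\textnormal{diam}(L^i)\leq l$, so for $i$ large the whole sequence is contained in $B(q,l+1)$. Next, any continuum of $\mathcal{H}^1$-measure at most $l$ admits a $1$-Lipschitz parameterisation from a closed interval of length at most $l$ (a classical result for continua of finite length in complete metric spaces), and is therefore covered by at most $\lceil l/\varepsilon\rceil+1$ balls of radius $\varepsilon$. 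This entropy bound is uniform in $i$, so a standard diagonal argument produces a subsequence $L^{i_k}$ that is Cauchy in Hausdorff distance; by completeness of $X$ it converges to a compact set $L\subset X$.

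Second, I would check that $L$ is a continuum contained in $F$. If $x\in L$, pick $x_k\in L^{i_k}$ with $x_k\to x$; then $\textnormal{dist}(x_k,F)\leq 1/i_k\to 0$, and since $F$ is closed, $x\in F$. Connectedness of $L$ is the standard fact that a Hausdorff limit of connected compact sets sitting in a common bounded totally bounded region is connected (separate $L=U\cup V$ would, by normality and Hausdorff convergence, separate every sufficiently late $L^{i_k}$). The bound $\mathcal{H}^1(L)\leq l$ then follows from Gołąb's lower semicontinuity theorem for $\mathcal{H}^1$ under Hausdorff convergence of continua, which is valid in arbitrary metric spaces:
\[\mathcal{H}^1(L)\leq \liminf_{k\to\infty}\mathcal{H}^1(L^{i_k})\leq l.\]

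Finally, the neighbourhood statement is an easy unwinding of Hausdorff convergence. Given $m$, take $K$ so large that $d_H(L^{i_k},L)<1/(2m)$ and $1/i_k<1/(2m)$ for $k\geq K$; then
\[L^{i_k}_{\boldsymbol{\underline{1/i_k}}}\subset L_{\boldsymbol{\underline{1/(2m)+1/i_k}}}\subset L_{\boldsymbol{\underline{1/m}}},\]
and setting $I=i_K$ gives (2).

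I expect the subtle steps to be (i) establishing Hausdorff precompactness without assuming $X$ is proper, which is where the uniform covering bound from the length hypothesis is essential, and (ii) invoking Gołąb's theorem in the metric-space setting, which replaces the usual Euclidean proof but still yields lower semicontinuity for continua. The remaining verifications (the limit lies in $F$, connectedness of the limit, and the neighbourhood containment) are routine consequences of Hausdorff convergence.
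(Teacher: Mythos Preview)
Your overall strategy is sound and the verifications at the end (that $L\subset F$, that $L$ is connected, the Gołąb bound, and the neighbourhood containment) are all correct. There is, however, one genuine misattribution in the precompactness step. A uniform entropy bound on the $L^i$ does \emph{not} by itself yield a Hausdorff--Cauchy subsequence in a non-proper ambient space: the singletons $\{e_i\}\subset\ell^2$ are each covered by one ball of any radius, yet $d_H(\{e_i\},\{e_j\})=\sqrt{2}$ for $i\neq j$. What actually gives precompactness here is the hypothesis $L^i\subset F_{\boldsymbol{\underline{1/i}}}$ together with compactness of $F$: given $\varepsilon>0$, finitely many $\varepsilon/2$-balls cover $F$, hence the corresponding $\varepsilon$-balls cover $F_{\boldsymbol{\underline{\varepsilon/2}}}\supset L^i$ for all $i>2/\varepsilon$, and the finitely many remaining $L^i$ are individually compact. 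Thus $\overline{\bigcup_i L^i}$ is compact and Blaschke's selection theorem applies. (A minor correction: the classical parameterisation of a continuum of measure $<l$ is from $[0,2l]$, not $[0,l]$.) With this repair your argument is complete.

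The paper's route is closely related but more compressed. It invokes the same $1$-Lipschitz parameterisations $\gamma_i\colon[0,2l]\to X$, but then applies an Arzelà--Ascoli theorem directly to the \emph{maps} (again using that the images accumulate on the compact $F$) to extract a uniform limit $\gamma\colon[0,2l]\to F$, and sets $L$ to be its image. This yields compactness, connectedness, and $L\subset F$ in one stroke as properties of a continuous image of an interval, and the Hausdorff convergence needed for (2) follows from uniform convergence of the parameterisations. Your route via Hausdorff limits of sets plus a separate appeal to Gołąb is a perfectly valid alternative; the paper's version simply trades two classical compactness/semicontinuity statements for a single Arzelà--Ascoli citation.
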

\begin{proof}
	By \cite[Theorems 4.2.1 \& 4.4.8]{luigi}, the $L^i$ may be covered by 1-Lipschitz images of $[0,2l]$.
	By \cite[Theorem 4.4.3]{luigi}, there exists a subsequence of these maps converging uniformly to some $\gamma\colon [0,2l]\to F$.
	Setting $L=\gamma([0,2l])$ gives the desired result.
\end{proof}

The following lemma is a variant of \cite[Lemma 8]{paper:besi2}.
In place of an ambient Euclidean space, we will use the Kuratowski embedding to isometrically embed a separable metric space $X$ into $\ell_\infty$, the space of real bounded sequences with the supremum norm.
In order to use compactness arguments, we note that, if $F\subset \ell_\infty$ is compact, then so is $CC(F)$, the closed convex hull of $F$.
\begin{lemma}\label{lemma:smallballs}%Besi lemma 8 
	Let $X$ be a metric space and let $F\subset X$ be compact, $\mathcal{H}^1$-measurable and purely 1-unrectifiable with $\mathcal{H}^1(F)<\infty$. For $ \varepsilon,\,l>0 $, there exists a radius $ r>0 $ such that for any continuum $L$ in $X$ with $\mathcal{H}^1(L)<l$,
	\[ \mathcal{H}^1(F\cap L_{\boldsymbol{\underline{r}}})<\varepsilon. \]
\end{lemma}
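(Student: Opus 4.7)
I proceed by contradiction. Suppose the statement fails for some $\varepsilon, l > 0$: there exists a sequence of continua $L^i \subset X$ with $\mathcal{H}^1(L^i) < l$ and $\mathcal{H}^1(F \cap L^i_{\boldsymbol{\underline{1/i}}}) \geq \varepsilon$. In particular $L^i$ meets $F_{\boldsymbol{\underline{1/i}}}$, and since $\textnormal{diam}(L^i) \leq \mathcal{H}^1(L^i) < l$, compactness of $F$ lets me pass to a subsequence so that each $L^i$ lies in a fixed bounded neighbourhood of some $q \in F$.

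The overall strategy is to produce, from $\{L^i\}$, an at-most-countable family of continua $L_1, L_2, \ldots \subset F$ with $\sum_k \mathcal{H}^1(L_k) \leq l$ such that for large $i$, $L^i_{\boldsymbol{\underline{1/i}}} \subset (\bigcup_k L_k)_{\boldsymbol{\underline{\rho_i}}}$ with $\rho_i \to 0$. Each $L_k$ is a 1-Lipschitz image of an interval and hence 1-rectifiable, so pure 1-unrectifiability of $F$ gives $\mathcal{H}^1(F \cap \bigcup_k L_k) = 0$; outer regularity of $\mathcal{H}^1 \llcorner F$ then yields $\rho > 0$ with $\mathcal{H}^1(F \cap (\bigcup_k L_k)_{\boldsymbol{\underline{\rho}}}) < \varepsilon/2$. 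Taking $i$ large enough that $\rho_i < \rho$ forces $\mathcal{H}^1(F \cap L^i_{\boldsymbol{\underline{1/i}}}) \leq \mathcal{H}^1(F \cap (\bigcup_k L_k)_{\boldsymbol{\underline{\rho}}}) < \varepsilon/2$, which contradicts the standing assumption.

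To construct the $L_k$ I use the Kuratowski embedding to identify $X$ with a subset of $\ell_\infty$, noting the remark preceding the statement that $F$ and its closed convex hull $CC(F)$ are compact in $\ell_\infty$, and reduce to Lemma \ref{lemma:urgh}. Parametrize each $L^i$ as a 1-Lipschitz surjection $\gamma_i \colon [0, 2l] \to L^i$ (using \cite[Theorem 4.4.8]{luigi}). The set $\gamma_i^{-1}(F_{\boldsymbol{\underline{1/i}}})$ is closed and decomposes into maximal closed subintervals $\{I_j^i\}_j$; the images $M_j^i := \gamma_i(I_j^i)$ are subcontinua of $L^i$ contained in $F_{\boldsymbol{\underline{1/i}}}$ with $\sum_j \mathcal{H}^1(M_j^i) \leq 2l$. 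Since any $y \in F \cap L^i_{\boldsymbol{\underline{1/i}}}$ lies within $1/i$ of $L^i \cap F_{\boldsymbol{\underline{1/i}}} = \bigcup_j M_j^i$, we obtain $F \cap L^i_{\boldsymbol{\underline{1/i}}} \subset \bigcup_j (M_j^i)_{\boldsymbol{\underline{1/i}}}$. Applying Lemma \ref{lemma:urgh} diagonally across $i$ and $j$, with limits taken in the compact set $F \subset CC(F) \subset \ell_\infty$, yields the required continua $L_k \subset F$.

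The main obstacle is the diagonal extraction, since the number of components $\{M_j^i\}_j$ need not be uniformly bounded in $i$. I expect to manage this by discarding components whose $\mathcal{H}^1$-measure falls below a threshold $\delta > 0$: their contribution to $\mathcal{H}^1(F \cap \bigcup_j (M_j^i)_{\boldsymbol{\underline{1/i}}})$ can be bounded via Lemma \ref{lemma:hausdorff}, using the diameter estimate $\textnormal{diam}((M_j^i)_{\boldsymbol{\underline{1/i}}}) \leq \delta + 2/i$, and absorbed into the $\varepsilon/2$ error budget. This reduces the problem to finitely many non-negligible components per $i$, to which Lemma \ref{lemma:urgh} applies by finite diagonal extraction in the compact ambient $F \subset \ell_\infty$, thereby producing the family $\{L_k\}$ as required.
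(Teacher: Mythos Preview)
Your threshold argument has a genuine gap. Lemma~\ref{lemma:hausdorff} bounds $\mathcal{H}^1(F\cap\bigcup_j A_j)$ by $\sum_j\textnormal{diam}(A_j)+\varepsilon'$, so to absorb the discarded components you need the \emph{sum} of their neighbourhood diameters to be small. Discarding by the threshold $\mathcal{H}^1(M_j^i)<\delta$ only controls each individual diameter, not the sum: there is no bound on the number of small components (indeed $\gamma_i^{-1}(F_{\boldsymbol{\underline{1/i}}})$ can have infinitely many components, even uncountably many singletons), and even the finite part satisfies only $\sum_{\text{small }j}\textnormal{diam}(M_j^i)\leq\sum_j|I_j^i|\leq 2l$, which is not small in terms of $\delta$. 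The extra $2/i$ per component makes things worse. So Lemma~\ref{lemma:hausdorff} yields at best a bound of order $2l$, which cannot be pushed below $\varepsilon/2$.

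The paper sidesteps this entirely with one clean trick that you set up but do not use: convexity of $CC(F)$ in $\ell_\infty$. Rather than fragmenting $L^i$ into its pieces near $F$, the paper replaces every excursion of $L^i$ outside $CC(F)_{\boldsymbol{\underline{1/i}}}$ by the straight segment joining its endpoints; since $CC(F)_{\boldsymbol{\underline{1/i}}}$ is convex this shortens $L^i$ and keeps it a single continuum with $\textnormal{dist}(CC(F),x)\leq 1/i$ for all $x\in L^i$, while leaving $F\cap L^i_{\boldsymbol{\underline{1/i}}}$ unchanged. A \emph{single} application of Lemma~\ref{lemma:urgh} (with the compact set $CC(F)$ in place of $F$) then produces one limit continuum $L\subset CC(F)$ with $L_{\boldsymbol{\underline{1/m}}}\supset L^i_{\boldsymbol{\underline{1/i}}}$ eventually, hence $\mathcal{H}^1(F\cap L_{\boldsymbol{\underline{1/m}}})\geq\varepsilon$ for all $m$, and continuity from above gives $\mathcal{H}^1(F\cap L)\geq\varepsilon$. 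Since $L$ is rectifiable, this contradicts pure unrectifiability. No component decomposition, no threshold, no diagonal extraction.

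The key point you are missing is that the limit continuum need not lie in $F$; it only needs to be rectifiable and meet $F$ in positive measure. Allowing $L\subset CC(F)$ rather than insisting on $L_k\subset F$ is precisely what lets one keep $L^i$ in one piece.
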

\begin{proof}
	Identify $X$ with its isometric image in $\ell_\infty$ and suppose that the hypothesis does not hold for some $\varepsilon>0$. Then for all $i\in\mathbb{N}$, there exists a sequence of continua $L^i\subset X$ such that
	\[\mathcal{H}^1(L^i)<l\]
	but 
	\[\mathcal{H}^1(F\cap L_{\boldsymbol{\underline{\nicefrac{1}{i}}}}^{i})\geq \varepsilon.\]
	We may additionally assume that $\textnormal{dist}(CC(F),x)\leq 1/i$ for all $x\in L^i$. Indeed,
	because $CC(F)$ is convex, we may replace any part of $L^i$ that lies outside by the line segment connecting the end points of this part, creating a shorter curve.
	Thus we may apply Lemma \ref{lemma:urgh} to find that there exists a continuum $L\subset CC(F)$, such that $\mathcal{H}^1(L)\leq l$ and such that (for a relabelled subsequence of $L^i$ and) all $m\in\mathbb{N}$, there exists an $I>0$ such that
	\[L_{\boldsymbol{\underline{\nicefrac{1}{m}}}}\supset L_{\boldsymbol{\underline{\nicefrac{1}{i}}}}^{i}\qquad\forall\,i>I.\]
	Thus we have
	\[\mathcal{H}^1(F\cap L_{\boldsymbol{\underline{\nicefrac{1}{m}}}} )\geq \mathcal{H}^1(F\cap L_{\boldsymbol{\underline{\nicefrac{1}{i}}}}^{i})\geq\varepsilon. \]
	Then observing the fact that $L=\bigcap_{m}L_{\boldsymbol{\underline{\nicefrac{1}{m}}}}$, and that $\mathcal{H}^1(L)\leq l$, we may use the continuity of $\mathcal{H}^1$ from above to see that
	\begin{equation}\label{pencilcase}
		\mathcal{H}^1(F\cap L)=\mathcal{H}^1(F\cap \bigcap_{m}L_{\boldsymbol{\underline{\nicefrac{1}{m}}}})=\inf \mathcal{H}^1(F\cap L_{\boldsymbol{\underline{\nicefrac{1}{m}}}})\geq\varepsilon.
	\end{equation}
	However, $L$ is 1-rectifiable, thereby (\ref{pencilcase}) contradicts the fact that $F$ is purely $1$-unrectifiable.
\end{proof}
We replace the notion of Besicovitch circle pairs with the following stronger analogue:
\begin{definition}[Besicovitch partition]\label{def:besipart}
	A Besicovitch partition of a set $E\subset X$ is a collection $P=(E_1,E_2,\pardist,p_1,p_2)$ containing
	\begin{enumerate}[left=1cm, label=\textbf{(\roman{enumi})}, ref=Definition \ref{def:-besipart} \textbf{(\roman{enumi})} ]
		\item A partition: $E = E_1\cup E_2$;
		\item A distance: $0<\omega=\textnormal{dist}(E_1,E_2)$;
		\item A pair of points: $p_1\in E_1$ and $p_2\in E_2$ such that $d(p_1,p_2)=\pardist$.
	\end{enumerate}
\end{definition}
We require a notion of disjointness of Besicovitch partitions at a scale corresponding to the $\delta$-path radius that we found via Lemma \ref{lemma:uniformquasi}. In this vein, we say that two Besicovitch partitions $P=(E_1,E_2,\pardist,p_1,p_2)$ and $P'=(E'_1,E'_2,\pardist',p'_1,p'_2)$ are \emph{$R$-disjoint} if $B(p_i,R\pardist)\cap B(p'_j,R\pardist')=\emptyset$ for $i,j\in\{1,2\}$.
 
The following result is in the spirit of \cite[Lemma 9]{paper:besi2}.
However, the construction of the continua in the proof differs quite significantly from the original, which uses the fact that, in the plane, the boundary of a ball is a continuum.
\begin{prop}\label{lemma:besicovtichpartitions}
	Let $(X,d)$ be a complete metric space and $ F\subset X $ compact, purely $1$-unrectifiable such that for some $r_0>0$ and $C\geq 1$, we have
	\begin{equation}\label{penguin}
		\frac{1}{C}r\leq \mathcal{H}^1(F\cap B(x,r))\leq Cr\qquad \forall r<r_0,\,x\in F.
	\end{equation} 
	For any $\varepsilon,\delta>0$ and $R\geq 1$, there exists a finite collection $ \mathcal{P} $ of Besicovitch partitions $P_i=(F_{i,1}F_{i,2},\pardist_i,p_{i,1},p_{i,2})$, $1\leq i\leq n$, such that 
	\begin{enumerate}[left=1cm, label=\textbf{P\arabic*.},ref=\textbf{P\arabic*.}]
		\item $P_i$, $P_j$ are $R$-disjoint for each $1\leq i\neq j\leq n$;\label{besi:distinct}
		\item for each $1\leq i \leq n$, $\pardist_i<\delta$; \label{besi:bounded}
		\item the distances are cumulatively large: \[ \sum_{1\leq i\leq n}\pardist_i>\frac{1-\varepsilon}{3(2R+1)}\mathcal{H}^1(F). \] \label{besi:lots}
	\end{enumerate}
\end{prop}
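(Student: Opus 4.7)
The plan is to combine Lemma \ref{lemma:uniformcovering} with Lemma \ref{lemma:smallballs}: cover $F$ by balls of a small common radius $r$, produce a Besicovitch partition of $F$ associated with each ball whose gap is comparable to $r$, and then pass to an $R$-disjoint subfamily via a Vitali-type selection that loses a factor of $(2R+1)$.

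First I would fix parameters. Applying Lemma \ref{lemma:smallballs} with $l$ of order $\mathcal{H}^1(F)$ and some $\varepsilon' > 0$ gives a threshold $r^* > 0$; then pick $r < \min(r^*, \delta/2, r_0)$ satisfying $r/C > \varepsilon'$, and invoke Lemma \ref{lemma:uniformcovering} to cover $F$ by balls $\mathcal{B}$ of radius $r$ with $\sum_{B\in\mathcal{B}}\textnormal{diam}(B) \leq 8C\mathcal{H}^1(F)$ and intersection multiplicity at most $24C^2$. For each $B_\alpha = B(x_\alpha, r) \in \mathcal{B}$, Ahlfors regularity \eqref{penguin} gives $\mathcal{H}^1(F\cap B_\alpha) \geq r/C$. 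If $F \cap B_\alpha$ were $\eta$-chain connected for some $\eta \leq r^*$, then a maximal $\eta$-separated subset has $O(r/\eta)$ points by Ahlfors regularity, and connecting consecutive chain elements by straight segments in the Kuratowski embedding $X \hookrightarrow \ell_\infty$ produces a continuum $L_\alpha$ of length $O(r)$ with $F \cap B_\alpha \subset L_{\alpha,\boldsymbol{\underline{\eta}}}$; Lemma \ref{lemma:smallballs} then bounds $\mathcal{H}^1(F \cap L_{\alpha,\boldsymbol{\underline{\eta}}}) < \varepsilon' < r/C$, a contradiction. Hence $F \cap B_\alpha$ splits into closed pieces $A_\alpha \cup A_\alpha'$ at distance at least $\eta$ for some $\eta$ comparable to $r$.

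Next I would extend each local partition to a Besicovitch partition of $F$: let $F_{\alpha,1}$ be the union of all $\eta$-equivalence classes of $F$ (under $\eta$-chain connectedness) meeting $A_\alpha$, and $F_{\alpha,2}$ its complement in $F$. Both sets are clopen in $F$ with $\textnormal{dist}(F_{\alpha,1}, F_{\alpha,2}) \geq \eta$, so compactness produces witnesses $p_{\alpha,1} \in F_{\alpha,1}, p_{\alpha,2} \in F_{\alpha,2}$ attaining $\pi_\alpha := \textnormal{dist}(F_{\alpha,1}, F_{\alpha,2})$; choosing the local $A_\alpha$ so that the closest cross-partition pair lies in $B_\alpha$ ensures $p_{\alpha,i} \in B_\alpha$. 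Since $\pi_\alpha \leq 2r$ and $p_{\alpha,i} \in B_\alpha$, $R$-disjointness reduces to pairwise disjointness of the enlargements $B(x_\alpha, (2R+1)r)$, which a greedy Vitali procedure achieves at the cost of a factor $O(R)$ in the number of selected balls (by Ahlfors regularity, at most $O(R)$ cover balls lie in each such enlargement). Combining $|\mathcal{B}| \geq \mathcal{H}^1(F)/(Cr)$, $\pi_\alpha \geq \eta$ comparable to $r$, and the sparsification loss yields $\sum_{\alpha \in \mathcal{B}'} \pi_\alpha \geq (1-\varepsilon)\mathcal{H}^1(F)/(3(2R+1))$.

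The main obstacle is the scale coordination producing the quantitative bound $\eta \sim r$: Lemma \ref{lemma:smallballs} is non-constructive in $r^*$ given $\varepsilon'$ and $l$, so the choice of $r$ and $\varepsilon'$ must be made so that the $O(r)$-length continuum arising from a putative $\eta$-chain yields a genuine contradiction against the Ahlfors lower bound. A secondary difficulty is extending the local partition while retaining witnesses inside $B_\alpha$: naively unioning the $\eta$-equivalence classes meeting $A_\alpha$ may place the closest cross-partition pair far from the original ball, so one has to select or refine $A_\alpha$ (among several candidate local splits of $F \cap B_\alpha$) to guarantee the witness property and hence the $R$-disjointness at the end.
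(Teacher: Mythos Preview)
Your approach differs fundamentally from the paper's, and the obstacle you flag at the end --- scale coordination between $\eta$ and $r$ --- is in fact fatal to the strategy as stated. You want to choose $r$ with $C\varepsilon' < r < r^*(\varepsilon',l)$, but Lemma~\ref{lemma:smallballs} gives no quantitative control on $r^*$; there is no reason the interval $(C\varepsilon', r^*)$ should be nonempty. Equivalently, you are asserting that at some small scale $r$, \emph{every} ball $B(x_\alpha,r)$ has $F\cap B_\alpha$ disconnected at scale comparable to $r$. Lemma~\ref{lemma:smallballs} is far too soft to yield this: it only says that thin tubes around short continua carry little $\mathcal H^1$-mass of $F$, not that gaps of definite size appear uniformly at every location and scale. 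Your second difficulty is also genuine: even if $F\cap B_\alpha$ splits locally, both pieces $A_\alpha,A_\alpha'$ may lie in the \emph{same} global $\eta$-equivalence class (via a chain leaving $B_\alpha$), so the extension $F_{\alpha,1},F_{\alpha,2}$ need not separate them --- indeed $F_{\alpha,2}$ may be empty --- and even when it is not, the minimising pair $(p_{\alpha,1},p_{\alpha,2})$ has no reason to sit near $B_\alpha$.

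The paper's proof avoids both problems by working globally and by contradiction rather than ball-by-ball. One covers $F$ by balls of a fixed radius $\hat r$, groups them into \emph{clusters} (connected components of the $\hat r/4$-thickened union), and then iteratively produces Besicovitch partitions by taking $F_1$ to be the part of $F$ in the ``base'' cluster and $F_2$ its complement; the gap $\pi$ is simply $\textnormal{dist}(F_1,F_2)$, which may be tiny compared to $\hat r$. One then bridges this gap with a new ball and repeats until everything is connected. The lower bound \ref{besi:lots} is obtained not by summing $\eta$'s but by contradiction: if $\sum \pi_i$ were small, the original cover together with the bridging balls and a fixed polygonal spine assemble into a \emph{single} continuum $\tilde L$ of length bounded by a constant times $\mathcal H^1(F)$, whose $\hat r$-neighbourhood contains all of $F$; then Lemma~\ref{lemma:smallballs} (applied once, with $l$ fixed in advance) together with Lemma~\ref{lemma:hausdorff} forces $\mathcal H^1(F)$ to be small, a contradiction. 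The key point is that Lemma~\ref{lemma:smallballs} is applied to one continuum of a priori bounded length, so no comparison between $r^*$ and $\varepsilon'$ is ever needed.
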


\begin{proof}
	We identify $F$ with its isometric image in $\ell_\infty$.
	Let $\delta_0>0$ be given by Lemma \ref{lemma:hausdorff}, such that, for all $ r\in (0,\,\delta_0) $,
	\begin{equation}
		\label{delta0}
		\mathcal{H}^1(F\cap \bigcup \mathcal{A}_r)<\sum_{A_i\in \mathcal{A}_r}\textnormal{diam}(A_i)+\frac{\varepsilon}{2}\mathcal{H}^1(F),
	\end{equation}
	where $ \mathcal{A}_r $ is any collection of convex sets with diameter at most $ r $.
	 
	Let
	\[\delta_1<\frac{1}{3(2R+1)}\min\{\delta,\delta_0, r_0\}.\]
	Since $F$ is compact, there exists a finite set $ \{x_n\}_{n=1}^N\subset F $, such that $ F\subset\bigcup_{n=1}^NB(x_n,\delta_1)$. Let $L$ be the polygonal line joining these $x_n$, then $\mathcal{H}^1(L)=\sum d(x_n,x_{n+1})$.
	Let $r<\delta_1$ be given by Lemma \ref{lemma:smallballs} such that, for any continuum $\tilde{L}$ with \[\mathcal{H}^1(\tilde{L})<\mathcal{H}^1(L)+200C^3(R+2)\mathcal{H}^1(F),\]
	we have
	\begin{equation}\label{chalky} 
		\mathcal{H}^1(F\cap \tilde{L}_{\textbf{\underline{r}}})<\frac{\varepsilon}{2}\mathcal{H}^1(F).
	\end{equation} 
	Finally, we satisfy the assumptions of Lemma \ref{lemma:uniformcovering} so way may find a cover of $F$ by a finite collection of balls $\mathcal{A}_1$ that satisfies
	\begin{enumerate}
		\item [\ref{uniformrad}] the radius $\hat r<r/2$ of every ball is the same;
		\item [\ref{closecover}] $\sum_{B\in\mathcal{A}_1}\textnormal{diam}(B)\leq 8C\mathcal{H}^1(F)$;
		\item [\ref{martin}] for any $B\in \mathcal{A}_1$, $\# B_\cap\leq 12 C^2$.
	\end{enumerate}
	
	We now iteratively construct collections $\mathcal A_k$ of balls by adding elements to $\mathcal A_1$ until the union of some $\mathcal A_k$ is connected.
	Suppose that $\mathcal A$ is a collection of balls.
	For each $B\in\mathcal A$ define
	\[\operatorname{Cluster}\,(B)=\left\{\hat B\in\mathcal A :B,\hat B \text{ in same connected component of }\left(\bigcup \mathcal A\right)_{\boldsymbol{\underline{\hat r/4}}} \right\},\]
	and
	\[\text{Cluster}\,{\mathcal{A}}:=\left\{\operatorname{Cluster}\,(B):B\in\mathcal A\right\}.\]
	Also define
	\[\text{Base}\,{\mathcal{A}}:=\bigcup\{C\in \text{Cluster}\,\mathcal{A} :x_n\in C\text{ for some }n\}.\]

	We now add balls to $\mathcal A_k$ as follows.
	Set $\mathcal{P}_0=\mathcal{B}_0=\emptyset$ and, for $k\geq 1$, suppose that $\mathcal A_k$ is defined.
	\begin{enumerate}
		\item Let $F_1=F\cap \text{Base}\,{\mathcal{A}_k} $ and $F_2=F\backslash F_1$.
		\item Find points $p_1\in F_1$ and $p_2\in F_2$ such that $\pardist:=d(p_1,p_2)=\textnormal{dist}(F_1,F_2)$, so that $(F_1,F_2,\pardist,p_1,p_2)$ is a Besicovitch partition that satisfies \ref{besi:bounded}.
		\item Let $b$ be the midpoint of $p_1$ and $p_2$ and let $B=B(b,\frac{2R+1}{2}\pardist)$.
		\item Set $\mathcal{P}_k=\mathcal{P}_{k-1}\cup\{(F_1,F_2,\pardist,p_1,p_2)\}$, $\mathcal{B}_k=\mathcal{B}_{k-1}\cup\{B\}$ and $\mathcal{A}_{k+1}=\mathcal{A}_k\cup \{B\}$.
		\item If $\text{Base}\,{\mathcal{A}_{k+1}}=\bigcup\mathcal{A}_{k+1}$ , stop. Otherwise, repeat with $k=k+1$.
	\end{enumerate}
	
	\begin{figure}
		\centering
		\def\svgwidth{1\textwidth}
		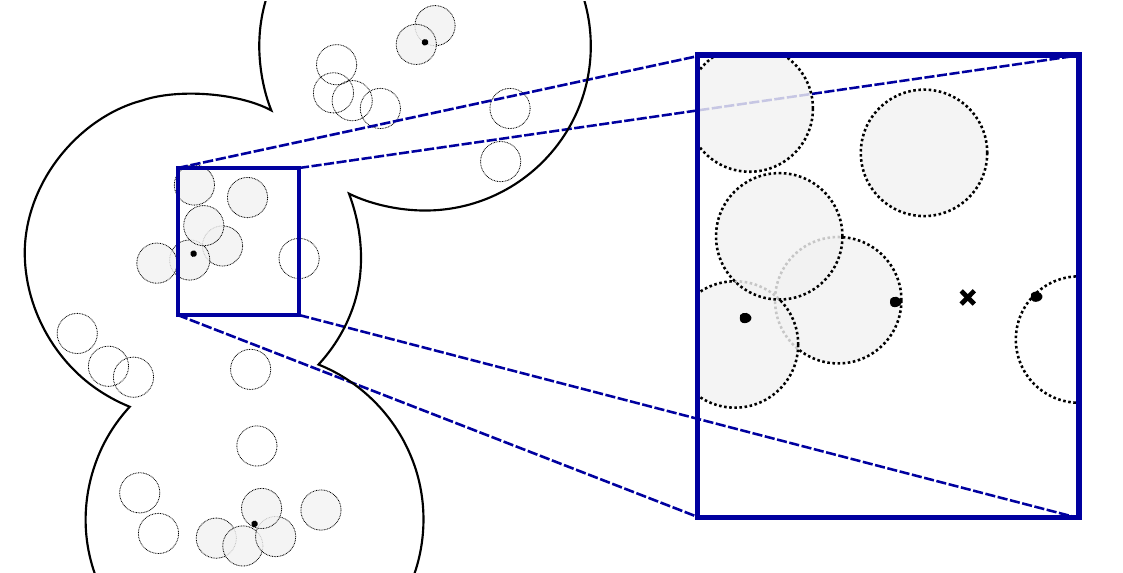
		\caption{Step 2 of the proof of Proposition \ref{lemma:besicovtichpartitions}: Finding the ball in $\mathcal{B}_k$ that corresponds to $\text{Base}\,{\mathcal{A}_k}$. }
	\end{figure}
	
	Since the original cover $\mathcal{A}_1$ is finite, this process will terminate
			after $K \leq \#\text{Cluster}\,{\mathcal{A}} $ steps. Finally, define $\mathcal A$ by removing from $\mathcal{A}_K$ those $A_i$ with $A_i\subset B$ for some $B\in \mathcal{B}_K$. 
	Then $\mathcal{A}\cup\mathcal{B}_K$ forms a covering of $F$ such that each connected component of $\bigcup(\mathcal A \cup \mathcal B_K)$ contains an $x_i$.
	 
	The collections $\mathcal{B}_K$ and $\mathcal{P}_K$ will now be refined slightly to obtain \ref{besi:distinct}.
	Reindex $\mathcal B_K$ (and hence $\mathcal P_K$) such that $\textnormal{diam} B_i \geq \textnormal{diam} B_j$ for all $1\leq i\leq j\leq K$ and define
	\[\mathcal B := \{B_i\in \mathcal B_K: B_i\cap B_j = \emptyset \ \forall 1\leq j<i\}.\]
	Also define
	\[\mathcal{P}:=\{(F_1,F_2,\pardist,p_1,p_2)_k\in\mathcal{P}_K:B_k\in \mathcal B\}. \]
	Then the partitions in $\mathcal{P}$ now satisfy \ref{besi:distinct}, as required. We also have that $\mathcal{B}_K$ is covered by $3\mathcal{B}$ and thus $\mathcal{A}\cup3\mathcal{B}$ forms a cover of $F$.
	
%	Now let $C$ be a connected component of $\mathcal A \cup \mathcal B$ and $x_i \in C$.
%	Observe that
%	\[\mathcal S:=\bigcup\{S\in \mathcal A \cup \mathcal B: S\cap C\neq \emptyset\}\]
%	may be connected by a curve of length
%	\[2\sum \{\textnormal{diam} S: S\in\mathcal S\}\]
%	that begins and end at $x_i$.\unsure{construct tree more explicitly and be very clear how previous lemma is being invoked}
%	Therefore, $\mathcal A \cup 3\mathcal B$ may be connected by a curve of length
%	\[L+\sum_{A\in\mathcal A}2 \textnormal{diam} (A) + \sum_{B\in\mathcal B} 6 \textnormal{diam} (B).\]

	To complete the proof, \ref{besi:lots} is implied by
	\begin{equation}\label{eq:lowerbound}
		\sum_{B_i\in\mathcal{B}}3\textnormal{diam}(B_i)>(1-\varepsilon)\mathcal{H}^1(F).
	\end{equation}  

	Suppose that this inequality is not true. To derive a contradiction, we first must build a particular continuum which we do in three steps. 
	\begin{enumerate}
		\item
		For any $B\in \mathcal{A}$, connect the centre of $B$ to the centres of each ball in $B_\cap$.
		By \ref{martin} there are at most $12C^2$ such balls.
		Repeat this for every ball in $\mathcal{A}$ to obtain a union of continua, $L_1$.
		By \ref{closecover}, $L_1$ satisfies
		\[\mathcal{H}^1(L_1)\leq 12 C^2\sum_{B\in\mathcal A}\textnormal{diam}(B)\leq 96C^3\mathcal{H}^1(F).\]
		Note that, for each connected component of $\cup\mathcal A$, $L_1$ connects the centres of the balls forming that component.
		\item 	For each $B\in\mathcal{A}$, consider the collection 
		\[\text{Nbours}(B):=\left\{\hat{B}\in\mathcal{A}:0<\textnormal{dist}(B,\hat{B})\leq \hat{r}/4\right\}.\]
		These balls are disjoint and are covered by $4B$. Thus, using (\ref{penguin}), we have that
		\[\frac{1}{C}\hat r\cdot\# \text{Nbours}(B)\leq \sum_{\hat B\in \text{Nbours}(B)} \mathcal{H}^1(F\cap \hat{B})\leq \mathcal{H}^1(F\cap 4B)\leq 4C\hat{r} \]
		and hence
		\[\# \text{Nbours}(B)\leq 4C^2.\]
		Thus we may connect the centre of $B$ to the centre of each ball in $\text{Nbours}(B)$ using a continuum of length at most $4C^2\cdot 3 \hat{r}$. Repeat for each $B\in \mathcal{A}$ to create a union $L_2$ of disjoint continua satisfying
		\[\mathcal{H}^1(L_2)\leq 6 C^2\sum_{B\in\mathcal A}\textnormal{diam}(B)\leq 72C^3\mathcal{H}^1(F).\]
		Note that, for each connected component of $(\cup\mathcal A)_{\boldsymbol{\underline{\hat r/4}}}$, $L_1\cup L_2$ connects the centres of the balls forming that component.
		\item For $B\in 3\mathcal{B}$, consider the elements $C_1,\ldots,C_N$ of $\text{Cluster}\,\mathcal{A}$ that intersect $B$. For each $C_n$, pick a ball $B(a_n,\hat r)\in \mathcal{A}$ that is a subset of $A_n$ and that intersects $B$. Note that by construction of $B$ we necessarily have
		\[d(a_n,a_k)\geq \pardist,\qquad \forall n\neq k\]
		and
		\[B(a_n,\pardist/2)\subset B_{\boldsymbol{\underline{\hat r+\pardist/2}}}.\]
		Hence
		\begin{align*}
			\frac{N}{C}\pardist/2\leq \sum_{n=1}^{N}\mathcal{H}^1(F\cap B(a_n,\pardist/2))&\leq \mathcal{H}^1(F\cap B_{\boldsymbol{\underline{\hat r+\pardist/2}}})\\
			&\leq C(3(2R+1)\pardist/2+\hat{r}+\pardist/2)
			.
		\end{align*}
		Since, by contruction, $\pardist\geq \hat{r}/4$, this implies
		\[N\leq C^2(3(2R+1)+2\hat{r}/\pardist+1)\leq C^2(3(2R+1)+9).\]
		Connect the centre of $B$ to each $a_n$ to create a continuum of length at most
		\[C^2(3(2R+1)+9)\cdot (\pardist/2+\hat r)\leq 6C^2(3(2R+1)+9)\pardist.\]
		Repeat this for every $B\in 3\mathcal{B}$ to form a union of continua $L_3$ which satisfies
		\begin{align*}
			\mathcal{H}^1(L_3)&\leq C^2(3(2R+1)+9)\sum_{B\in 3\mathcal B} 3 \textnormal{diam}(B)\\
			&\leq 6C^2(R+2)(1-\varepsilon)\mathcal{H}^1(F).
		\end{align*}
	\end{enumerate} 
	
 All together we have a continuum $\tilde L=L\cup L_1\cup L_2\cup L_3$ such that
	\begin{align*}
		\mathcal{H}^1(\tilde L)&\leq \mathcal{H}^1(L)+96C^3\mathcal{H}^1(F)+72C^3\mathcal{H}^1(F)+6C^2(R+2)(1-\varepsilon)\mathcal{H}^1(F)\\
		&< \mathcal{H}^1(L)+200C^3(R+2)\mathcal{H}^1(F).
	\end{align*}
	Thus, \eqref{chalky} gives
	\begin{equation*}
		\mathcal H^1(F\cap \tilde{L}_{\textbf{\underline{r}}} ) < \frac{\varepsilon}{2} \mathcal H^1(F).
	\end{equation*}
	Since each element of $\mathcal{A}$ has diameter $\hat r<r$,
	\[\mathcal{H}^1(F\cap \bigcup \mathcal{A})\leq\mathcal H^1(F\cap \tilde{L}_{\textbf{\underline{r}}} ) < \frac{\varepsilon}{2} \mathcal H^1(F). \]
	Therefore, since $\mathcal A \cup 3\mathcal B$ covers $F$,
	\begin{equation*}
		\mathcal H^1(F\setminus \bigcup 3\mathcal B) \leq \mathcal H^1(F\cap \bigcup \mathcal A) < \frac{\varepsilon}{2} \mathcal H^1(F)
	\end{equation*}
	and so
	\begin{equation}\label{eq:bladder}
		\mathcal{H}^1(F\cap \bigcup 3\mathcal{B})=\mathcal{H}^1(F)-\mathcal{H}^1(F\backslash \bigcup 3\mathcal{B})>(1-\varepsilon/2)\mathcal{H}^1(F).
	\end{equation}
	However, the diameters of the balls in $3\mathcal{B}$ are less than $\delta_0$. Therefore, combining \eqref{delta0} and the opposite of \eqref{eq:lowerbound} gives
	\begin{equation}
		\mathcal{H}^1(F\cap \bigcup 3\mathcal{B})<\sum_{B_i\in\mathcal{B}}3\textnormal{diam}(B_i)+\frac{\varepsilon}{2}\mathcal{H}^1(F)<(1-\varepsilon/2)\mathcal{H}^1(F),
	\end{equation}
	which contradicts (\ref{eq:bladder}).
	\end{proof}

	\section{Connected Tangents Implies Rectifiable}\label{results}

We now prove the main theorem.
The proof is rather technical however the underlying geometric idea is simple: a set cannot both be approximated by quasi-path connected spaces (which is implied by having connected tangents) and have collections of Besicovitch partitions as in Proposition \ref{lemma:besicovtichpartitions} (which is implied by being purely unrectifiable).
\begin{theorem}\label{ultimate}
	Let $(X,d)$ be a complete metric space and $E\subset X$ be $\mathcal{H}^1$-measurable with $\mathcal{H}^1(E)<\infty$. If for $\mathcal{H}^1$-a.e. $x\in E$, $\Theta_*(E,x)>0$ and
	\[\textnormal{Tan}(X,d,\mathcal{H}^1\llcorner E,x)\subset \mathcal{C}^*,\]
	then $E$ is $1$-rectifiable.
\end{theorem}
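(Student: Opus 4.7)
The plan is to argue by contradiction, combining Corollary \ref{decomposemebaby} and Proposition \ref{lemma:besicovtichpartitions}. Suppose $E$ is not $1$-rectifiable. By Lemma \ref{lemma:decomprec/unrec}, decompose $E = R_0 \cup P$ with $R_0$ $1$-rectifiable and $P$ purely $1$-unrectifiable; by assumption $\mathcal{H}^1(P) > 0$. Since $\mathcal{H}^1\llcorner E$ is asymptotically doubling (Remark \ref{remark:densedouble}), Lebesgue density points of $P$ are $\mathcal{H}^1$-generic, so Corollaries \ref{cor:densityofrestrictedsets} and \ref{cor:restrict} transfer the hypotheses to $P$: at $\mathcal{H}^1$-a.e.\ $x \in P$, $\Theta_*(P,x) > 0$ and $\textnormal{Tan}(\mathcal{H}^1\llcorner P, x) \subset \mathcal{C}^*$.

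Fix a small $\delta \in (0, 1/10)$ and apply Corollary \ref{decomposemebaby} to $P$, obtaining a compact $F \subset P$ with $\mathcal{H}^1(F) > 0$, together with constants $\hat d > 0$ and $R \geq 1$, such that at $\mathcal{H}^1$-a.e.\ $x \in F$ we have $\hat d \leq \Theta_*(F,x)$, $\Theta^*(F,x) \leq 1$, and $\textnormal{Tan}(\mathcal{H}^1\llcorner F, x) \subset \mathcal{Q}_*(\delta, R, 1/\hat d) \cap \mathcal{C}^*$. Using Proposition \ref{prop:tancompact} and an Egorov-type argument (together with Corollary \ref{cor:densityofrestrictedsets}), I would pass to a compact $F' \subset F$ with $\mathcal{H}^1(F') > 0$ on which there exist uniform parameters $r_0, \varepsilon_0 > 0$ and $C \geq 1$ such that for every $x \in F'$ and every $r < r_0$ the Ahlfors bound $\tfrac{1}{C} r \leq \mathcal{H}^1(F' \cap B(x,r)) \leq C r$ holds and $T_r(\mathcal{H}^1\llcorner F', x)$ lies within $d_*$-distance $\varepsilon_0$ of $\mathcal{Q}_*(\delta, R, C) \cap \mathcal{C}^*$.

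Next I would apply Proposition \ref{lemma:besicovtichpartitions} to $F'$ (which is compact, purely $1$-unrectifiable, and Ahlfors-regular up to scale $r_0$) with the scale parameter chosen smaller than $r_0$ and the disjointness radius taken to be the same $R$ as above. This produces a finite collection of $R$-disjoint Besicovitch partitions $(F'_{i,1}, F'_{i,2}, \pi_i, p_{i,1}, p_{i,2})$ with $\pi_i < r_0$ and $\sum_i \pi_i$ comparable to $\mathcal{H}^1(F') > 0$; in particular at least one such partition is non-trivial, and by construction all endpoints $p_{i,j}$ lie in $F'$. For such a partition, at scale $\pi_i$ around $p_{i,1}$, the rescaled measure $T_{\pi_i}(\mathcal{H}^1\llcorner F', p_{i,1})$ is $\varepsilon_0$-close in $d_*$ to some $(\nu, y) \in \mathcal{Q}_*(\delta, R, C) \cap \mathcal{C}^*$, and in this rescaling $p_{i,2}$ sits at distance exactly $1$ from $p_{i,1}$.

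The contradiction now comes from matching the Besicovitch gap with the quasi-path approximation. Theorem \ref{theorem:largesubsets} turns the $d_*$-closeness into large compact subsets of $T_{\pi_i}$ and of $\nu$ that are close in $d_{pmGH}$; in particular $\supp \nu$ contains a point $y'$ at distance approximately $1$ from $y$ corresponding to $p_{i,2}$. The quasi-path-connectedness of $(\nu, y)$ supplies a $\delta$-quasi-path from $y$ to $y'$ inside $B(y, R)$, and Corollary \ref{pathypath} transfers this path back through the $d_{pmGH}$ approximation to a $3\delta$-quasi-path in $F'$ connecting $p_{i,1}$ and $p_{i,2}$, that is, a chain of points in $F'$ with consecutive distances at most $3\delta \, \pi_i$. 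But any chain in $F' = F'_{i,1} \cup F'_{i,2}$ from $p_{i,1} \in F'_{i,1}$ to $p_{i,2} \in F'_{i,2}$ must contain a step of length at least $\textnormal{dist}(F'_{i,1}, F'_{i,2}) = \pi_i$, contradicting $3\delta < 1$. The main obstacle I expect is the careful Egorov-type construction of $F'$: one needs simultaneous uniformity of the Ahlfors bounds, of the tangent approximation scale $r_0$, and of the residual measure, all while keeping the constants compatible with the quasi-path radius $R$ and density $\hat d$ furnished by Corollary \ref{decomposemebaby}, so that Proposition \ref{lemma:besicovtichpartitions} still delivers partitions with endpoints in $F'$.
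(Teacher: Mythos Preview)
Your overall strategy matches the paper's: reduce to a compact purely unrectifiable $F$ with quasi-path-connected tangents via Corollary~\ref{decomposemebaby}, produce Besicovitch partitions via Proposition~\ref{lemma:besicovtichpartitions}, and derive a contradiction by confronting the partition gap with a quasi-path coming from the tangent through Theorem~\ref{theorem:largesubsets}. The issue is in how you close the contradiction.

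You want the quasi-path to transfer back to a chain \emph{in $F'$}, so that the gap $\pi_i=\textnormal{dist}(F'_{i,1},F'_{i,2})$ blocks it directly. For that you need $d_*(T_r(\mathcal{H}^1\llcorner F',x),\mathcal{Q}_*)<\varepsilon_0$ uniformly on $F'$, since Theorem~\ref{theorem:largesubsets} then places the transferred chain inside $K_{F'}\subset \supp(\mathcal{H}^1\llcorner F')$. But this is circular: Egorov uniformises quantities that are defined \emph{before} passing to the subset, so what you can actually arrange is uniform closeness of $T_r(\mathcal{H}^1\llcorner F,x)$ to $\mathcal{Q}_*$, not of $T_r(\mathcal{H}^1\llcorner F',x)$. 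Corollary~\ref{cor:restrict} only identifies the two measures' tangents in the limit $r\to 0$ at Lebesgue density points; the scale below which the finite-$r$ rescalings become $\varepsilon_0$-close is not uniform in $x$, and a further Egorov to control it shrinks $F'$ again and moves the target measure.

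The paper sidesteps this by keeping the measure $\mathcal{H}^1\llcorner F$ throughout and accepting that the transferred chain lands only in $K_F\subset F$, not in the partitioned subset $G$. Then there is no direct contradiction, since points of $F\setminus G$ may well bridge the Besicovitch gap in $G$. Instead the contradiction is quantitative: along the quasi-path one locates a point $\bar q$ at distance at least $1/6$ from both $\bar K_{G_1}$ and $\bar K_{G_2}$; Ahlfors regularity of the tangent gives $\nu(B(q,1/12))\geq \hat d/12$, which transfers to a lower bound $\mathcal{H}^1\bigl((F\setminus G)\cap B(p_{i,1},R\pi_i)\bigr)\geq c\,\hat d^{2}\pi_i$. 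Summing over all the $(R+1)$-disjoint partitions---this is where \ref{besi:lots} is genuinely used, not merely to guarantee one non-trivial partition---yields a lower bound on $\mathcal{H}^1(F\setminus G)$ that contradicts the smallness of $\mathcal{H}^1(F\setminus G)$ fixed at the Egorov step.
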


\begin{proof}
	We proceed by contradiction and assume $E$ is not $1$-rectifiable. First, by  Lemma \ref{lemma:decomprec/unrec}, we may find a purely $1$-unrectifiable set $E'\subset E$ of positive measure. By  Corollary \ref{cor:densityofrestrictedsets}, we have $\Theta_*(E',x)>0$ for $\mathcal{H}^1$-a.e. $x\in E'$, thus $\mathcal{H}^1\llcorner E'$ is asymptotically doubling and by Lemma \ref{cor:restrict},
	\begin{equation*}
		\textnormal{Tan}(\mathcal{H}^1\llcorner E',x)=\textnormal{Tan}(\mathcal{H}^1\llcorner E,x)\subset \mathcal{C}^*\quad \text{for }\mathcal{H}^1\text{-a.e.}\,x\in E'.
	\end{equation*} 
	Therefore we satisfy the requirements of Lemma \ref{decomposemebaby}.
	Thus, for $\delta=1/6$, we may find a set $F\subset E'$ and numbers $\hat{d},R>0$ such that for $\mathcal{H}^1$-a.e. $x\in F$,
	\begin{align*}
		\ref{me} & \quad \hat{d}\leq\Theta_*(F,x),\, \Theta^*(F,x)\leq 1;\\
		\\
		\ref{you} & \quad \textnormal{Tan}(\mathcal{H}^1\llcorner F,x)\subset \mathcal{Q}_*(1/6,R,1/\hat{d})\cap \mathcal{C}^*.
	\end{align*}
	By the virtue of the fact that $Q_*$ are nested, $\mathcal{Q}_*(\delta,R,C)\subset\mathcal{Q}_*(\delta,R+1,C) $, we can assume that $R$ is such that $8/(R+1)<\hat{d}/12$.
	We may then find a further subset $G\subset F$ with
	\begin{equation}\label{getbigg}
		\mathcal{H}^1(F\backslash G)< \frac{\hat{d}^2(1-\varepsilon)}{72(2R+3)+1}\mathcal{H}^1(F)
	\end{equation}  
	and an $r_0>0$ such that for all $x\in G$ and $r<r_0$
	\begin{equation}\label{eq:upperdensity}
		\hat{d}r\leq\mathcal{H}^1(F\cap B(x,r))\leq 4r,
	\end{equation}
	and such that, by Proposition \ref{prop:tancompact},
	\begin{equation}\label{eq:crumpet}
		d_*(T_{r}(\mathcal{H}^1\llcorner F,d,x),\textnormal{Tan}(\mathcal{H}^1\llcorner F,x))<\frac{1}{R+1}.
	\end{equation}
	
	Fix $\varepsilon>0$ and $\delta\leq\min\{r_0,\hat d\}/2$. Use  Lemma \ref{lemma:besicovtichpartitions} to find a collection $\mathcal{P}$ of Besicovitch partitions for $G$ satisfying
	\begin{equation*}
		\ref{besi:distinct}\quad P_i, P_j \text{ are }(R+1)-\text{disjoint},\hspace{1.5cm} \ref{besi:bounded}\quad\pardist_i<\delta,
	\end{equation*} 
	\begin{equation*}
		\ref{besi:lots}\quad\sum_i \pardist_i>\frac{1-\varepsilon}{3(2R+3)}\mathcal{H}^1(G). 
	\end{equation*}

	For a moment fix a $P=(G_{1},G_{2},\hat \pardist,p_{1},p_{2})$ in $\mathcal{{P}}$ and set $\pardist=2\hat \pardist$. Because $\pardist<r_1$, we can find $(\nu,\rho,y)\in \textnormal{Tan}(\mathcal{H}^1\llcorner F,p_{1})$ such that (\ref{eq:crumpet}) holds for $r=\pardist$. Then by Theorem \ref{theorem:largesubsets}, there exists compact 
	$p_1\in K_F\subset F \cap B(p_1, \pardist(R+1))$ and $y\in K_\nu \subset \supp\nu\cap B(y,R+1)$
	such that 
	\begin{align*}
		\ref{largesubsets1}\quad &\max\left\{\frac{\mathcal{H}^1(F\cap B(p_1,\pardist (R+1))\backslash K_F)}{\mathcal{H}^1(F\cap B(p_1,\pardist))},\nu(B(y,R+1)\backslash K_\nu)\right\}<\frac{1}{R+1}, \\ 
		\\
		\ref{largesubsets2}\quad &d_{pmGH}\left((K_F,\frac{\mathcal{H}^1\llcorner K_F}{\mathcal{H}^1(F\cap B(p_1,\pardist))},p_1),(K_\nu,\nu\llcorner K_\nu,y)\right)<\frac{3}{R+1}.
	\end{align*} 
	
	By \ref{largesubsets2} and Proposition \ref{prop:embeddingsandconvergence}, there exists a metric space $(Z,\lambda)$ and isometric embeddings
	\[(K_F,p_1),(K_\nu,y)\rightarrow(\bar K_F,z),(\bar K_\nu,z)\subset (Z,\lambda,z)\] 
	such that
	\begin{equation}\label{pillow}
		F_z\left(\frac{\mathcal{H}^1\llcorner \bar K_F}{\mathcal{H}^1(F\cap B(p_1,\pardist))},\bar \nu\llcorner \bar K_\nu\right)+d_{H(z)}(\bar K_F, \bar K_\nu)<\frac{3}{R+1},
	\end{equation}
	where for clarity and ease of notation, the image of any set or point under these embeddings is denoted with a bar. 
	
	We now demonstrate the existence of points in both $\bar K_F$ and $\bar K_\mu$ near $\bar{p}_2$.
	
	Firstly, note that by our choice of $R$ we now satisfy the assumptions of Proposition \ref{close?} where $\varepsilon=3/(R+1)$ and  $\delta=1/6$, thus there exists $s\in K_F\cap B_{d/\pardist}(p_2,1/6)$. This point combined with (\ref{pillow}), gives us the existence of $\bar t\in \bar K_\nu$ such that $\lambda(\bar s,\bar t)\leq 3/(R+1)$. Thus, using the triangle inequality we find that
	\begin{align*}
		\lambda(z,\bar t)&\leq \lambda(z,\bar p_2)+\lambda(\bar p_2,\bar s)+\lambda(\bar s,\bar t)\\
		&= d_{d/\pardist}(p_1,p_2)+d_{d/\pardist}(p_2,s)+\lambda(s,t)\\
		&\leq 1/2+1/6+3/(R+1)\leq 1.
	\end{align*} 
	Thus we have that $t\in K_\nu$ is such that $\rho(y,t)\leq 1$. Once again, due to our choice of $R$, we satisfy the assumptions of Corollary \ref{pathypath}, with $\varepsilon=3/(R+1)$ and  $\delta=1/6$, thus we have that $K_\nu\cap B(y,R+1/12)$ contains a $1/3$-quasi-path connecting $y$ and $t$.
	Let $K_{G_1}=K_F\cap G_1$ and $K_{G_2}=K_F\cap G_2$, then we have $\textnormal{dist}(\bar K_{G_1},\,\bar K_{G_2})\geq 1/2.$ Because $\bar K_\nu$ contains a $1/3$-quasi-path between $z$ and $\bar t$, there must be some $\bar q\in \bar K_\nu$ such that
	\[\textnormal{dist}(\bar K_{G_1},\bar q)\geq \frac{1}{6}\quad\text{and}\quad\textnormal{dist}(\bar K_{G_2},\bar q)\geq \frac{1}{6}\]
	and $B(\bar q,1/12)\subset B(z,R+1/24)\subset B(z,R+1)$.
	Thus there exists the required points near $\bar{p}_2$.
	
%	We have that $B(\bar q,3/(R+1))\subset B(z,R+1/12+3/(R+1))\subset B(z,R+1)$, so once again by (\ref{pillow}), we must have some $\bar w\in B(\bar q,3/(R+1))\cap \bar K_F$. 	
%	
%	 
%	By construction of $w$, we have that
%	\unsure{\ref{bikelight} requires $3/(R+1)+M<1/12.$ Try M=1/24.}
%	\begin{equation}\label{bikelight}
%		B(\bar q,M)\subset B(\bar w,1/12).
%	\end{equation}
%	and
%	\begin{equation}
%		\textnormal{dist}_{d/\pardist}(G_1,w)\geq  \frac{1}{6}-\frac{3}{R+1}>\frac{1}{12} \quad \text{and likewise}\quad \textnormal{dist}_{d/\pardist}(G_2,w)>\frac{1}{12}
%	\end{equation}
%	implying that $B_{d/\pardist}(w,1/12)\cap G=\emptyset$. 
	
	Now, since $\nu\in\mathcal{A}(1/\hat d)$,
	\begin{equation}
		\nu(B(q,1/12)\cap K_\nu)\geq \frac{\hat{d}}{12}-\frac{1}{R+1}
	\end{equation}
	which combined with (\ref{pillow}) implies that
	\begin{equation}
		\frac{\mathcal{H}^1\llcorner \bar K_F(B(\bar q,1/12))}{\mathcal{H}^1(F\cap B(p_1,\pardist))}\geq \nu(B(\bar q,1/12)\cap K_\nu)-\frac{3}{R+1}\geq \frac{\hat{d}}{12}-\frac{4}{R+1}
		.
	\end{equation}
	Therefore,
	\begin{align*}
		\mathcal{H}^1(F\cap B_{d/\pardist}(p_1,R/2)\backslash G_1\cup G_2)&\geq (\frac{\hat{d}}{12}-\frac{4}{R+1})\mathcal{H}^1(F\cap B(p_1,\pardist))\\
		&\geq (\frac{\hat{d}}{12}-\frac{4}{R+1})\hat{d}\pardist>\frac{\hat d^2}{24}\pardist.
	\end{align*}

	Since the Besicovitch partitions are disjoint (condition \ref{besi:distinct}), the balls $B_{d/\pardist}(p_1,R/2)$ corresponding to each partition are disjoint.
	  We may repeat the same process for each partition $P_i\in \mathcal{P}$ and use \ref{besi:lots} to obtain
	\begin{align*}
		\mathcal{H}^1(F\backslash G)&\geq \frac{\hat d^2}{24}\cdot \frac{1-\varepsilon}{3(2R+3)}\mathcal{H}^1(G)\\
		&=\frac{\hat{d^2}(1-\varepsilon)}{72(2R+3)}(\mathcal{H}^1(F)-\mathcal{H}^1(F\backslash G)).
	\end{align*}
	This implies
	\begin{equation*}
		\mathcal{H}^1(F\backslash G)\geq \frac{\hat{d}^2(1-\varepsilon)}{72(2R+3)+\hat{d}^2(1-\varepsilon)}\mathcal{H}^1(F)\geq \frac{\hat{d}^2(1-\varepsilon)}{72(2R+3)+1}\mathcal{H}^1(F)
	\end{equation*}
	which contradicts (\ref{getbigg}) and concludes the proof.	
\end{proof}

Finally, we restate the necessary part of \cite[Theorem 1.2]{bate}, with which we obtain our final statement. For $K\geq 1$, let $\textnormal{biLip}(K)^*$ be the set of all pointed metric measure spaces whose metric is $K$-bi-Lipschitz equivalent to the Euclidean norm.
\begin{theorem}[Theorem 1.2 \cite{bate}]\label{chips?}
	
	Let $(X,d)$ be a complete metric space, $n\in \mathbb{N}$ and let $E\subset X$ be $\mathcal{H}^n$-measurable with $\mathcal{H}^n(E)<\infty$. Then $E$ is $n$-rectifiable if and only if  for $\mathcal{H}^n$-a.e. $x\in E$, $\Theta_*^n(E,x)>0$ and there exists a $K_x\geq 1$ such that
	
	\[\textnormal{Tan}(X,d,\mathcal{H}^n\llcorner E,x)\subset \textnormal{biLip}(K_x)^*.\]
	
\end{theorem}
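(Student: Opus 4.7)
The forward implication ($n$-rectifiable $\Rightarrow$ biLipschitz tangents) follows from Kirchheim's metric differentiability theorem. At $\mathcal{H}^n$-a.e.\ $a \in A_i$, the Lipschitz parametrisation $f_i$ is metrically differentiable with metric derivative a seminorm $\|\cdot\|_a$ on $\mathbb{R}^n$. By a short area-formula calculation, every tangent of $\mathcal{H}^n\llcorner E$ at $f_i(a)$ is then isometric to $(\mathbb{R}^n, \|\cdot\|_a)$ equipped with a multiple of Lebesgue measure, and the seminorm is in fact a norm $\mathcal{H}^n$-almost everywhere (this is the content of Kirchheim's theorem that rules out singular directions for purely rectifiable sets). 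Since all norms on $\mathbb{R}^n$ are biLipschitz equivalent to the Euclidean norm, each such tangent lies in $\textnormal{biLip}(K_a)^*$ for some $K_a \geq 1$, and positivity of the lower density follows from \ref{theorem:densitypart1}.

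The substantive direction (biLipschitz tangents $\Rightarrow$ $n$-rectifiable) is the core content of \cite{bate}. My plan is a three-step reduction. First, I would use the measurable selection machinery of Lemma \ref{lemma:borel}, together with the fact that $\textnormal{biLip}(K)^*$ is closed in $d_*$, to decompose $E$ into countably many Borel pieces on which $K_x$ is uniformly bounded by some $K$, the tangent approximation is uniform in scale (via Proposition \ref{prop:tancompact}), and the density bounds are uniform. Passing to a compact subset of positive measure via inner regularity and Corollary \ref{cor:densityofrestrictedsets}, I reduce to a compact, uniformly Ahlfors $n$-regular set $F \subset E$ on which every tangent lies within a prescribed $d_*$-distance of $\textnormal{biLip}(K)^*$.

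Second, on such an $F$ I would build local Lipschitz parametrisations from $\mathbb{R}^n$ that cover a definite fraction of $F$ near each typical point. The key tool is Theorem \ref{theorem:largesubsets}: closeness of $T_r(\mathcal{H}^n\llcorner F, p)$ to an element of $\textnormal{biLip}(K)^*$ in $d_*$ yields compact subsets of $F$ and of a $K$-biLipschitz image of a Euclidean ball that are close in pointed Gromov--Hausdorff distance with controlled measure defect. Running this at a geometric sequence of scales $r_j \to 0$ around $p$ and using the compactness of bounded subsets of $\textnormal{biLip}(K)^*$ modulo rescaling, one threads the scale-by-scale matchings together to produce a single Lipschitz map $\varphi : A \subset \mathbb{R}^n \to F$ whose image captures a definite fraction of the measure of $F$ in a neighbourhood of $p$. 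A Vitali-style exhaustion then covers $\mathcal{H}^n$-a.e.\ point of $F$ by countably many such images, and summing over the initial Borel decomposition yields $n$-rectifiability of $E$.

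The main obstacle is the second step: promoting multi-scale tangent approximations into a single globally defined Lipschitz map into $F$. In the Euclidean setting this is a Reifenberg-style tilting argument using affine approximations and good flatness control, but in the metric setting there is no ambient linear structure, so the Euclidean domain of the parametrisation must be produced abstractly by coherently threading together the compact correspondences coming from Theorem \ref{theorem:largesubsets}. Making this patching quantitative enough that the limit map is genuinely biLipschitz on a definite portion of its domain, rather than merely a Gromov--Hausdorff approximation, is the real technical content and where \cite{bate} does its heavy lifting.
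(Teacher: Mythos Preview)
The paper does not prove Theorem \ref{chips?}: it is quoted verbatim from \cite[Theorem 1.2]{bate} and used as a black box to deduce Theorem \ref{lovelytheorem}. There is therefore no in-paper proof to compare your proposal against.

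Your sketch of the forward direction via Kirchheim's metric differentiation is the right idea and matches how the paper (and \cite{bate}) handle it, but one citation is wrong: positivity of the lower density does not follow from \ref{theorem:densitypart1}, which only bounds the \emph{upper} density. Lower density positivity for $n$-rectifiable sets in metric spaces is itself a consequence of Kirchheim's structure theorem (the tangent being a genuine normed $\mathbb R^n$ forces density $1$), not of the elementary density bounds.

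For the reverse direction, you correctly identify the reduction steps (uniformising $K_x$ via Lemma \ref{lemma:borel}, passing to an Ahlfors-regular compact piece) and you are honest that the core step---threading multi-scale $d_*$-approximations into a single Lipschitz chart---is where the real work lies. But your description of that step as a ``Reifenberg-style tilting argument'' patched via Theorem \ref{theorem:largesubsets} is not how \cite{bate} actually proceeds, and as stated your plan has a genuine gap: Gromov--Hausdorff closeness at each scale, even with measure control, does not by itself produce coherent maps between consecutive scales, so there is nothing to iterate. Bate's argument instead goes through a characterisation of purely unrectifiable sets (via perturbations of Lipschitz maps to $\mathbb R^n$) and shows such sets cannot have uniformly biLipschitz tangents; this is structurally quite different from building charts directly. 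Since the present paper simply cites that result, the appropriate move here is to do the same rather than attempt an independent proof.
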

\begin{proof}[Proof of Theorem \ref{lovelytheorem}]
	We have that $\text{biLip}(K)^*$ is a subset of $\mathcal{C}^*$ so simply combine Theorem \ref{chips?} and Lemma \ref{ultimate}.
\end{proof}

%\begin{figure}
%	\centering
%	\def\svgwidth{1\textwidth}
%	\import{}{Ultimatecomic.pdf_tex}
%\end{figure}
%
%\begin{figure}
%	\centering
%	\def\svgwidth{1\textwidth}
%	\import{}{Ultimatecomic2.pdf_tex}
%\end{figure}

	\bibliographystyle{plain}
	\bibliography{biblio}

	\end{document}